\newtheorem{thm}{Theorem}
\newtheorem{lem}{Lemma}
\newtheorem{rmk}{Remark}
\newtheorem{assu}{Assumption}
\numberwithin{equation}{section}
\def\beginn{\begin{eqnarray*}}
\def\endn{\end{eqnarray*}}
\def\beginy{\begin{eqnarray}}
\def\endy{\end{eqnarray}}
\def\begine{\begin{enumerate}}
\def\ende{\end{enumerate}}
\def\be{\begin{equation}}
\def\ee{\end{equation}}
\def\bea{\begin{eqnarray}}
\def\eea{\end{eqnarray}}
\def \im {{\rm Im}}
\newcommand{\non}{\nonumber \\}
\newcommand{\bbX}{{\bf X}}
\newcommand{\bbx}{{\bf x}}
\newcommand{\bby}{{\bf y}}
\newcommand{\bbZ}{{\bf Z}}
\newcommand{\bbz}{{\bf z}}
\newcommand{\bbw}{{\bf w}}
\newcommand{\bbA}{{\bf A}}
\newcommand{\bbB}{{\bf B}}
\newcommand{\bbC}{{\bf C}}
\newcommand{\bbD}{{\bf D}}
\newcommand{\bbe}{{\bf e}}
\newcommand{\bbH}{{\bf H}}
\newcommand{\bbh}{{\bf h}}
\newcommand{\bbI}{{\bf I}}
\newcommand{\bbQ}{{\bf Q}}
\newcommand{\bbr}{{\bf r}}
\newcommand{\bbs}{{\bf s}}
\newcommand{\bbS}{{\bf S}}
\newcommand{\bbT}{{\bf T}}
\newcommand{\bbu}{{\bf u}}
\newcommand{\bbV}{{\bf V}}
\newcommand{\bbv}{{\bf v}}
\newcommand{\bbW}{{\bf W}}
\begin{document}

\title{Independence Test for High Dimensional Random Vectors}
\author{G. Pan, J. Gao\footnote{{\rm Correspondence}: Jiti Gao, Department of Econometrics and Business Statistics, Monash University, Caulfield East Victoria 3145, Australia. Email: jiti.gao@monash.edu.}, Y. Yang and M. Guo}

\maketitle

\begin{abstract}
This paper proposes a new mutual independence test for a large number of high dimensional random vectors. The test statistic is based on the characteristic function of the empirical spectral distribution of the sample covariance matrix. 
 The asymptotic distributions of the test statistic under the null and local alternative hypotheses are established as dimensionality and the sample size of the data are comparable. We apply this test to examine multiple MA(1) and AR(1) models, panel data models with some spatial cross-sectional structures.
  In addition, in a flexible applied fashion, the proposed test can capture some dependent but uncorrelated structures, for example, nonlinear MA(1) models, multiple ARCH(1) models and vandermonde matrices.
 Simulation results are provided for detecting these dependent structures. An empirical study of dependence between closed stock prices of several companies from New York Stock Exchange (NYSE) demonstrates that the feature of cross--sectional dependence is popular in stock markets. \\

\textbf{Keywords:} Independence test, cross--sectional dependence, empirical spectral distribution, characteristic function, Marcenko-Pastur Law.
\end{abstract}

\section{Introduction}
A prominent feature of data collection nowadays is that the number of variables is comparable with the sample size. This is the opposite of the classical situations where many observations are made on low-dimensional data. Specific examples of such high dimensional data include microarray expression, images, multiuser detection, climate data and financial data (see, for example, \cite{Do2000}, \cite{Fan2011}, \cite{Johnstone2001}, \cite{Johnstone2009}). This type of data trend poses great challenges because traditional multivariate approaches do not necessarily work, which were established for the case of the sample size $n$ tending to infinity and the dimension $p$ remaining fixed (See \cite{Ader1984}). There have been a substantial set of research works dealing with high dimensional data (see, for example, \cite{BS1996}, \cite{Fan2010}, \cite{Huang2008}, \cite{Fan2001}). Measuring mutual dependence is important in time series analysis and cross-sectional panel data analysis. While serial dependence can be characterized by the general spectral density function (see \cite{Hong1998}; \cite{Hong1999}), mutual dependence is difficult to be described by a single criteria. This paper proposes a new statistic to test mutual dependence for a large number of high dimensional random vectors, including multiple time series and cross-sectional panel data.
\medskip

Suppose that $\{X_{ji}, j=1,\ldots,n; i=1,\ldots,p\}$ are real--valued random variables. For $1\leq i\leq p$, let $\bbx_i=(X_{1i},\cdots,X_{ni})^{T}$ denote the $i$--th time series and $\bbx_1,\cdots,\bbx_p$ be a panel of $p$ time series, where $n$ usually denotes the sample size in each of the time series data. In both theory and practice, it is not uncommon to assume that each of the time series $(X_{1i}, X_{2i}, \cdots, X_{ni})$ is statistically {\color{red} {independent}}, but it may be unrealistic to assume that $\bbx_1, \bbx_2, \cdots, \bbx_p$ are independent or even uncorrelated. This is because there is no natural ordering for cross--sectional indices. There are such cases in various disciplines. In economics and finance, for example, it is not unreasonable to expect that there is significant evidence of cross--sectional dependence in output innovations across $p$ countries and regions in the world. In the field of climatology, there is also some evidence to show that climatic variables in different stations may be cross--sectionally dependent and the level of cross--sectional dependence may be determined by some kind of physical distance. Moreover, one would expect that climatic variables, such as temperature and rainfall variables, in a station in Australia have higher--level dependence with the same type of climatic variables in a station in New Zealand than those in the United States.
\medskip

In such situations, it may be necessary to test whether $\bbx_1, \bbx_2, \cdots, \bbx_p$ are independent before a statistical model is used to model such data. In the econometrics and statistics literature, several papers have basically considered testing for cross--sectional uncorrelatedness for the residuals involved in some specific regression models. Such studies include \cite{pesaran2004} for the parametric linear model case, \cite{HPP2007} for the parametric nonlinear case, and \cite{CGL2011} for the nonparametric nonlinear case. Other related papers include \cite{SU2009} for testing conditional uncorrelatedness through examining a covariance matrix in the case where $p$ is fixed. As the main motivation of this paper, we will propose using an empirical spectral distribution function based test statistic for cross--sectional independence of $\bbx_1, \bbx_2, \cdots, \bbx_p$.
\medskip

The aim is to test
\begin{equation}\label{a1}
\textbf{H}_0: \bbx_1,\cdots,\bbx_p \  are \ independent; \ \text{against} \ \textbf{H}_1: \bbx_1,\cdots,\bbx_p \ are \ not \ independent,
\end{equation}
where $\bbx_i=(X_{1i},\ldots,X_{ni})^{T}$ for $i=1,\ldots,p$.
\medskip

In time series analysis, mutual independence test for multiple time series has long been of interest. Moreover, time series always display various kinds of dependence. For example, an autoregressive conditional heteroscedastic (ARCH(1)) model involves a martingale difference sequence (MDS); a nonlinear moving average (MA) model is not a MDS, but its autocorrelations are zero; a linear moving average (MA) model and an autoregressive (AR) model are both models with correlated structures. In this paper, we also employ the proposed statistic to test dependence for multiple time series.
\medskip

Section $8.5$ of \cite{Ader1984} also consider a similar problem but with fixed dimensions. His problem and approach are as follows. Let the $pm$-component vector $\bbx$ be distributed according to $N(\boldsymbol\mu,\boldsymbol\Sigma)$. Partition $\bbx$ into $m$ subvectors with $p$ components respectively, that is, $\left(\bbh_1^T, \cdots, \bbh_p^T\right)^T$. The vector of means $\boldsymbol{\mu}$ and the covariance matrix $\boldsymbol\Sigma$ can be partitioned respectively, $\left(\boldsymbol{\mu}_1^T, \cdots, \boldsymbol{\mu}_p^T\right)^T$, and
\begin{eqnarray}
\boldsymbol\Sigma=\left(
                    \begin{array}{cccc}
                      \boldsymbol\Sigma_{11} & \boldsymbol\Sigma_{12} & \cdots & \boldsymbol\Sigma_{1p} \\
                      \boldsymbol\Sigma_{21} & \boldsymbol\Sigma_{22} & \cdots & \boldsymbol\Sigma_{2p} \\
                      \vdots & \vdots & \ddots & \vdots \\
                      \boldsymbol\Sigma_{p1} & \boldsymbol\Sigma_{p2} & \cdots & \boldsymbol\Sigma_{pp} \\
                    \end{array}
                  \right).
\end{eqnarray}

The null hypothesis to be tested is that the subvectors $\bbh_1,\bbh_2,\ldots,\bbh_p$ are mutually independently distributed. If $\bbh_1,\bbh_2,\ldots,\bbh_p$ are independent subvectors,
\begin{eqnarray}
E(\bbx_i-\boldsymbol\mu_i)(\bbx_j-\boldsymbol\mu_j)^{'}=\boldsymbol\Sigma_{ij}=\textbf{0}, \ \forall i\neq j.
\end{eqnarray}

Thus, the null hypothesis is equivalent to testing $\bbH_0: \boldsymbol\Sigma_{ij}=\textbf{0}, \ \forall i\neq j$. This can be stated alternatively as the hypothesis that $\boldsymbol\Sigma$ is of the form
\begin{eqnarray}
\boldsymbol\Sigma_0=\left(
                    \begin{array}{cccc}
                      \boldsymbol\Sigma_{11} & \textbf{0} & \cdots & \textbf{0} \\
                      \textbf{0} & \boldsymbol\Sigma_{22} & \cdots & \textbf{0} \\
                      \vdots & \vdots & \ddots & \vdots \\
                      \textbf{0} & \textbf{0} & \cdots & \boldsymbol\Sigma_{pp} \\
                    \end{array}
                  \right).
\end{eqnarray}

To tackle the problem above, draw $n$ observations from the population $\bbx$ to form a sample covariance matrix. The likelihood ratio criterion proposed is
\begin{eqnarray}
\lambda=\frac{|\bbQ|^{\frac{1}{2}n}}{\prod^{p}_{i=1}|\bbQ_{ii}|^{\frac{1}{2}n}},
\end{eqnarray}
where
\begin{eqnarray}
\bbQ=\left(
       \begin{array}{cccc}
         \bbQ_{11} & \bbQ_{12} & \cdots & \bbQ_{1p} \\
         \bbQ_{21} & \bbQ_{22} & \cdots & \bbQ_{2p} \\
         \vdots & \vdots &  & \vdots \\
         \bbQ_{p1} & \bbQ_{p2} & \cdots & \bbQ_{pp} \\
       \end{array}
     \right),
\end{eqnarray}
in which $\bbQ_{ik}$ is the sample covariance matrix of the random vector $\bbh_{i}$ and $\bbh_k$.
\medskip

Our approach essentially uses the characteristic function of the empirical spectral distribution of sample covariance matrices in large random matrix theory.
When $\bbx_1, \cdots, \bbx_p$ are mutual independent, the limiting spectral distribution (LSD) of the corresponding sample covariance matrix is the Marcenko-Pastur (M-P) law (see, for example, \cite{MP1967}, \cite{BS2009}). From this point, any deviation of the LSD from M-P law is evidence of dependence. Indeed, \cite{S1995} and \cite{BZ2008} report the LSD of the sample covariance matrix with correlations in columns and it is different from the M-P law. In dependent but uncorrelated cases, the LSD may also be the M-P law, e.g. ARCH(1) model. However, the proposed test can still capture the dependence by utilizing the nonzero correlation between high-order series. 
Unlike the Anderson's test, we need not re--draw observations from the set of vectors of $\bbx_1,\cdots,\bbx_p$ due to the high dimensionality.
\medskip

The rest of the paper is organized as follows. Section $2$ introduces the proposed test statistic and some related large dimensional random matrix results. 
We present the asymptotic distributions of the test statistic under both the null and local alternative hypotheses in Section $3$. Moreover, its applications to some panel data models with spatial dependent structures are also illustrated. Section $4$ studies a general panel data model whose dependent structure is different from that investigated in Section $3$ and develops the asymptotic distribution of the proposed statistic under the null hypothesis for this model. Several simulated results which demonstrate the effectiveness of the proposed test under the two dependent structures are shown in Section $5$. Furthermore, this section illustrates simulation results for some dependent but uncorrelated structures, though we have not developed asymptotic theory for these kinds of models. An empirical analysis of daily closed stock prices from NYSE is provided in Section $6$. We conclude with a discussion in Section $7$. All mathematical proofs are relegated to an appendix available from a supplementary document. The full version of this paper is available as a working paper at http://www.jitigao.com.

\section{Theory and Methodology}

Before we establish the main theory and methodology, we first make the following assumptions:

\begin{assu}\label{assu1}
For each $i=1,\ldots,p$, $Y_{1i},\cdots,Y_{ni}$ are independent and identically distributed (i.i.d) random variables with mean zero, variance one and finite fourth moment. When $Y_{ji}$'s are complex random variables we require $EY_{ji}^2=0$. Let $\bbx_i=\bbT_n^{1/2}\bby_i$ with $\bby_i=(Y_{1i},\cdots,Y_{ni})^T$ and $\bbT_n^{1/2}$ being a Hermitian square root of the nonnegative definite Hermitian matrix $\bbT_n$.
\end{assu}
\begin{assu}\label{assu3}
$p=p(n)$ with $p/n\rightarrow c\in(0,\infty)$.
\end{assu}

We stack $p$ time series $\bbx_i$ one by one to form a data matrix
$\bbX=\left(\bbx_1,\cdots,\bbx_p\right)$.
Moreover denote the sample covariance matrix by
$$\textbf{A}_n=\frac{1}{n}\bbX^{*}\bbX$$ where $\bbX^{*}$ is the Hermitian transform of the matrix $\bbX$.
The empirical spectral distribution (ESD) of the sample covariance matrix $\textbf{A}_n$ is defined as
\begin{equation}
F^{\bbA_n}(x)=\frac{1}{p}\sum^{p}_{i=1}I(\lambda_i\leq x),
\end{equation}
where $\lambda_1\leq \lambda_2\leq \ldots \leq \lambda_p$ are eigenvalues of $\textbf{A}_n$.

It is well-known that if $\bbx_1,\cdots,\bbx_p$ are independent and $p/n\rightarrow c\in(0,\infty)$ then  $F^{\bbA_n}(x) $ converges with probability one to the Marcenko-Pastur Law $F^c(x)$ ( see Marcenko-Pastur (1967) and \cite{{BS2009}}) whose density has an explicit expression
\begin{equation}\label{a2}
f_{c}(x)=\left\{
\begin{array}{r@{\; \;}l}
\frac{1}{2\pi xc}\sqrt{(b-x)(x-a)}, \quad & a\leq x\leq b; \\
0,                                      \quad     & otherwise; \\
\end{array}
\right.
\end{equation}
and a point mass $1-1/c$ at the origin if $c>1$, where $a=(1-\sqrt{c})^2$ and $b=(1+\sqrt{c})^2$.
\medskip

When there is some correlation among $\bbx_1,\cdots,\bbx_p$, denote by $\bbT_p$ the covariance matrix of the first row, $\bby_1^T$, of $\bbX$. Then, under Assumption 1, when $F^{\bbT_p}(x)\stackrel{D}\longrightarrow H(x)$, $F^{\textbf{A}_n}$ converges with probability one to a non random distribution function $F^{c,H}$ whose Stieltjes transform satisfies (see \cite{S1995})
\begin{equation}\label{eq2}
m(z)=\int \frac{1}{x(1-c-czm(z))-z}dH(x),
\end{equation}
where the Stieltjes transform $m_G$ for any c.d.f $G$ is defined as
\begin{equation}
m_G(z)=\int \frac{1}{\lambda-z}dG(\lambda), \ \ \im z> 0
\end{equation}
and $G$ can be recovered by the inversion formula
$$
      G\{[x_1,x_2]\}=\frac{1}{\pi}\lim\limits_{\varepsilon\rightarrow
      0+}\int_{x_1}^{x_2} {\mathcal{I}}m (m_G(x+i\varepsilon)) dx,
$$
where $x_1$ and $x_2$ are continuity points of $G$.
\medskip

Moreover, equation (\ref{eq2}) takes a simpler form when $F^{c,H}$ is replaced by
\begin{equation}
\underline{F}^{c,H}=(1-c)I_{[0,\infty]}+cF^{c,H},
\end{equation}
which is the limiting ESD of $\underline{\textbf{A}}_n=\frac{1}{n}\bbX\bbX^{*}$. Its Stieltjes transform
\begin{equation}\label{stieljes}
\underline{m}(z)=-\frac{1-c}{z}+cm(z)
\end{equation}
has an inverse
\begin{equation}
z=z(\underline{m})=\frac{1}{\underline{m}}+c\int\frac{x}{1+x \underline{m}}dH(x).
\end{equation}

The construction of our test statistic relies on the following observation: the limit of the ESD of the sample covariance matrix $\bbA_n$ is the M-P law by (\ref{a2}) when $\bbx_1,\cdots,\bbx_p$ are independent and satisfy Assumption 1, while the limit of the ESD is determined from (\ref{eq2}) when there is some correlation among $\bbx_1,\cdots,\bbx_p$ with the covariance matrix $\bbT_p$ different from the identity matrix. Moreover, preliminary investigations indicate that when $\bbx_1,\cdots,\bbx_p$ are only uncorrelated (without any further assumptions), the limit of the ESD of $\bbA_n$ is not the M-P law (see \cite{RD2009}). These therefore motivate us to employ the ESD of $\bbA_n$, $F^{\bbA_n}(x)$, as a test statistic. There is no central limit theorem for $(F^{\bbA_n}(x)-F^{c,H}(x))$, however, as argued by \cite{BS2004}. We instead consider the characteristic function of $F^{\bbA_n}(x)$.
\medskip

The characteristic function of $F^{\bbA_n}(x)$ is
\begin{equation}
s_n(t)\triangleq\int e^{itx}dF^{\bbA_n}(x)=\frac{1}{p}\sum\limits_{i=1}^pe^{it\lambda_i},
\end{equation}
where $\lambda_i, i=1,\ldots,p$ are eigenvalues of the sample covariance matrix $\bbA_n$.
\medskip

Our test statistic is then proposed as follows:
\begin{equation}\label{a6}
M_n=\int^{T_2}_{T_1}|s_n(t)-s(t)|^2dU(t),
\end{equation}
where $s(t):=s(t,c_n)$ is the characteristic function of $F^{c_n}(x)$, obtained from the M-P law $F^{c}(x)$ with $c$ replaced by $c_n=p/n$, and $U(t)$ is a distributional function with its support on a compact interval, say $[T_1,T_2]$.

To develop the asymptotic distribution of $M_n$ under a local alternative, the following assumption is needed.

\begin{assu}\label{assu2}
Let $\textbf{T}_p$ be a $p\times p$ random Hermitian nonnegative definite matrix with a bounded spectral norm. Let $\bby_j^T=\bbw_j^T\bbT_p^{1/2}$, where $\bbT_p^{1/2}$ is the $p\times p$ Hermitian matrix that satisfies $(\bbT_p^{1/2})^{2}=\bbT_p$ and $\bbw_j^T=(W_{j1},\cdots,W_{jp}), j=1,\ldots,n$ are i.i.d random vectors, in which $W_{ji}, j\leq n, i\leq p$ are i.i.d with mean zero, variance one and finite fourth moment.

The empirical spectral distribution $F^{\textbf{T}_p}$ of $\textbf{T}_p$ converges weakly to a distribution $H$ on $[0,\infty)$ as $n\rightarrow \infty$;  all the diagonal elements of the matrix $\bbT_p$ are equal to $1$.
\end{assu}

Note that under Assumption \ref{assu2}, $\bbA_n$ becomes $\bbT_p^{1/2}\bbW^*\bbW\bbT_p^{1/2}$, where $\bbW=(\bbw_1,\cdots,\bbw_n)^T$. The assumption that all the diagonal elements of $\bbT_p$ are equal to $1$ is used to guarantee that $EX_{ji}^2=1$. Roughly speaking, this constraint is to ensure that the data are weakly stationary.  However, we would like to remark that such a constraint could be removed if we knew that $\bbT_p$ was not a diagonal matrix.
 Under Assumption \ref{assu2}, when $\bbT_p=\bbI_p$, the random vectors $\bbx_1,\ldots,\bbx_p$ are independent and when $\bbT_p\neq\bbI_p$, they are not independent. For convenience, we name this dependent structure as `linear dependent structure'.
\medskip

To develop the asymptotic distribution of the test statistic, we introduce
\begin{equation}
G_n(x)=p[F^{\textbf{A}_n}(x)-F^{c_n}(x)].
\end{equation}

Then, $p(s_n(t)-s(t))$ can be decomposed as a sum of the random part and the non-random part as follows:
\bea
&& p(s_n(t)-s(t)) = \int e^{itx}dG_n(x)
\nonumber \\
&&= \ \int e^{itx}d\left(p\left[F^{\textbf{A}_n}(x)-F^{c_n,H_n}(x)\right]\right) +\int e^{itx}d\left(p\left[F^{c_n,H_n}(x)-F^{c_n}(x)\right]\right),\label{a3}
\eea
where $F^{c_n,H_n}$ is obtained from $F^{c,H}$ with $c$ and $H$ replaced by $c_n=p/n$ and $H_n=F^{\bbT_p}$.
\medskip

\begin{thm}\label{thm2}
Let Assumptions \ref{assu1} and \ref{assu3} hold.

1) Suppose that Assumptions \ref{assu2}  and the following conditions are satisfied:
\begin{equation}\label{a5}
\frac{1}{n}\sum^{n}_{i=1}\bbe^{*}_i\textbf{T}^{1/2}_p(\underline{m}(z_1)\textbf{T}_p+\textbf{I})^{-1}\textbf{T}_p^{1/2}\bbe_i\bbe^{*}_i\textbf{T}_p^{1/2}(\underline{m}(z_2)\textbf{T}_p+\textbf{I})^{-1}\textbf{T}^{1/2}_p\bbe_i\rightarrow h_1(z_1,z_2)
\end{equation}
and
\begin{equation}\label{a4}
\frac{1}{n}\sum^{n}_{i=1}\bbe^{*}_i\textbf{T}^{1/2}_p(\underline{m}(z)\textbf{T}_p+\textbf{I})^{-1}\textbf{T}_p^{1/2}\bbe_i\bbe^{*}_i\textbf{T}_p^{1/2}(\underline{m}(z)\textbf{T}_p+\textbf{I})^{-2}\textbf{T}^{1/2}_p\bbe_i\rightarrow h_2(z),
\end{equation}
where $\bbe^{*}_i$ is the $n$-dimensional row vector with the $i$-th element being $1$ and others $0$.

Then, the scaled proposed test statistic $p^2 M_n$ converges in distribution to a random variable $R$ of the form:
\begin{equation}\label{R}
R=\int^{T_2}_{T_1}(|V(t)+\delta_1(t)|^{2}+| Z(t)+\delta_2(t)|^{2})dU(t),
\end{equation}
where $(V(t), Z(t))$ is a vector of Gaussian processes and $\delta_j(t),\ j=1,2$ are defined as
\begin{eqnarray}
\delta_1(t)=\lim_{n\rightarrow\infty}\int cos(tx)dp\big(F^{c_n,H_n}(x)-F^{c_n}(x)\big),
\end{eqnarray}
\begin{eqnarray}
\delta_2(t)=\lim_{n\rightarrow\infty}\int sin(tx)dp\big(F^{c_n,H_n}(x)-F^{c_n}(x)\big).
\end{eqnarray}

The mean and variance of $(V(t), Z(t))$ are specified as follows.
If $X_{11}$ is real, then
\begin{eqnarray*}
EV(t)&=&\frac{1}{2\pi i}\oint_{\gamma} cos(tz)\frac{c\int \underline{m}^3(z)\tau^2(1+\tau\underline{m}(z))^{-3}dH(\tau)}{(1-c\int \underline{m}^2(z)\tau^2(1+\tau\underline{m}(z))^{-2}dH(\tau))^2}dz\\
&&-\frac{EX_{11}^4-3}{2\pi i}\oint_{\gamma} cos(t_jz)\frac{c\underline{m}^3(z)h_2(z)}{1-c\int \underline{m}^2(z)\tau^2(1+\tau\underline{m}(z))^{-2}dH(\tau)}dz.
\end{eqnarray*}

Replacing $cos(tz)$ in $EV(t)$ by $sin(tjz)$ yields the expression of $EZ(t)$.
Also
\begin{eqnarray*}
&& Cov\Big(V(t_j),Z(t_h)\Big) = -\frac{1}{2\pi^2}\oint_{\gamma_1}\oint_{\gamma_2} \frac{cos(t_jz_1)sin(t_hz_2)}{(\underline{m}(z_1)-\underline{m}(z_2))^2}\frac{d}{dz_2}\underline{m}(z_2)\frac{d}{dz_1}\underline{m}(z_1)dz_1dz_2\\
&&-\frac{c(EX_{11}^4-3)}{4\pi^2}\oint_{\gamma_1}\oint_{\gamma_2} cos(t_jz_1)sin(t_hz_2)\frac{d^2}{dz_1dz_2}[\underline{m}(z_1)\underline{m}(z_2)h_1(z_1,z_2)]dz_1dz_2.
\end{eqnarray*}

If $X_{11}$ is complex with $EX_{11}^2=0$, then
\begin{eqnarray*}
EV(t)=-\frac{E|X_{11}|^4-2}{2\pi i}\oint_{\gamma} cos(t z)\frac{c\underline{m}^3(z)h_2(z)}{1-c\int \underline{m}^2(z)\tau^2dH(\tau)/(1+\tau\underline{m}(z))^2}dz,
\end{eqnarray*}
and the covariance
\begin{eqnarray*}
&& Cov\Big(V(t_j),Z(t_h)\Big) =-\frac{1}{4\pi^2}\oint_{\gamma_1}\oint_{\gamma_2}
\frac{cos(t_jz_1)sin(t_hz_2)}{(\underline{m}(z_1)-\underline{m}(z_2))^2}\frac{d}{dz_2}\underline{m}(z_2)\frac{d}{dz_1}\underline{m}(z_1)dz_1dz_2\\
&&-\frac{c(E|X_{11}|^4-2)}{4\pi^2}\oint_{\gamma_1}\oint_{\gamma_2} cos(t_jz_1)sin(t_hz_2)\frac{d^2}{dz_1dz_2}[\underline{m}(z_1)\underline{m}(z_2)h_1(z_1,z_2)]dz_1dz_2.
\end{eqnarray*}

The covariances $Cov(V_j,V_h)$ and $Cov(Z_j, Z_h)$ are similar to $Cov(V_j, Z_h)$ except replacing $(cos(t_jz),sin(t_hz))$ by $(cos(t_jz), cos(t_hz))$ and $(sin(t_jz), sin(t_hz))$ respectively.

The contours $\gamma$, $\gamma_1$ and $\gamma_2$ above are all closed and are taken in the positive direction in the complex plane, each enclosing the support of $F^{c,H}$. Also $\gamma_1$ and $\gamma_2$ are disjoint.

2) Under the null hypothesis $\bbH_0$, the scaled statistic $p^2M_n$ then converges in distribution to
\begin{eqnarray}\label{R0}
R_0=\int^{T_2}_{T_1}(|\tilde V(t)|^{2}+|\tilde Z(t)|^{2})dU(t),
\end{eqnarray}
where the distribution of $(\tilde V(t), \tilde Z(t))$ can be obtained from $( V(t), Z(t))$ with $H(\tau)$ becoming the degenerate distribution at the point $1$, $\underline{m}(z)$ being the
Stieltjes transform of the M-P law, $h_1(z_1,z_2)=\frac{1}{(\underline{m}(z_1)+1)(\underline{m}(z_2)+1)}$ and $h_2(z)=\frac{1}{(\underline{m}(z)+1)^3}$. 

\end{thm}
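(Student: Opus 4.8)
The plan is to express $p^{2}M_{n}$ in terms of the random field $G_{n}(x)=p[F^{\bbA_{n}}(x)-F^{c_{n}}(x)]$ and reduce the problem to a functional central limit theorem for $G_{n}$, from which the stated limit $R$ then follows by the continuous mapping theorem. Using the decomposition \eqref{a3}, write $p(s_{n}(t)-s(t))=\int e^{itx}dG^{\circ}_{n}(x)+\int e^{itx}d\big(p[F^{c_{n},H_{n}}-F^{c_{n}}]\big)$, where $G^{\circ}_{n}(x)=p[F^{\bbA_{n}}(x)-F^{c_{n},H_{n}}(x)]$ is the purely random part and the second term is deterministic. Separating real and imaginary parts, $p(s_{n}(t)-s(t))=\big(\int\cos(tx)dG^{\circ}_{n}+\delta_{1,n}(t)\big)+i\big(\int\sin(tx)dG^{\circ}_{n}+\delta_{2,n}(t)\big)$, so that
\begin{equation*}
p^{2}M_{n}=\int_{T_{1}}^{T_{2}}\Big(\big|V_{n}(t)+\delta_{1,n}(t)\big|^{2}+\big|Z_{n}(t)+\delta_{2,n}(t)\big|^{2}\Big)dU(t),
\end{equation*}
with $V_{n}(t)=\int\cos(tx)dG^{\circ}_{n}(x)$ and $Z_{n}(t)=\int\sin(tx)dG^{\circ}_{n}(x)$. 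Thus everything hinges on showing $(V_{n},Z_{n})\Rightarrow(V,Z)$ in $C[T_{1},T_{2}]^{2}$ and $\delta_{j,n}(t)\to\delta_{j}(t)$.

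The core input is the CLT for linear spectral statistics of sample covariance matrices of the form $\bbT_{p}^{1/2}\bbW^{*}\bbW\bbT_{p}^{1/2}$ under Assumption \ref{assu2}: for analytic test functions $f$, the process $\int f\,dG^{\circ}_{n}$ converges to a Gaussian limit whose mean and covariance are given by the contour-integral formulas of Bai–Silverstein. I would first establish this for the analytic functions $z\mapsto e^{itz}$ (or equivalently $\cos(tz)$, $\sin(tz)$) by the standard route: (i) write $\int f\,dG^{\circ}_{n}=-\frac{1}{2\pi i}\oint_{\gamma}f(z)\,p[m_{n}(z)-m^{0}_{n}(z)]\,dz$ over a contour $\gamma$ enclosing the support of $F^{c,H}$, where $m_{n}$ is the Stieltjes transform of $F^{\bbA_{n}}$ and $m^{0}_{n}$ that of $F^{c_{n},H_{n}}$; (ii) prove tightness and finite-dimensional convergence of the random field $M_{n}(z):=p[m_{n}(z)-m^{0}_{n}(z)]$ on $\gamma$ — this is exactly where Assumption \ref{assu2} and the extra conditions \eqref{a5}–\eqref{a4} enter, since $h_{1}(z_{1},z_{2})$ and $h_{2}(z)$ are the limits of the quadratic forms $\frac{1}{n}\sum_{i}\bbe_{i}^{*}(\cdots)\bbe_{i}$ that appear in the covariance of $M_{n}$ and in the fourth-moment correction term; (iii) push the limit through the contour integral and separate real/imaginary parts to read off $EV(t)$, $EZ(t)$ and the covariance kernels, recovering the displayed formulas — the $\frac{1}{2\pi i}\oint\cos(tz)(\cdots)dz$ expressions come directly from substituting $f(z)=\cos(tz)$ into the Bai–Silverstein mean/covariance functionals, and the two cases (real $X_{11}$, complex with $EX_{11}^{2}=0$) are exactly the two regimes of that CLT. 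The joint limit $(V_{n},Z_{n})$ is Gaussian because it is a continuous linear image of the Gaussian limit of $M_{n}(\cdot)$.

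For the deterministic part, $\delta_{j,n}(t)$ is a fixed (non-random) integral against $p[F^{c_{n},H_{n}}-F^{c_{n}}]$, and its convergence to $\delta_{j}(t)$ follows from the convergence rate of $F^{c_{n},H_{n}}$ to $F^{c_{n}}$ as $H_{n}\to H$; I would handle this via the same contour representation applied to the deterministic Stieltjes transforms, i.e. $\delta_{1,n}(t)+i\delta_{2,n}(t)=-\frac{1}{2\pi i}\oint_{\gamma}e^{itz}\,p[m^{0}_{n}(z)-m_{F^{c_{n}}}(z)]\,dz$, and a perturbation analysis of the Marčenko–Pastur fixed-point equation \eqref{eq2} around $H_{n}\equiv\delta_{1}$. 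Once $(V_{n}+\delta_{1,n},\,Z_{n}+\delta_{2,n})\Rightarrow(V+\delta_{1},\,Z+\delta_{2})$ in $C[T_{1},T_{2}]^{2}$, the map $(\phi,\psi)\mapsto\int_{T_{1}}^{T_{2}}(|\phi|^{2}+|\psi|^{2})dU$ is continuous, so $p^{2}M_{n}\Rightarrow R$, proving part 1). Part 2) is the specialization $H=\delta_{1}$: then $F^{c_{n},H_{n}}=F^{c_{n}}$ so $\delta_{1}=\delta_{2}\equiv0$, $\underline{m}$ is the Stieltjes transform of the (companion) M-P law, and the quadratic forms in \eqref{a5}–\eqref{a4} collapse — since $\bbT_{p}=\bbI_{p}$ makes $\bbe_{i}^{*}\bbT_{p}^{1/2}(\underline{m}\bbT_{p}+\bbI)^{-1}\bbT_{p}^{1/2}\bbe_{i}=(\underline{m}+1)^{-1}$ — giving $h_{1}(z_{1},z_{2})=[(\underline{m}(z_{1})+1)(\underline{m}(z_{2})+1)]^{-1}$ and $h_{2}(z)=(\underline{m}(z)+1)^{-3}$, whence $R$ reduces to $R_{0}$.

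The main obstacle is step (ii): establishing the CLT for $M_{n}(z)=p[m_{n}(z)-m^{0}_{n}(z)]$ with the \emph{correct} covariance structure under the general model of Assumption \ref{assu2}. The subtlety is that $\bbW\bbT_{p}^{1/2}$ does not have i.i.d. entries (the rows are i.i.d. but the within-row coordinates are correlated through $\bbT_{p}^{1/2}$), so one cannot cite the i.i.d.-entry CLT directly; one must re-run the martingale-difference decomposition of $m_{n}(z)-Em_{n}(z)$ together with a careful expansion of $Em_{n}(z)-m^{0}_{n}(z)$, and it is precisely in controlling the quadratic-form fluctuations $\bbe_{i}^{*}\bbT_{p}^{1/2}(\cdots)\bbT_{p}^{1/2}\bbe_{i}$ that conditions \eqref{a5}–\eqref{a4} are needed to make the limiting mean and covariance well-defined. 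The fourth-moment corrections ($EX_{11}^{4}-3$ in the real case, $E|X_{11}|^{4}-2$ in the complex case) arise from the diagonal terms in these quadratic forms and must be tracked throughout. Everything else — tightness on $\gamma$, the contour-integral bookkeeping, the reduction of $\delta_{j,n}$, and the continuous mapping at the end — is routine given this CLT.
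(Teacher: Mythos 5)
Your proposal is correct and follows essentially the same route as the paper: the decomposition (\ref{a3}) into a random and a deterministic part, the contour representation $-\frac{1}{2\pi i}\oint e^{itz}M_n(z)\,dz$, the Bai--Silverstein/Pan--Zhou CLT for linear spectral statistics supplying the finite-dimensional Gaussian limits (with conditions (\ref{a5})--(\ref{a4}) giving $h_1,h_2$ and the fourth-moment corrections), tightness in $t$, and the continuous mapping theorem. The only difference in emphasis is that the paper does not re-run the martingale decomposition for $M_n(z)$ but simply cites this CLT (its Lemma 7, from Theorem 4 of Pan and Zhou (2008)), concentrating its own effort instead on the tightness of $t\mapsto p(s_n(t)-s(t))$ over $[T_1,T_2]$, which it obtains from the complex mean value theorem together with the stochastic boundedness of $\oint_{\mathcal{C}}|M_n(z)|\,|dz|$.
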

\medskip

\begin{rmk}
Assumption \ref{assu1} assumes that all the entries of $\bbx_i$ are identically distributed. It is of practical interest to consider removing the identical distribution condition. Instead of assuming identically distributed entries for $\bbx_i$,
we need only to impose the following additional assumptions: \ for any $k=1,\ldots,p; j=1,\ldots,n$, $EX_{jk}=0$, $EX_{jk}^2=1$, $\sup\limits_{j,k}EX_{jk}^{4}<\infty$ and for any $\eta>0$,
\be
\frac{1}{\eta^4np}\sum_{j=1}^n \sum_{k=1}^{p} E(|X_{jk}|^4I_{(|X_{jk}|\geq \eta \sqrt{n})})\rightarrow 0.
\label{a9}
\ee

A careful checking on the arguments of Theorem $1.1$ of \cite{BS2004} and Theorem $1.4$ of \cite{PZ2008} indicates that Lemma 6 (of Appendix A of the supplementary material) still holds and hence Theorem \ref{thm2} holds under (\ref{a9}). For the expressions of the mean and covariance of the asymptotic random vector, we substitute the fourth moment $E|W_{11}|^4$ in Lemma 6 with the average of all the fourth moments of all the entries, i.e. $\frac{\sum^{n,p}_{j=1,k=1}E|W_{jk}|^4}{np}$.
\end{rmk}
\medskip

Conditions (\ref{a5}) and (\ref{a4}) can be removed if $E\left[W_{11}^4\right]=3$ in the real--number case or $E\left[W_{11}^2\right]=2$ in the complex--number case (see, for example, \cite{BS2004}). The second part of the above theorem is concerned with asymptotic distributions of the test statistic under a local alternative hypothesis, i.e., Assumption \ref{assu2}. With respect to Assumption \ref{assu2}, we would make the following comments, which are useful in the subsequent application section.

If $\bby_j^{T}=\tilde{\bbw}_j^{T}\bbC$, where $\bbC$ is any $q\times p$ nonrandom matrix and $\tilde{\bbw}_j, j=1,\ldots,n$ are i.i.d. $q\times 1$ random vectors with their respective entries being i.i.d random variables, then Theorem \ref{thm2} is still applicable. This is because $\frac{1}{n}\bbX\bbX^{*}$ in this case becomes $\frac{1}{n}\tilde{\bbW}\bbC\bbC^{*}\tilde{\bbW}^{*}$ and the nonnegative definitive matrix $\bbC\bbC^{*}$ can be decomposed into $\bbC\bbC^{*}=\bbT_q^{1/2}\bbT_q^{1/2}$, where $\bbT_q^{1/2}$ is a $q\times q$ Hermitian matrix and $\tilde{\bbW}=(\tilde{\bbw}_1,\ldots,\tilde{\bbw}_n)$. Note that the eigenvalues of $\frac{1}{n}\bbX\bbX^{*}$ differ from those of $\frac{1}{n}\bbT_q^{1/2}\bbW^{*}\bbW\bbT_q^{1/2}$ by $|p-q|$ zeros. Thus, we may instead resort to CLT of $\frac{1}{n}\bbT_q^{1/2}\tilde{\bbW}^{*}\tilde{\bbW}\bbT_q^{1/2}$.

\section{Applications in multiple MA(1), AR(1) and spatial cross--sectional dependence structures}

This subsection is to explore some applications of the proposed test.
\medskip

\noindent{\bf Example 3.1}. \
Consider a multiple moving average model of order $1$(MA(1)) of the form:
\begin{equation}\label{eqma1}
\bbv_t=\bbz_t+\psi\bbz_{t-1}, t=1,\ldots,p,
\end{equation}
where $|\psi|<1$; $\bbz_t=(Z_{1t},\ldots,Z_{nt})^T$ is an $n$-dimensional random vector of i.i.d. elements, each of which has zero mean and unit variance; and $\bbv_t=(V_{1t},\ldots,V_{nt})^T$. Denote by $\hat \bbV_j^{T}$ and $\hat \bbZ_j^{T}$ respectively the $j$-th rows of $\bbV=(V_{jt})_{n\times p}$ and $\bbZ=(Z_{jt})_{n\times (p+1)}$.

For each $j=1,\ldots,n$, the MA(1) model (\ref{eqma1}) can be written as
\begin{eqnarray}
\hat\bbV_j^{T}=\hat\bbZ_j^{T}\bbC,
\end{eqnarray}
where
\begin{eqnarray}
\bbC=\left(
       \begin{array}{cccccc}
         \psi & 0 & 0   & \cdots & 0 & 0 \\
         1 & \psi   & 0 & \cdots & 0 & 0 \\
         0 & 1   & \psi   & \cdots & 0 & 0 \\
         \vdots & \vdots & \vdots & \ddots & \vdots & \vdots \\
         0 & 0   & 0   & \cdots & 1 & \psi \\
         0 & 0   & 0   & \cdots & 0 & 1 \\
       \end{array}
     \right)_{(p+1)\times p}.
\end{eqnarray}

From Assumption \ref{assu2}, the last paragraph of the preceding subsection and Theorem \ref{thm2}, our test is able to capture the dependence of $\bbv_1,\cdots,\bbv_p$ as $n$ and $p$ go to infinity in the same order.
\medskip

\noindent{\bf Example 3.2}. \
Consider a multiple autoregressive model of order $1$(AR(1)) of the form:
\begin{eqnarray}\label{eqar1}
\bbv_t=\phi\bbv_{t-1}+\bbz_t, \ t=1,\ldots,p, \ \bbv_0=\frac{1}{\sqrt{1-\phi^2}}\bbz_0,
\end{eqnarray}
where $|\phi|<1$; for any $t=0,1,\ldots,p$, $\bbz_t=(Z_{1t},\ldots,Z_{nt})^T$ is an $n$-dimensional random vector with i.i.d. elements, each of which has zero mean and unit variance; and $\bbv_t=(V_{1t},\ldots,V_{nt})^T$. Denote the $j$-th rows of $\bbV=(V_{jt})_{n\times (p+1)}$ and $\bbZ=(Z_{jt})_{n\times (p+1)}$ as $\hat \bbV_j^{T}$ and $\hat \bbZ_j^{T}$ respectively.

For each $j=1,\ldots,n$, the AR(1) model (\ref{eqar1}) can be written as
\begin{eqnarray}
\hat\bbV_j^{T}=\hat\bbZ_j^{T}\bbD,
\end{eqnarray}
where
\begin{eqnarray}
\bbD=\left(
       \begin{array}{cccccc}
         1 & -\phi & (-\phi)^2 & \cdots & (-\phi)^{p-2} & (-\phi)^{p-1}/\sqrt{1-\phi^2} \\
         0 & 1 & -\phi & \cdots & (-\phi)^{p-3} & (-\phi)^{p-2}/\sqrt{1-\phi^2} \\
         0 & 0 & 1 & \cdots & (-\phi)^{p-4} & (-\phi)^{p-3}/\sqrt{1-\phi^2} \\
         \vdots & \vdots & \vdots & \ddots & \vdots & \vdots \\
         0 & 0 & 0 & \cdots & 1 & -\phi/\sqrt{1-\phi^2} \\
         0 & 0 & 0 & \cdots & 0 & 1/\sqrt{1-\phi^2} \\
       \end{array}
     \right).
\end{eqnarray}

By Theorem \ref{thm2}, we can also apply the proposed test $M_n$ to this AR(1) model as well.
\medskip

\noindent{\bf Example 3.3}. \
We now consider a panel data case. Let $\{v_{ji}: i=1,\ldots,p; j=1,\ldots,n\}$ be the error components in a panel data model. They may be cross--sectionally correlated. In panel data analysis, it is of interest to consider the cross--sectional independence hypothesis, i.e.
\begin{equation*}
\bbH_{00}: \ Cov(v_{ji},v_{jh})=0 \ for\ all\ j=1,\ldots,n\ and\ all\ i\neq h;
\end{equation*}
against
\begin{equation}\label{b4}
\bbH_{11}: \ Cov(v_{ji},v_{jh})\neq 0 \ for\ some\ j\ and\ some \ i\neq h.
\end{equation}
Under the assumption that $\{v_{ji}: i=1,\ldots,p; j=1,\ldots,n\}$ are normal distributed, this hypothesis is equivalent to the independence hypothesis that
\begin{eqnarray}\label{b10}
\bbH_0:\ \bbv_1,\ldots,\bbv_p\ are \ independent; \ \ against\ \ \bbH_1:\ \bbv_1,\ldots,\bbv_p\ are \ not \ independent,
\end{eqnarray}
where $\bbv_i=(v_{1i},\ldots,v_{ni})^{T}$, $i=1,\ldots,p$.

Modern panel data literature has mainly adopted two different approaches to model error cross--sectional dependence: the spatial approach and the factor-structure approach. For the spatial approach, there are three popular spatial models: Spatial Moving Average (SMA), Spatial Auto-Regressive (SAR) and Spatial Error Components (SEC) processes. They are defined respectively as follows:
\begin{equation}\label{b1}
SMA:\ v_{ji}=\sum^{p}_{k=1}\omega_{ik}\varepsilon_{jk}+\varepsilon_{ji},
\end{equation}
\begin{equation}\label{b2}
SAR:\ v_{ji}=\sum^{p}_{k=1}\omega_{ik}v_{jk}+\varepsilon_{ji},
\end{equation}
\begin{equation}\label{b3}
SEC:\ v_{ji}=\sum^{p}_{k=1}\omega_{ik}\xi_{jk}+\varepsilon_{ji},
\end{equation}
where $\omega_{ik}$ is the $i$--specific spatial weight attached to individual $k$; $\{\varepsilon_{ji}:i=1,\ldots,p; j=1,\ldots,n\}$ and $\{\xi_{ji}: i=1,\ldots,p; j=1,\ldots,n\}$ are two sets of i.i.d. random components with zero mean and unit variance, and $\{\xi_{ji}: i=1,\ldots,p; j=1,\ldots,n\}$ are uncorrelated with $\{\varepsilon_{ji}, i=1,\ldots,p; j=1,\ldots,n\}$.

 Denote the $j$-th row of $\bbV=(v_{ji})_{n\times p}$, $\boldsymbol\varepsilon=(\varepsilon_{ji})_{n\times p}$ and
$\boldsymbol\xi=(\xi_{ji})_{n\times p}$ by $\hat\bbv_j^{T}$, $\hat{\boldsymbol\varepsilon}_j^{T}$ and $\hat{\boldsymbol\xi}_j^{T}$ respectively. Set $\boldsymbol\omega=(\omega_{ik})_{p\times p}$.
Model SMA (\ref{b1}) may be rewritten as $\hat\bbv_j^{T}=\hat{\boldsymbol\varepsilon}_{j}^{T}(\boldsymbol\omega^{T}+\bbI_p), \forall j=1,\ldots,n$ and hence $\bbT_p=(\boldsymbol\omega+\bbI_p)(\boldsymbol\omega^{T}+\bbI_p)$.
For model SAR (\ref{b2}), assume that $\boldsymbol{\omega}-\bbI_p$ is invertible. 
 We then write $\hat\bbv_j^{T}=\hat{\boldsymbol\varepsilon}_{j}^{T}(\boldsymbol\omega^{T}-\bbI_p)^{-1}, \forall j=1,\ldots,n$. Hence $\bbT_p=(\boldsymbol\omega-\bbI_p)^{-1}(\boldsymbol\omega^{T}-\bbI_p)^{-1}$. Therefore the test statistic $M_n$ can be used to identify whether $\bbv_1,\cdots,\bbv_p$ of models (\ref{b1}) and (\ref{b2}) are independent. Hence it can capture the cross--sectional dependence for the SMA model and SAR model

As for the SEC model defined in (\ref{b3}), whether the statistic $M_n$ can detect the dependence structure of the SEC model relies on the properties of a sample covariance matrix of the form:
\begin{equation}
\bbB_n=\frac{1}{n}(\boldsymbol\omega\boldsymbol\xi+\boldsymbol\varepsilon)(\boldsymbol\omega\boldsymbol\xi+\boldsymbol\varepsilon)^{T},
\end{equation}
where $\boldsymbol\xi=(\boldsymbol\xi_1,\ldots,\boldsymbol\xi_p)^{T}$ and $\boldsymbol\varepsilon=(\boldsymbol\varepsilon_1,\ldots,\boldsymbol\varepsilon_p)^{T}$.

Under the null hypothesis $\bbH_0$, \cite{DS2007} provides the LSD of the matrix $\bbB_n$ whose Stieljes transform is
\begin{equation}
\hat m(z)=\int\frac{d\hat H(x)}{\frac{x}{1+c\hat m(z)}-(1+c\hat m(z))z+1-c},
\end{equation}
where $\hat H(x)$ is the limit of $F^{\frac{1}{n}\boldsymbol\varepsilon\boldsymbol\varepsilon^{T}}$.
\medskip

With this result, we know that the LSD of the matrix $\bbB_n$ is not the M-P law. In view of this, the proposed test $M_n$ should capture the dependence of the SEC model (\ref{b3}) in theory. However, the asymptotic distribution of the proposed test statistic $M_n$ for this case will need to be developed in future work.

\section{A general panel data model}

Note that the proposed test is based on the idea that the limits of ESDs under the null and local alternative hypotheses are different. Yet, it may be the case where there exists some dependence among the set of vectors of $\bbv_1,\cdots,\bbv_p$ but the limit of the ESD associated with such vectors is the M-P law.
Then a natural question is whether the statistic $M_n$ works in this case. We below investigate a panel data model as an example.

Consider a panel data model of the form
\begin{eqnarray}\label{panel1}
v_{ij}=\varepsilon_{ij}+\frac{1}{\sqrt{p}}u_i,\ i=1,\ldots,p; \ j=1,\ldots,n,
\end{eqnarray}
where $\{\varepsilon_{ij}, i=1,\ldots,p; j=1,\ldots,n\}$ is a sequence of i.i.d. real random variables with $E\varepsilon_{11}=0$ and $E\varepsilon_{11}^2=1$, and $\{u_i,i=1,\ldots,p\}$ are real random variables, and independent of $\{\varepsilon_{ij}, i=1,\ldots,p; j=1,\ldots,n\}$.

For any $i=1,\ldots,p$, set
\begin{eqnarray}
\bbv_i=(v_{i1},\ldots,v_{in})^{T}.
\end{eqnarray}

The aim of this section is to test the null hypothesis specified in (\ref{b10}) for model (\ref{panel1}).

Model (\ref{panel1}) can be written as
\begin{equation}
\bbv=\boldsymbol\varepsilon+\bbu\bbe^{T},
\end{equation}
where $\bbv=(\bbv_1,\ldots,\bbv_p)^{T}$, $\bbu=(\frac{1}{\sqrt{p}}u_1,\ldots,\frac{1}{\sqrt{p}}u_p)^{T}$ and $\bbe$ is $p\times 1$ vector with all elements being one.

Consider the sample covariance matrix
\begin{equation}\label{h2}
\bbS_n=\frac{1}{n}\bbv\bbv^{T}=\frac{1}{n}(\boldsymbol\varepsilon+\bbu\bbe^{T})(\boldsymbol\varepsilon+\bbu\bbe^{T})^{T}.
\end{equation}

By Lemma \ref{lem6} in the appendix and the fact that $rank(\bbu\bbe^{T})\leq 1$, it can be concluded that the limit of the ESD of the matrix $\bbS$ is the same as that of the matrix $\frac{1}{n}\boldsymbol\varepsilon\boldsymbol\varepsilon^{T}$, i.e. the M-P law.
Even so, we still would like to use the proposed statistic $M_n$ to test the null hypothesis of mutual independence. However, this model does not necessarily satisfy Assumption \ref{assu1} because the elements of each vector $\bbv_i$ are not independent and they include the common random factor $u_i$. As a consequence, Theorem \ref{thm2} thus can not be directly applied to this model. Therefore, we need to develop a new asymptotic theory for the proposed statistic $M_n$ for this model.


\begin{thm}\label{thm4}
For model (\ref{panel1}), in addition to Assumptions \ref{assu1} and \ref{assu3}, we assume that
\begin{equation}\label{asu}
E\|\bbu\|^4<\infty \ \ \mbox{and} \ \ \frac{1}{p^2}E\left[\sum^{p}_{i\neq j}(u_i^2-\bar u)(u_j^2-\bar u)\right]\rightarrow 0 \ \ as\ n\rightarrow\infty,
\end{equation}
where $\bar u$ is a positive constant.

Then, the proposed test statistic $p^2M_n$ converges in distribution to the random variable $R_2$ given by
\begin{eqnarray}
R_2=\int^{t_2}_{t_1}\big(|W(t)|^2+|Q(t)|^2\big)dU(t),
\end{eqnarray}
where $(W(t),Q(t))$ is a Gaussian vector whose mean and covariance are specified as follows:
\begin{eqnarray}\label{panel12}
E\left[W(t)\right] &=&-\frac{c}{2\pi i}\oint_{\gamma} cos(t_jz)\frac{\underline{m}^3(z)(1+\underline{m}(z))}{((1+\underline{m}(z))^2-c\underline{m}^2(z))^2}dz
\nonumber\\
&& -\frac{c(EY_{11}^4-3)}{2\pi i}\oint_{\gamma}cos(t_jz)\frac{\underline{m}^3(z)}{(1+\underline{m}(z))^2-c\underline{m}^2(z)}dz\non
&&+\frac{c}{2\pi i}\oint_{\gamma}cos(t_jz)\frac{\underline{m}(z)}{z((1+\underline{m}(z))^2-c\underline{m}^2(z))}dz\non
&&-\frac{1}{2\pi i}\oint_{\gamma}cos(t_jz)\frac{\frac{c\underline{m}(z)}{z[(1+\underline{m}(z))^2-c\underline{m}^2(z)]}+\bar u\int\frac{1}{(\lambda-z)^2}dF^{MP}(\lambda)}
{\bar uz\underline{m}^2(z)-1}dz
\end{eqnarray}
and
\begin{eqnarray}\label{panel13}
&&Cov(W(t_j),Q(t_h))=-\frac{1}{2\pi^2}\oint_{\gamma_1}\oint_{\gamma_2}\frac{cos(t_jz_1)sin(t_hz_2)}{(\underline{m}(z_1)-\underline{m}(z_2))^2}
\frac{d}{dz_1}\underline{m}(z_1)\frac{d}{dz_2}\underline{m}(z_2)dz_1dz_2\non
&&-\frac{c(EY_{11}^4-3)}{4\pi^2}\oint_{\gamma_1}\oint_{\gamma_2}cos(t_jz_1)cos(t_hz_2)\frac{d^2}{dz_1dz_2}
[\frac{\underline{m}(z_1)\underline{m}(z_2)}{(1+\underline{m}(z_1))(1+\underline{m}(z_2))}]dz_1dz_2.
\end{eqnarray}
Replacing $cos(t_jz)$ in $E\left[W(t_j)\right]$ by $sin(t_jz)$ yields the expression of $E\left[Q(t_j)\right]$. The covariances $Cov(W(t_j),W(t_h))$ and $Cov(Q(t_j),Q(t_h))$ are similar except replacing $sin(t_hz)$ and $cos(t_jz)$ by $cos(t_hz)$ and $sin(t_jz)$ respectively. The contours in (\ref{panel12}) and (\ref{panel13}) both enclose the interval $[(1-\sqrt{c})^2+2c\bar u, (1+\sqrt{c})^2+2c\bar u]$. Moreover, the contours $\gamma_1$ and $\gamma_2$ are disjoint.
\end{thm}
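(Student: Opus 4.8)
The plan is to reduce the problem to an application of the CLT for linear spectral statistics (Lemma \ref{lem6} in the appendix, i.e. the Bai--Silverstein-type result already invoked in the excerpt) for a sample covariance matrix satisfying Assumption \ref{assu1}, and then to handle the rank-one perturbation $\bbu\bbe^T$ separately. Concretely, write $\bbS_n = \frac1n(\boldsymbol\varepsilon+\bbu\bbe^T)(\boldsymbol\varepsilon+\bbu\bbe^T)^T$ and expand
\[
\bbS_n = \frac1n\boldsymbol\varepsilon\boldsymbol\varepsilon^T + \frac1n\boldsymbol\varepsilon\bbe\bbu^T + \frac1n\bbu\bbe^T\boldsymbol\varepsilon^T + \frac{\|\bbe\|^2}{n}\bbu\bbu^T .
\]
The first term is a genuine Marcenko--Pastur matrix, call it $\hbbS_n=\frac1n\boldsymbol\varepsilon\boldsymbol\varepsilon^T$, for which the CLT of Lemma \ref{lem6} applies and supplies the Gaussian process whose mean involves the first two integrals in (\ref{panel12}) and whose covariance is (\ref{panel13}). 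The remaining three terms form a perturbation $\Delta_n$ of rank at most $2$; since $\|\bbe\|^2/n\to c$, the dominant piece is $c\,\bbu\bbu^T$ plus lower-order cross terms. I would use the resolvent identity
\[
(\bbS_n-z)^{-1} = (\hbbS_n-z)^{-1} - (\hbbS_n-z)^{-1}\Delta_n(\bbS_n-z)^{-1}
\]
to express $\mathrm{tr}(\bbS_n-z)^{-1}-\mathrm{tr}(\hbbS_n-z)^{-1}$ as a finite sum of bilinear forms in the resolvent of $\hbbS_n$, and then pass to the limit using concentration of quadratic forms (the $E\|\bbu\|^4<\infty$ assumption and the decorrelation condition in (\ref{asu}) are exactly what is needed to make $\frac1p\sum u_i^2\to\bar u$ and to kill the off-diagonal contributions). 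This identifies the extra deterministic drift — the third and fourth integrals in (\ref{panel12}), which carry the factors $\bar u$ and $F^{MP}$ — as the $n\to\infty$ limit of $c\cdot$(bilinear form of $\bbu$ against the limiting resolvent of $\hbbS_n$).

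In detail, the key steps in order are: (1) Verify that $\hbbS_n$ satisfies Assumption \ref{assu1} with $\bbT_n=\bbI$, so that $p(s_n^{\hbbS}(t)-s(t,c_n))$ converges to a Gaussian process $(\tilde W(t),\tilde Q(t))$ with the mean/covariance of part 2 of Theorem \ref{thm2} (i.e. the M-P special case), via the characteristic-function representation already set up in (\ref{a3}) and the contour-integral machinery. (2) Show that the perturbation contributes, at the level of the Stieltjes transform, a deterministic shift: $\underline m_{\bbS_n}(z)-\underline m_{\hbbS_n}(z) = -\frac{1}{p}\mathrm{tr}\big[(\hbbS_n-z)^{-1}\Delta_n(\bbS_n-z)^{-1}\big]+o_p(p^{-1})$, and evaluate its limit using $\Delta_n\approx c\bbu\bbu^T$ and $\bbu^T(\hbbS_n-z)^{-2}\bbu\to\bar u\int(\lambda-z)^{-2}dF^{MP}(\lambda)$ together with the identity $1+z\underline m(z)+c\underline m(z)/(\,\cdot\,)=\ldots$ that converts $\int(\lambda-z)^{-1}dF^{MP}$ into rational functions of $\underline m(z)$; the denominator $\bar u z\underline m^2(z)-1$ in the last line of (\ref{panel12}) should emerge as the Jacobian/self-consistency factor from inverting the fixed-point relation for the perturbed transform. (3) Show the random fluctuation of $p(s_n(t)-s(t))$ is asymptotically unchanged by the rank-$\le2$ perturbation — this is where one invokes that adding a finite-rank matrix shifts at most finitely many eigenvalues and, by interlacing plus the Lipschitz/analytic dependence of the relevant functionals, changes $\int e^{itx}d(p[F^{\bbS_n}-F^{c_n}])$ by $o_p(1)$ in the random part. (4) Assemble: translate the limiting $\underline m$-shift into the cosine/sine contour integrals defining $\delta_1,\delta_2$ (as in the statement of Theorem \ref{thm2}), giving the two extra terms in $E[W(t)]$; conclude $p(s_n(t)-s(t))\Rightarrow (W(t),Q(t))$ as a process in $t\in[t_1,t_2]$; and finally apply the continuous mapping theorem to $\int_{t_1}^{t_2}(|W(t)|^2+|Q(t)|^2)\,dU(t)$ to obtain $p^2M_n\Rightarrow R_2$.

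The main obstacle, I expect, is step (2) combined with (3): proving that the finite-rank perturbation contributes \emph{only} a deterministic shift to the characteristic function at the $p$-scale and nothing to the covariance. The difficulty is that $\bbu$ and $\boldsymbol\varepsilon$ interact through the resolvent $(\bbS_n-z)^{-1}$, so one must control bilinear forms $\bbu^T(\hbbS_n-z)^{-1}\boldsymbol\varepsilon(\ldots)$ uniformly for $z$ on the contour $\gamma$ and show their fluctuations are $o_p(1)$ after multiplying by $p$; this requires careful martingale-difference decompositions and the moment/decorrelation hypotheses in (\ref{asu}) (the condition $\frac1{p^2}E[\sum_{i\ne j}(u_i^2-\bar u)(u_j^2-\bar u)]\to0$ is precisely designed to ensure $\frac1p\bbu^T M\bbu$ concentrates at $\bar u\cdot\frac1p\mathrm{tr}M$ for the deterministic-equivalent matrices $M$ that appear). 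A secondary technical point is justifying the interchange of the $n\to\infty$ limit with the contour integrals and with $\int\cdot\,dU(t)$, and checking that the perturbed support $[(1-\sqrt c)^2+2c\bar u,(1+\sqrt c)^2+2c\bar u]$ is correctly enclosed by $\gamma$, $\gamma_1$, $\gamma_2$ — this last item follows from a standard rank-inequality bound on the extreme eigenvalues of $\bbS_n$ versus $\hbbS_n$ plus the shift $c\bar u$ coming from $\frac{\|\bbe\|^2}{n}\bbu\bbu^T$.
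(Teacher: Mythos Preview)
Your overall strategy---isolate $\bbD_n=\frac1n\boldsymbol\varepsilon\boldsymbol\varepsilon^T$, apply the Bai--Silverstein CLT there, and treat the remainder as a finite-rank perturbation contributing a deterministic shift---is the same as the paper's. But step (3) as written fails: interlacing only yields $p\|F^{\bbS_n}-F^{\bbD_n}\|_\infty\le 2$, which is $O(1)$, not $o_p(1)$, and interlacing gives no mechanism to separate a ``random part'' from a ``deterministic part.'' The perturbation genuinely contributes at the $p$-scale; what must be shown is that this contribution converges \emph{in probability to a constant}, uniformly on the contour, and that requires an explicit formula, not a rank inequality. Relatedly, your guess that the denominator $\bar u z\underline m^2(z)-1$ is a ``Jacobian/self-consistency factor'' from a fixed-point equation is off: it is simply the Sherman--Morrison denominator. (A minor arithmetic slip: $\bbe$ is the $n$-vector of ones, so $\|\bbe\|^2/n=1$, not $c$; the perturbation is $\bar{\boldsymbol\varepsilon}\bbu^T+\bbu\bar{\boldsymbol\varepsilon}^T+\bbu\bbu^T$ with no factor $c$.)

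The paper's key simplification, which your proposal misses, is to pass through the \emph{centered} covariance $\mathcal D_n=\frac1n\sum_j(\boldsymbol\varepsilon_j-\bar{\boldsymbol\varepsilon})(\boldsymbol\varepsilon_j-\bar{\boldsymbol\varepsilon})^T$. Because $\bbv_j-\bar\bbv=\boldsymbol\varepsilon_j-\bar{\boldsymbol\varepsilon}$, one has the exact rank-\emph{one} identity $\bbS_n=\mathcal D_n+\bar\bbv\bar\bbv^T$, so Sherman--Morrison gives
\[
\mathrm{tr}\,\bbS_n^{-1}(z)-\mathrm{tr}\,\mathcal D_n^{-1}(z)=-\frac{\bar\bbv^{T}\mathcal D_n^{-2}(z)\bar\bbv}{1+\bar\bbv^{T}\mathcal D_n^{-1}(z)\bar\bbv},
\]
a single explicit bilinear-form ratio. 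A second Sherman--Morrison step ($\mathcal D_n=\bbD_n-\bar{\boldsymbol\varepsilon}\bar{\boldsymbol\varepsilon}^T$) reduces everything to quadratic forms in $\bbD_n^{-1}(z)$ and $\bbD_n^{-2}(z)$. The paper then shows, using concentration results from \cite{P2011} and \cite{PZ2011} together with condition (\ref{asu}) to force $\bbu^T\bbu\to\bar u$, that $\bar\bbv^{T}\mathcal D_n^{-1}(z)\bar\bbv\to -(1+z\underline m(z))/(z\underline m(z))+\bar u\,\underline m(z)$ and $\bar\bbv^{T}\mathcal D_n^{-2}(z)\bar\bbv$ converges similarly, \emph{uniformly} for $z$ on the contour; this is what makes the correction deterministic and produces the third and fourth integrals in (\ref{panel12}). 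The CLT input for the main term $\mathrm{tr}\,\mathcal D_n^{-1}(z)-pm_{c_n}(z)$ is taken from \cite{P2011} (centered sample covariance), not directly from Bai--Silverstein. Tightness in $t$ is then handled exactly as in Theorem \ref{thm2}, via the contour representation and Lemma \ref{lem5}.
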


\begin{rmk}
When $u_1,\cdots,u_p$ are independent and hence $\bbv_1,\cdots,\bbv_p$ are independent, condition (\ref{asu}) is true. 
\end{rmk}

In view of Theorem \ref{thm4}, we see that the proposed test statistic $M_n$ still works mainly due to the involvement of the last term on the right hand of (\ref{a3}).

\section{Small sample simulation studies}

This section provides some simulated examples to show the finite sample performance of the proposed test. In addition, we also compare the performance of the proposed test with that of a likelihood ratio test proposed by \cite{Ader1984}. Simulations are used to compute and find the empirical sizes and powers of the proposed test and hence evaluate the performance of the test. To show the efficiency of our test, the two kinds of dependence structures investigated in Section $3$ and $5$ are detected, such as multiple MA(1) and AR(1) model, SMA and the general panel data model.

\subsection{Empirical sizes and power values}

First we introduce the method of calculating empirical sizes and empirical powers. Let $z_{\frac{1}{2}\alpha}$ and $z_{1-\frac{1}{2}\alpha}$ be the $100(\frac{1}{2}\alpha)\%$ and $100(1-\frac{1}{2}\alpha)\%$ quantiles of the asymptotic null distribution of the test statistic $M_n$ respectively. With $K$ replications of the data set simulated under the null hypothesis, we calculate the empirical size as
\begin{equation}
\hat \alpha=\frac{\{\sharp \ of \ M_n^H\geq z_{1-\frac{1}{2}\alpha}\ or\ M_n^H\leq z_{\frac{1}{2}\alpha}\}}{K},
\end{equation}
where $M_n^H$ represents the values of the test statistic $M_n$ based on the data simulated under the null hypothesis.

In our simulation, we choose $K=1000$ as the number of repeated simulations. The significance level is $\alpha=0.05$. Since the asymptotic null distribution of the test statistic is not a classical distribution, we need to estimate the quantiles $z_{\frac{1}{2}\alpha}$ and $z_{1-\frac{1}{2}\alpha}$. Naturally, we do as follows: generate $K$ replications of the asymptotic distributed random variable and then select the $(K\frac{1}{2}\alpha)$-th smallest value $\hat z_{\frac{1}{2}\alpha}$ and $(K\frac{1}{2}\alpha)$-th largest value $\hat z_{1-\frac{1}{2}\alpha}$ as the estimated $100(\frac{1}{2}\alpha)\%$ and $100(1-\frac{1}{2}\alpha)\%$ quantiles of the asymptotic distributed random variable.

With the estimated critical points $\hat z_{\frac{1}{2}\alpha}$ and $\hat z_{1-\frac{1}{2}\alpha}$ under the null hypothesis, the empirical power is calculated as
\begin{equation}
\hat \beta=\frac{\{\sharp \ of \ M_n^A\geq \hat z_{1-\frac{1}{2}\alpha}\ or\ M_n^A\leq \hat z_{\frac{1}{2}\alpha}\}}{K},
\end{equation}
where $M_n^A$ represents the values of the test statistic $M_n$ based on the data simulated under the alternative hypothesis.

\subsection{Comparisons with the classical likelihood ratio test}
For the proposed independence test, we generate $n$ numbers of $p$-dimensional independent and identical distributed random vectors $\{\bby_{j}\}_{j=1}^{n}$, each with the mean vector $\textbf{0}_p$ and the covariance matrix $\Sigma$. Under
 the null hypothesis, $\{\bby_j\}_{j=1}^n$ are generated in two scenarios:
\begin{enumerate}
\item Each $\bbw_j$ is a $p$-dimensional normal random vector with the mean vector $\textbf{0}_p$ and the covariance matrix $\Sigma=\textbf{I}_p$ for $j=1,\ldots n$, $\bby_j=\bbT_p\bbw_j$ with $\textbf{T}_p=\textbf{I}_p$;
\item Each $\bbw_j$ consists of i.i.d. random variables with standardized Gamma(4,2) distribution, so they have zero means and unit variances; for $j=1,\ldots n$, $\bby_j=\bbT_p\bbw_j$ with $\textbf{T}_p=\textbf{I}_p$.
\end{enumerate}

Under the alternative hypothesis, we consider the case: $\bbT_p^{1/2}=(\sqrt{0.95}\textbf{I}_p, \sqrt{0.05}\textbf{1}_p)$, where $\textbf{1}_p$ is a $p$--dimensional vector with $1$ as entries. In this case, the population covariance matrix of $\bby_j$ is $\Sigma=0.95\textbf{I}_p+0.05\textbf{1}_p\textbf{1}_p^{'}$, which is called the compound symmetric covariance matrix.

For normal distributed data, the fourth moment of each element is $E|X_{11}|^4=3$; for standardized Gamma(4,2) distributional data, $E|X_{11}|^4=4.5$. This can be calculated by a formula as follows: the $k$-th moment of $X_{11}$ which is $Gamma(\alpha, \lambda)$ is
\begin{equation}
E|X_{11}|^k=\frac{(\alpha+k-1)\cdots (\alpha+1)\alpha}{\lambda^k}.
\end{equation}

\cite{Ader1984} provides a likelihood ratio criterion (LRT) to test independence for a fixed number of fixed dimensional normal distributed random vectors. We compare it with the proposed test.

Under the null hypothesis, the distribution of $L$ is the distribution of $L_2L_3\cdots L_p$, where $L_2,\ldots,L_p$ are independently distributed with $L_k$ having the distribution of $U_{m,(k-1)m,n-1-(k-1)m}$. Furthermore, for any $k=2,\ldots,p$, as $n\rightarrow\infty$, $-(n-\frac{3}{2}-\frac{km}{2})log\left[U_{m,(k-1)m,n-1-(k-1)m}\right]$ has $\chi^2$ distribution with $(k-1)m^2$ degrees of freedom (see Section $8.5$ of \cite{Ader1984}).

From the construction of the LRT test, we can see that the LRT utilizes additional $n$ observations of the random vectors $\bbx_1,\ldots,\bbx_p$ under investigation, while the proposed test does not need this information. However, we can choose $m=1$ and apply LRT for testing the independence of the random vectors $\tilde{\bbx}_1,\ldots,\tilde{\bbx}_p$, where for any $i=1,\ldots,p$, the elements of the vector $\tilde{\bbx}_i$ consist of its $n$ observations. Hence the LRT test can test independence for the number of $p$ random vectors with dimension $n$ by choosing $m=1$.

Tables \ref{tb1} and \ref{tb2} show the empirical sizes and power values of our proposed test and the LRT test for normal distributed random vectors respectively. From Tables \ref{tb1} and \ref{tb2}, we can see that the LRT test does not work when $p$ and $n$ are both large while the proposed test possesses good performance when $p$ and $n$ go to infinity at the same order. The LRT test is only applicable to the case where $p$ is fixed and $n$ tends to infinity. From Table \ref{tb2}, it can be seen that the LRT fails when $p$ is comparable with $n$. When the difference between $p$ and $n$ are large, the sizes and the powers of the proposed test become worse. This is because our test is proposed under the case that $p$ and $n$ are in the same order when they approximate to infinity. The proposed test  also works well for gamma random vectors while the LRT test is not applicable to gamma case, since, in theory, LRT test is provided for normal random vectors. Table \ref{tb3} provides the empirical sizes and empirical powers of the proposed test for the gamma case. In our simulation, we choose $p,n=5, 10, 20, 30, 40, 50, 60, 70, 80, 90,100$ for the proposed test and the LRT test. The significant level $\alpha$ is chosen as $0.05$. In each case, we run $K=1000$ repeated simulations. Reports for empirical powers show that the proposed test can check independence for both normal and gamma vectors well. Moreover, the empirical powers converge to $1$ as $n,p\rightarrow \infty$.

\subsection{Multiple MA(1), AR(1) and SMA model}

Consider multiple MA(1) model \begin{equation}\label{ma1}
\bbv_t=\bbz_t+\psi\bbz_{t-1}, t=1,\ldots,p.
\end{equation} We choose $\psi=0.5$ and the simulation results in Table \ref{tb4} show that the proposed test performs well for this model.

Consider multiple AR(1) \begin{eqnarray}\label{ar1}
\bbv_t=\phi\bbv_{t-1}+\bbz_t, \ t=1,\ldots,p, \ \bbv_0=\frac{1}{\sqrt{1-\phi^2}}\bbz_0.
\end{eqnarray}
Let $\phi=0.5$. The empirical powers for this model are provided in Table \ref{tb6AR}. As $n$ and $p$ increase in the same order, the empirical power tends to $1$.

As for the Spatial Moving Average (SMA) model, i.e.
\begin{equation}
\ v_{ji}=\sum^{p}_{k=1}\omega_{ik}\varepsilon_{jk}+\varepsilon_{ji},
\end{equation}
 generate $\varepsilon_{jk}\overset{i.i.d}{\thicksim}normal(1,1), \forall j=1,\ldots,n; k=1,\ldots,p.$

Applying the proposed statistic $M_n$ for the sample matrix $\frac{1}{n}\bbV^{*}\bbV$, the empirical power values given in Table \ref{tb6SMA} show that $M_n$ performs well for capturing the cross-sectional dependence for SMA model.

\subsection{The general panel data model}

We examine the finite sample performance of the proposed test for the general panel data model (\ref{panel1}), i.e.
\begin{eqnarray}\label{panel1*}
v_{ij}=\varepsilon_{ij}+\frac{1}{\sqrt{p}}u_i,\ i=1,\ldots,p; \ j=1,\ldots,n,
\end{eqnarray}
where $\{\varepsilon_{ij}, i=1,\ldots,p; j=1,\ldots,n\}$ is a sequence of i.i.d. random variables with $E\varepsilon_{11}=0$ and $E\varepsilon_{11}^2=1$, and $\{u_i,i=1,\ldots,p\}$ are independent of $\{\varepsilon_{ij}, i=1,\ldots,p; j=1,\ldots,n\}$.

Under the null hypothesis, we generate $u_i\overset{i.i.d}{\thicksim}normal(1,1)$, $i=1,\ldots,p$ and under the alternative hypothesis, we generate the data from $\bbu=(\frac{1}{\sqrt{p}}u_1,\frac{1}{\sqrt{p}}u_2,\ldots,\frac{1}{\sqrt{p}}u_p)\thicksim \frac{1}{\sqrt{p}}N(\textbf{1}_{p}, \Sigma)$, where $\Sigma=\bbT\bbT^{T}$ and $\bbT$ is a $p\times p$ matrix with its elements being generated by $t_{ik}\overset{i.i.d.}{\thicksim}U(0,1)$, $i,k=1,\ldots,p$.

The simulation results for the empirical sizes and power values given in Table \ref{tb8} show that the proposed test can capture the dependence for the general panel data model (\ref{panel1}).

\subsection{Some other time series models and Vandermonde matrix}

Dependent structures of a set of random vectors are normally described by non-zero correlations among them, such as the linear dependent structure developed in Section $3$. However, there are some data that are not independent but uncorrelated. We consider three such examples and test their dependence by the proposed test although we have not developed an asymptotic theory for each of such local alternatives.

\subsubsection{Nonlinear MA model}

Consider a nonlinear MA model of the form
\begin{equation}\label{nMA}
R_{tj}=Z_{t-1,j}Z_{t-2,j}(Z_{t-2,j}+Z_{tj}+1),\ t=1,\ldots,p;\ j=1,\ldots,n;
\end{equation}
where $\bbz_t=(Z_{t1},\ldots,Z_{tn})$ is an $n$-dimensional random vector with i.i.d. elements, each of which has zero mean and unit variance, and $\bbr_t=(R_{t1},\ldots,R_{tn})$.

For any $j=1,\ldots,n$, the correlation matrix of $(R_{j1}, R_{j2}, \ldots, R_{jp})$ is a diagonal matrix. This model has been discussed by \cite{KuanLee} for testing a martingale difference hypothesis. Our proposed independence test can be applied to this nonlinear MA model, and the power values listed in Table \ref{tb5} show that the proposed test performs well for this model.

This discussion also indicates that the limit of the empirical spectral distribution of the nonlinear MA model (\ref{nMA}) is not the M-P law since the proposed test statistic is established on the characteristic function of the M-P law.

\subsubsection{Multiple ARCH(1) model}

Consider a multiple autoregressive conditional heteroscedastic (ARCH(1)) model of the form:
\begin{equation}
W_{tj}=Z_{tj}\sqrt{\alpha_0+\alpha_1W_{t-1,j}^2}, \ t=1,\ldots,p;\ j=1,\ldots, n;
\end{equation}
where $\bbz_t=(Z_{t1},\ldots,Z_{tn})$ is an $n$-dimensional random vector with i.i.d. elements, each of which has zero mean and unit variance, and $\boldsymbol\omega_t=(W_{t1},\ldots,W_{tn})$.

For each $j=1,\ldots,n$, ARCH(1) model $(W_{1j}, W_{2j}, \ldots, W_{pj})$ is a martingale difference sequence. ARCH(1) model has many applications in financial analysis. There exists no theoretical results stating that the LSD of the sample covariance matrix for ARCH(1) model is M-P Law, but from Figure \ref{fig0}, we can see that the LSD of the sample covariance matrix for ARCH(1) model is indeed M-P Law. A rigorous study is under investigation.  For the ARCH(1) model, the proposed test can not capture the dependence of $(\boldsymbol\omega_1,\boldsymbol\omega_2,\ldots,\boldsymbol\omega_p)$ directly, but we can test the dependence of
$(\boldsymbol\omega_1^2,\boldsymbol\omega_2^2,\ldots,\boldsymbol\omega_p^2)$. Since this test can tell us that $(\boldsymbol\omega_1^2,\boldsymbol\omega_2^2,\ldots,\boldsymbol\omega_p^2)$ are not independent, naturally it can be concluded that $(\boldsymbol\omega_1,\boldsymbol\omega_2,\ldots,\boldsymbol\omega_p)$ are not independent either. Here, we take $\alpha_0=0.9$ and $\alpha_1=0.1$. Table \ref{tb6} shows the power values of our test for testing dependence of ARCH(1) model.


\subsubsection{Vandermonde matrix}
Consider the $n\times p$ vandermonde matrix $\bbV$ of the form
\begin{equation}
\bbV=\frac{1}{\sqrt{n}}\left(
\begin{array}{cccc}
1 & 1 & \cdots & 1 \\
e^{-i\omega_1} & e^{-i\omega_2}& \cdots & e^{-i\omega_p} \\
\vdots & \vdots & \ddots & \vdots \\
e^{-i(n-1)\omega_1} & e^{-i(n-1)\omega_2} & \cdots & e^{-i(n-1)\omega_{p}}
\end{array}
\right)
\end{equation}
where $\omega_i$ for $i=1,\ldots,p$ are called phased distributions and are assumed to be i.i.d on $[0,2\pi)$. Then, the entries of $\bbV$ lie on the unit circle. Obviously, all the entries of the rows of $\bbV$ are not independent while the columns are independent. Denote the sample covariance matrix of $\bbV$ by $\bbD=\bbV^{H}\bbV$.

Vandermonde matrices play an important role in signal processing and wireless applications, such as direction of arrival estimation, pre--coding or sparse sampling theory. \cite{RD2009} have established that as both $n, p$ go to $\infty$ with their ratio being a positive constant, the limiting spectral distribution of $\bbD=\bbV^{H}\bbV$ is not the M-P law. This result reminds us that the proposed test should be applied to capture the dependence structure of the rows of the matrix $\bbV$. It is easy to see that, for any $k=1,\ldots,n-1$ and $j=1,\ldots,p$, $E(e^{-ik\omega_{j}})^2=0$ and $E|e^{-ik\omega_{j}}|^4=1$. The empirical power values given in Table \ref{tb7} show that the proposed test works well in detecting dependence of Vandermonde matrices.

\section{Empirical analysis of financial data}

As an application of the proposed independence test, we test whether there is any cross-sectional independence among the daily closed stock prices of some relevant companies from New York Stock Exchange (NYSE) during the period $1990.1.1-2002.1.1$. The data set is obtained from Wharton Research Data Services (WRDS) database. It consists of daily closed stock prices of $632$ companies at $5001$ days.

We randomly choose $p$ companies from the total $632$ companies in the data set. For each company $i=1,2,\ldots,p$, there exist $5001$ daily closed stock prices and denote it as $\bbs_i=(s_{i1},s_{i2},\ldots,s_{i5001})$. From the price vector $\bbs_i$, we construct a time series of closed stock prices with length $n$: $\bbx_i=(s_{i1}, s_{i,1+50}, s_{i,1+2\times 50}, \ldots, s_{i,1+50n})$. The construction of $\bbx_i, i=1,\ldots,p$ is based on the idea that the prices tend to be independent as the length of the time between them is large. Hence for each company $i$, the elements of stock price time series $\bbx_i$ are independent.

The proposed test $M_n$ is applied to test cross--sectional independence of $\bbx_1,\bbx_2,\ldots,\bbx_p$. For each $(p,n)$, we randomly choose $p$ companies from the database, construct the corresponding vectors $\bbx_1,\bbx_2,\ldots,\bbx_p$, and then calculate the P-value of the proposed test. Repeat this procedure $100$ times and plot the P-value graph to see whether the cross-sectional 'dependence' feature is popular in NYSE.

From Figure \ref{fig1}, we can see that, as the number of companies $p$ increases, more experiments are rejected in terms of the P-values below $0.05$. When $p=30, n=35$, all the $100$ experiments are rejected. This phenomenon is reasonable since as $p$ increases, the opportunity that we choose dependent companies increases as the number of total companies is invariant. It shows that cross-sectional dependence exists and is popular in NYSE. This suggests that the assumption that cross-sectional independence in such empirical studies may not be appropriate.

As comparison, we select $40$ companies from the transportation section of NYSE and investigate the dependence of daily stock prices of these companies from the same industry section--transportation section. We also perform the same experiments as above for those total $40$ companies. The P-value graphs of $n=10$, $p=5$, $n=15$, $p=10$ and $n=20$, $p=15$ are provided in Figure \ref{fig2}. Compared with the corresponding P-value graphs $n=10$, $p=5$, $n=15$, $p=10$ and $n=20$, $p=15$ in Figure \ref{fig1}, the daily closed stock prices of companies from the same section are more likely to be dependent.

\section{Conclusions}

This paper provides an approach for testing independence among a large number of high dimensional random vectors based on the characteristic function of the empirical spectral distribution of the sample covariance matrix of the random vectors. This test can capture various kinds of dependent structures, such as. MA(1), AR(1) model, nonlinear MA(1) model, ARCH(1) model and the general panel data model established in the simulation section. The conventional method (LRT proposed by \cite{Ader1984}) utilises the correlated relationship between the random vectors to capture their dependence. This idea is only efficient for normally distributed data. It may be an inappropriate tool for non-Gaussian distributed data, such as martingale difference sequences (such as, ARCH(1) model), nonlinear MA(1) model, and the Vandermonde matrix, which possess dependent but uncorrelated structures. The proposed test is not restricted to normally distributed data. Moreover, it is designed for a large number of high dimensional random vectors. 
An empirical application to closed stock prices of several companies from New York Stock Exchange highlights this approach.

\section{Appendix}
\noindent{\bf A.1 \ Some useful lemmas}
\medskip

\begin{lem}[Theorem $8.1$ of \cite{Bill1999}]\label{lem2}
Let $P_n$ and $P$ be probability measures on $(C,\mathcal{\varphi})$. If the finite dimensional distributions of $P_n$ converge weakly to those of $P$, and if $\{P_n\}$ is tight, then $P_n\Rightarrow P$.
\end{lem}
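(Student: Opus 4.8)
The plan is to combine Prohorov's relative-compactness theorem with the fact that finite-dimensional distributions uniquely determine a Borel probability measure on the path space, and then to close with a standard subsequence principle. Throughout, $C$ denotes the space of continuous real-valued functions on a compact interval endowed with the uniform metric, and $\mathcal{\varphi}$ is the Borel $\sigma$-field it generates; a key structural fact I will rely on is that $\mathcal{\varphi}$ coincides with the $\sigma$-field generated by the coordinate projections $\pi_{t_1,\dots,t_k}\colon C\to\mathbb{R}^k$.

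First I would record two preliminary facts. (a) Each projection $\pi_{t_1,\dots,t_k}$ is continuous on $C$, so by the continuous-mapping theorem any weak convergence $Q_n\Rightarrow Q$ on $C$ forces the finite-dimensional distributions of $Q_n$ to converge weakly to those of $Q$. (b) Because the projections generate $\mathcal{\varphi}$, a $\pi$--$\lambda$ (monotone-class) argument shows that two Borel probability measures on $(C,\mathcal{\varphi})$ that agree on all finite-dimensional distributions must coincide; in other words, the finite-dimensional distributions pin down the measure on $C$.

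Next I would exploit the tightness hypothesis. By Prohorov's theorem, tightness of $\{P_n\}$ implies relative compactness in the topology of weak convergence: every subsequence $\{P_{n_k}\}$ contains a further subsequence $\{P_{n_{k_j}}\}$ converging weakly to some probability measure $P'$ on $(C,\mathcal{\varphi})$. Applying fact (a) to this convergent sub-subsequence, the finite-dimensional distributions of $P_{n_{k_j}}$ converge to those of $P'$. But the hypothesis gives that the finite-dimensional distributions of the full sequence $P_n$ converge to those of $P$, hence so do those of every subsequence; comparing the two limits shows that $P'$ and $P$ have identical finite-dimensional distributions, and fact (b) then forces $P'=P$.

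Finally I would invoke the subsequence principle: we have shown that every subsequence of $\{P_n\}$ admits a further subsequence converging weakly to the \emph{same} limit $P$. Since $C$ is Polish, the weak topology on Borel probability measures over $C$ is metrizable (e.g.\ by the Prohorov metric), and in a metric space this subsequential property implies convergence of the whole sequence, giving $P_n\Rightarrow P$. The crux is the interplay at this last step: tightness is precisely what supplies, via Prohorov, the relative compactness that produces subsequential limits, while the identification of each limit with $P$ rests on fact (b). I expect the main obstacle to be establishing (b) cleanly, since it requires checking that the finite-dimensional (cylinder) sets form a $\pi$-system generating $\mathcal{\varphi}$ and then applying Dynkin's lemma; by contrast, the relative-compactness half is a direct appeal to Prohorov's theorem.
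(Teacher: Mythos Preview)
Your argument is correct and is essentially the classical proof: Prohorov's theorem gives subsequential weak limits, continuity of the coordinate projections transfers weak convergence to finite-dimensional distributions, the $\pi$--$\lambda$ argument identifies every such limit with $P$, and the metrizable subsequence principle closes the loop. Note, however, that the paper does not supply its own proof of this lemma at all; it is stated as a citation of Theorem~8.1 of Billingsley (1999) and used as a black box in the proof of Theorem~1, so there is nothing in the paper to compare your proposal against beyond the reference itself, whose proof your outline faithfully reproduces.
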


\begin{lem}[Theorem $12.3$ of \cite{Bill1999}]\label{lem3}
The sequence $\{X_n\}$ is tight if it satisfies these two conditions

(I) The sequence $\{X_n(0)\}$ is tight.

(II) There exists constants $\gamma\geq 0$, $\alpha>1$, and a nondecreasing, continuous function $F$ on $[0,1]$ such that
\begin{equation}
E\{|X_n(t_2)-X_n(t_1)|^{\gamma}\}\leq |F(t_2)-F(t_1)|^{\alpha}
\end{equation}
holds for all $t_1, t_2$, and $n$.
\end{lem}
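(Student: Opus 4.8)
The plan is to prove a functional central limit theorem for the complex process $X_n(t):=p(s_n(t)-s(t))=\int e^{itx}\,dG_n(x)$, $t\in[t_1,t_2]$, and then read off the limit of $p^2M_n=\int_{t_1}^{t_2}|X_n(t)|^2\,dU(t)$ by the continuous mapping theorem. By Cauchy's formula,
\[
X_n(t)=-\frac{1}{2\pi i}\oint_{\gamma}e^{itz}\,p\big[m_n(z)-m(z)\big]\,dz,
\]
where $m_n(z)=\frac1p\,\mathrm{tr}(\bbS_n-z\bbI)^{-1}$, $m(z)$ is the Stieltjes transform of the M--P law, and $\gamma$ encloses the spectrum of $\bbS_n$; this is exactly why $\gamma$ must be enlarged to contain $[(1-\sqrt c)^2+2c\bar u,(1+\sqrt c)^2+2c\bar u]$, so that any outlier created by the common factor stays inside the contour. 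Everything therefore reduces to a CLT for the Stieltjes process $Y_n(z):=p[m_n(z)-m(z)]$ on $\gamma$.

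The key is to peel off the effect of the factor. Expanding $\bbS_n=\frac1n(\boldsymbol\varepsilon+\bbu\bbe^{T})(\boldsymbol\varepsilon+\bbu\bbe^{T})^{T}$ yields
\[
\bbS_n=\tfrac1n\boldsymbol\varepsilon\boldsymbol\varepsilon^{T}+\Delta_n,\qquad \Delta_n=\tfrac{1}{\sqrt p}\big(\bar{\boldsymbol\varepsilon}\,\tilde\bbu^{T}+\tilde\bbu\,\bar{\boldsymbol\varepsilon}^{T}\big)+\tfrac1p\,\tilde\bbu\tilde\bbu^{T},
\]
where $\tilde\bbu=(u_1,\dots,u_p)^{T}$ and $\bar{\boldsymbol\varepsilon}=\frac1n\boldsymbol\varepsilon\bbe$ is the vector of row means, so $\Delta_n$ has rank at most two. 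Correspondingly $Y_n(z)=Y_n^{0}(z)+D_n(z)$, where $Y_n^{0}(z)=p[m_n^{\varepsilon}(z)-m(z)]$ is built from the resolvent $\bbG(z)=(\frac1n\boldsymbol\varepsilon\boldsymbol\varepsilon^{T}-z\bbI)^{-1}$ and $m_n^{\varepsilon}=\frac1p\mathrm{tr}\,\bbG$, while $D_n(z)=p[m_n(z)-m_n^{\varepsilon}(z)]$. Since the entries of $\boldsymbol\varepsilon$ are i.i.d.\ and satisfy Assumption \ref{assu1}, $Y_n^{0}$ is governed by the Bai--Silverstein CLT (Theorem $1.1$ of \cite{BS2004}, together with Lemma \ref{lem6}); its Gaussian limit contributes precisely the first two terms of the mean in (\ref{panel12}) and the whole covariance (\ref{panel13}), i.e.\ the null M--P expressions with $H$ degenerate at $1$.

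For the correction $D_n(z)$ I would apply the resolvent identity and the Sherman--Morrison--Woodbury formula to the rank-$\le 2$ update $\Delta_n$, which writes $D_n(z)$ through finitely many quadratic forms in $\bbG(z)$, namely $\frac1p\tilde\bbu^{T}\bbG(z)\tilde\bbu$, $\frac{1}{\sqrt p}\bar{\boldsymbol\varepsilon}^{T}\bbG(z)\tilde\bbu$ and $\bar{\boldsymbol\varepsilon}^{T}\bbG(z)\bar{\boldsymbol\varepsilon}$. Because $\tilde\bbu$ is independent of $\boldsymbol\varepsilon$ and the deterministic equivalent of $\bbG(z)$ is a scalar multiple of the identity (with normalized trace $m(z)$), the off-diagonal contributions are suppressed and $\frac1p\tilde\bbu^{T}\bbG(z)\tilde\bbu\to\bar u\,m(z)$; this is where $\bar u$ enters, and where condition (\ref{asu}) is used, since it controls $\mathrm{Var}(\frac1p\sum_iu_i^2)$ and thereby forces $D_n(z)$ to be asymptotically \emph{deterministic}. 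Evaluating the remaining forms (using that $\bar{\boldsymbol\varepsilon}$ feels $\bbG$ only through traces, which bring in $\int(\lambda-z)^{-2}dF^{MP}(\lambda)=m'(z)$, and that the rank-one Woodbury denominator reduces via the M--P identities to $\bar u z\underline m^2(z)-1$) and substituting into the contour integral produces the third and fourth mean terms of (\ref{panel12}).

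The main obstacle is precisely this analysis of $D_n(z)$: showing that the factor contribution \emph{concentrates}, so that it only shifts the mean and adds no variance, and computing the limits of the quadratic forms --- in particular the cross term $\bar{\boldsymbol\varepsilon}^{T}\bbG(z)\tilde\bbu$, in which $\bar{\boldsymbol\varepsilon}$ is correlated with $\bbG(z)$, and controlling the possible outlier eigenvalue so that the contour representation remains valid. Condition (\ref{asu}) is exactly the hypothesis that makes the concentration go through; the remark after the theorem notes that it holds automatically when the $u_i$ are independent. Once $Y_n(z)$ is shown to converge on $\gamma$ to a Gaussian process with the shifted mean, finite-dimensional convergence of $X_n(t)$ follows from Lemma \ref{lem2} and tightness from Lemma \ref{lem3} (verifying $E|X_n(t_2)-X_n(t_1)|^2\le C|t_2-t_1|^2$ through the uniform contour bound on $Y_n$), giving $X_n(t)\Rightarrow W(t)+iQ(t)$. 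The continuous mapping theorem applied to the map $f\mapsto\int_{t_1}^{t_2}|f(t)|^2\,dU(t)$ then yields $p^2M_n\Rightarrow R_2$.
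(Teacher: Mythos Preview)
Your proposal does not address the stated lemma at all. The statement in question is Lemma~\ref{lem3}, which is simply Theorem~12.3 of \cite{Bill1999}: a general tightness criterion for processes indexed by $[0,1]$. The paper does not prove this lemma; it is quoted as a black-box tool from Billingsley, exactly as stated, and is later invoked (for instance in the proof of Theorem~\ref{thm2} and of Lemma~\ref{thm3}) to verify tightness of various processes on the contour $\gamma$.

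What you have written is instead a proof sketch for Theorem~\ref{thm4} (the panel data model result $p^2M_n\Rightarrow R_2$). That is a completely different statement. If your intent was to prove Theorem~\ref{thm4}, then your outline is broadly in the spirit of the paper's own argument --- peel off the low-rank perturbation coming from $\bbu\bbe^T$, reduce to the Bai--Silverstein CLT for the i.i.d.\ part, and show the correction concentrates to a deterministic shift in the mean --- but the paper organizes this differently: it first rewrites $\bbS_n=\mathcal{D}_n+\bar\bbv\bar\bbv^T$ (a rank-one perturbation of the centered sample covariance $\mathcal{D}_n$), then uses the Sherman--Morrison identity (\ref{b5}) to get the exact decomposition (\ref{panel2}), and handles the quadratic forms $\bar\bbv^T\mathcal{D}_n^{-1}(z)\bar\bbv$ and $\bar\bbv^T\mathcal{D}_n^{-2}(z)\bar\bbv$ term by term via (\ref{panel3})--(\ref{panel10}), appealing to results of \cite{P2011} and \cite{PZ2011}. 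Your rank-$\leq 2$ Woodbury expansion of $\bbS_n$ around $\frac1n\boldsymbol\varepsilon\boldsymbol\varepsilon^T$ is an alternative route to the same end, but it is not what the paper does, and in any case none of this bears on Lemma~\ref{lem3}.

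In short: Lemma~\ref{lem3} is a citation, not something to be proved here, and your proposal targets the wrong result.
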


\begin{lem}[Continuous Theorem]\label{lem4}
Let $X_n$ and $X$ be random elements defined on a metric space $S$. Suppose $g:S\rightarrow S^{'}$ has a set of discontinuous points $D_g$ such that $P(X\in D_g)=0$. Then
\begin{equation}
X_n\overset{d}{\rightarrow} X \Rightarrow g(X_n)\overset{d}{\rightarrow} g(X).
\end{equation}
\end{lem}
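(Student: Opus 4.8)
The plan is to reduce the claim to the portmanteau characterization of weak convergence in a metric space (see \cite{Bill1999}), which asserts that $X_n\overset{d}{\rightarrow}X$ in $S$ holds if and only if $\limsup_n P(X_n\in F)\leq P(X\in F)$ for every closed set $F\subseteq S$, and that the analogous inequality over closed subsets of $S'$ characterizes $g(X_n)\overset{d}{\rightarrow}g(X)$. Thus it will suffice to fix an arbitrary closed set $F'\subseteq S'$ and verify $\limsup_n P(g(X_n)\in F')\leq P(g(X)\in F')$. Throughout I would assume $g$ is Borel measurable, so that $g(X_n)$ and $g(X)$ are genuine random elements and every preimage $g^{-1}(F')$ is measurable; this is implicit in the statement, and I would record it at the outset.

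First I would rewrite $\{g(X_n)\in F'\}=\{X_n\in g^{-1}(F')\}$. Since $g^{-1}(F')$ need not be closed, I would enlarge it to its topological closure $\overline{g^{-1}(F')}$, which is closed and therefore admissible in the portmanteau inequality. The crux of the argument, and the only place where continuity of $g$ enters, is the inclusion
\begin{equation}
\overline{g^{-1}(F')}\subseteq g^{-1}(F')\cup D_g.
\end{equation}
To establish it I would take $x\in\overline{g^{-1}(F')}$ together with a sequence $x_k\in g^{-1}(F')$ with $x_k\to x$: if $g$ is continuous at $x$ then $g(x_k)\to g(x)$, and since each $g(x_k)\in F'$ and $F'$ is closed, the limit satisfies $g(x)\in F'$, giving $x\in g^{-1}(F')$; otherwise $x$ is by definition a point of $D_g$.

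With this inclusion in hand I would chain the estimates, invoking the portmanteau inequality for the closed set $\overline{g^{-1}(F')}$, subadditivity of $P$, and the hypothesis $P(X\in D_g)=0$:
\begin{eqnarray*}
\limsup_n P(g(X_n)\in F') &\leq& \limsup_n P\big(X_n\in\overline{g^{-1}(F')}\big)\\
&\leq& P\big(X\in\overline{g^{-1}(F')}\big)\\
&\leq& P\big(X\in g^{-1}(F')\big)+P(X\in D_g)\\
&=& P(g(X)\in F').
\end{eqnarray*}
Since $F'$ was an arbitrary closed subset of $S'$, the portmanteau criterion then delivers $g(X_n)\overset{d}{\rightarrow}g(X)$. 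For this classical lemma there is no serious analytic obstacle; I expect the only genuinely delicate step to be the closure inclusion displayed above, since that is precisely where the almost-sure continuity hypothesis $P(X\in D_g)=0$ is converted from a purely topological statement about $g$ into the probabilistic bound that closes the argument.
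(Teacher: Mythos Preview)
Your argument is correct and is the standard portmanteau-based proof of the continuous mapping theorem. Note, however, that the paper does not supply its own proof of this lemma: it is listed in Appendix~A.1 among several classical results (from \cite{Bill1999} and elsewhere) that are merely quoted for later use, so there is no proof in the paper to compare against. Your write-up would serve perfectly well as a self-contained justification; the only small point to make explicit, as you already flagged, is the standing assumption that $g$ is Borel measurable so that $g^{-1}(F')$ and $D_g$ are measurable sets.
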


\begin{lem}[Complex mean value theorem (see Lemma $2.4$ of \cite{GN2006})]\label{lem5}
Let $\Omega$ be an open convex set in $\mathbb{C}$. If $f: \Omega\rightarrow \mathbb{C}$ is an analytic function and $a,b$ are distinct points in $\Omega$, then there exist points $u,v$ on $L(a,b)$ such that
\begin{equation}
Re(\frac{f(a)-f(b)}{a-b})=Re(f^{'}(u)),\
Im(\frac{f(a)-f(b)}{a-b})=Im(f^{'}(v)),
\end{equation}
where $Re(z)$ and $Im(z)$ are the real and imaginary parts of $z$ respectively; and $L(a,b)\triangleq \{a+t(b-a): t\in (0,1)\}$.
\end{lem}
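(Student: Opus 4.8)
The plan is to reduce the statement to the classical real-variable mean value theorem, applied separately to the real and imaginary parts of a suitably normalised restriction of $f$ to the segment $L(a,b)$. The convexity of $\Omega$ is precisely what makes this work: it guarantees that the whole segment $\{a+t(b-a):t\in[0,1]\}$ lies inside $\Omega$, so that $f$ is analytic — and in particular $f'$ exists and is continuous — at every point of the segment.

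First I would introduce the parametrisation $\phi(t)=a+t(b-a)$ for $t\in[0,1]$ and define the complex-valued function of the real variable $t$
\be
G(t)=\frac{f(\phi(t))}{b-a},\qquad t\in[0,1].
\ee
By the chain rule, $G'(t)=\frac{1}{b-a}f'(\phi(t))\phi'(t)=f'(\phi(t))$, which is exactly the normalisation that cancels the factor $b-a$ and leaves $f'$ evaluated on the segment. Moreover $G(1)-G(0)=\frac{f(b)-f(a)}{b-a}=\frac{f(a)-f(b)}{a-b}$, so the quantity whose real and imaginary parts we wish to control is precisely the total increment of $G$ over $[0,1]$.

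Next I would split $G$ into real and imaginary parts, writing $G(t)=A(t)+iB(t)$ with $A,B$ real-valued. Since $f'$ is continuous along the segment, both $A$ and $B$ are continuously differentiable on $[0,1]$, with $A'(t)=Re(f'(\phi(t)))$ and $B'(t)=Im(f'(\phi(t)))$. Applying the ordinary mean value theorem to $A$ produces $t_1\in(0,1)$ with $A(1)-A(0)=A'(t_1)$; setting $u=\phi(t_1)\in L(a,b)$ yields $Re\big(\frac{f(a)-f(b)}{a-b}\big)=Re(f'(u))$. Applying the mean value theorem \emph{independently} to $B$ produces $t_2\in(0,1)$ with $B(1)-B(0)=B'(t_2)$; setting $v=\phi(t_2)\in L(a,b)$ yields $Im\big(\frac{f(a)-f(b)}{a-b}\big)=Im(f'(v))$, which is the claim.

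There is no deep analytic obstacle here; the genuine content of the statement is the observation that one cannot in general take $u=v$. The two invocations of the real mean value theorem are applied to two different real functions $A$ and $B$, and there is no reason a single interior point should realise both increments simultaneously — this is exactly why the naive one-point complex analogue of the mean value theorem fails and why the statement is phrased with two separate points. The only step requiring care is the justification that $A$ and $B$ are differentiable with the stated derivatives, which follows from analyticity of $f$ together with convexity of $\Omega$ ensuring the segment stays off the boundary.
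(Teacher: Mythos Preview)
Your argument is correct and is the standard proof of this complex mean value theorem: parametrise the segment, divide by $b-a$ so that the chain rule yields $G'(t)=f'(\phi(t))$, and apply the real mean value theorem separately to the real and imaginary parts. Note, however, that the paper does not give its own proof of this lemma; it is simply quoted from \cite{GN2006} (Lemma~2.4 there), so there is nothing in the paper to compare your approach against. Your write-up is in fact the same reduction that underlies the cited result.
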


\begin{lem}[Theorem A.44 of \cite{BS2009}]\label{lem6}
Let $\bbA$ and $\bbB$ be two $p\times n$ complex matrices. Then,
\begin{eqnarray}
||F^{\bbA\bbA^{*}}-F^{\bbB\bbB^{*}}||\leq \frac{1}{p}rank(\bbA-\bbB).
\end{eqnarray}
\end{lem}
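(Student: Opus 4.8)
The plan is to reduce the statement to a counting comparison of the singular values of $\bbA$ and $\bbB$, deliberately avoiding the lossy route through the Hermitian matrix $\bbA\bbA^{*}-\bbB\bbB^{*}$. The eigenvalues of $\bbA\bbA^{*}$ are exactly the squared singular values $\sigma_i^2(\bbA)$, padded with $\max(p-n,0)$ zeros so that there are $p$ of them, and similarly for $\bbB$; thus $F^{\bbA\bbA^{*}}$ is the empirical distribution of the $\sigma_i^2(\bbA)$. One could instead write $\bbA\bbA^{*}-\bbB\bbB^{*}=\bbA(\bbA-\bbB)^{*}+(\bbA-\bbB)\bbB^{*}$, whose rank is at most $2\,\mathrm{rank}(\bbA-\bbB)$, and then appeal to the Hermitian rank inequality; but this yields only the bound $\tfrac{2}{p}\mathrm{rank}(\bbA-\bbB)$. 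To recover the sharp constant $1$ I would work with the singular values directly.

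Set $r=\mathrm{rank}(\bbA-\bbB)$ and order the singular values decreasingly, $\sigma_1(\bbA)\ge\cdots\ge\sigma_p(\bbA)\ge 0$, and likewise for $\bbB$. The first step is to invoke Weyl's perturbation inequality for singular values, $\sigma_{i+j-1}(\bbA)\le\sigma_i(\bbB)+\sigma_j(\bbA-\bbB)$, applied with $\bbA=\bbB+(\bbA-\bbB)$. Choosing $j=r+1$ and using $\sigma_{r+1}(\bbA-\bbB)=0$ gives the shifted interlacing $\sigma_{i+r}(\bbA)\le\sigma_i(\bbB)$ for every admissible $i$; by symmetry $\sigma_{i+r}(\bbB)\le\sigma_i(\bbA)$ as well. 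In other words, passing from the ordered singular values of $\bbB$ to those of $\bbA$ can shift each value by at most $r$ positions.

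The second step is to convert this into a bound on counting functions. For $y\ge 0$ put $M_{\bbA}(y)=\#\{i:\sigma_i(\bbA)>y\}$, so that $pF^{\bbA\bbA^{*}}(x)=p-M_{\bbA}(\sqrt{x})$ for $x\ge 0$, and similarly for $\bbB$. From $\sigma_{i+r}(\bbB)\le\sigma_i(\bbA)$, contraposition shows that whenever $i+r\le M_{\bbB}(y)$ one has $\sigma_i(\bbA)>y$, hence $M_{\bbA}(y)\ge M_{\bbB}(y)-r$; the symmetric inequality gives $|M_{\bbA}(y)-M_{\bbB}(y)|\le r$ for all $y$. Therefore $|pF^{\bbA\bbA^{*}}(x)-pF^{\bbB\bbB^{*}}(x)|\le r$ for every $x$ (the case $x<0$ being trivial), and taking the supremum yields $\|F^{\bbA\bbA^{*}}-F^{\bbB\bbB^{*}}\|\le\tfrac{1}{p}\mathrm{rank}(\bbA-\bbB)$. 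I expect the main obstacle to be bookkeeping rather than analytic depth: one must keep the direction of the inequalities straight in the contraposition step and treat the padding zeros consistently when $p\neq n$, precisely so that the constant emerges as $1$ and not $2$.
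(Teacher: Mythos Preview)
Your argument is correct. Note, however, that the paper does not supply its own proof of this lemma: it is simply quoted as Theorem~A.44 of \cite{BS2009} and used as a black box. So there is no ``paper's proof'' to compare against beyond the standard reference. For what it is worth, your route via the Weyl interlacing inequality for singular values, $\sigma_{i+r}(\bbA)\le\sigma_i(\bbB)$ with $r=\mathrm{rank}(\bbA-\bbB)$, followed by the counting-function translation, is exactly the argument Bai and Silverstein give; your observation that the naive decomposition $\bbA\bbA^{*}-\bbB\bbB^{*}=\bbA(\bbA-\bbB)^{*}+(\bbA-\bbB)\bbB^{*}$ loses a factor of $2$ is also the standard motivation for working with singular values directly.
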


\noindent{\bf A.2 \ Proofs of main theorems}
\medskip

\cite{BS2004} established the remarkable central limit theorem for functional of eigenvalues of $\textbf{A}_n$ under the additional assumption that $E|X_{11}|^4=3$ while \cite{PZ2008} provided a supplement to this theorem by eliminating the condition to some extent. From Theorem $4$ of \cite{PZ2008} and $(2.11)$ we can directly obtain the following lemma.

\begin{lem}\label{thm1}
Under Assumptions 1 and 2, we have,
for any positive integer $k$,
\begin{equation}\label{se1}
\Big(\int cos(t_1x)dG_n(x), \ldots, \int cos(t_kx)dG_n(x), \int sin(t_1x)dG_n(x), \ldots, \int sin(t_kx)dG_n(x)\Big)
\end{equation}
converges in distribution to the Gaussian vector of the form $$(V(t_1)+\delta_1(t_1), \ldots, V(t_k)+\delta_1(t_k), Z_1+\delta_2(t_1), \ldots, Z(t_k)+\delta_2(t_k)),$$ where $\delta_1(t),\ \delta_2(t)$ are, respectively, defined as
\begin{equation}\label{a7}
\delta_1(t)=\lim\limits_{n\rightarrow\infty}\int cos(tx)dp(F^{c_n,H_n}(x)-F^{c_n}(x)),
\end{equation}
\begin{equation}\label{a8}
\delta_2(t)=\lim\limits_{n\rightarrow\infty}\int sin(tx)dp(F^{c_n,H_n}(x)-F^{c_n}(x)).
\end{equation}

The means and covariances of $V(t_j)$ and $Z(t_j)$ are specified in Theorem 1.

\end{lem}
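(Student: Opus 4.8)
The plan is to derive Lemma \ref{thm1} as a direct corollary of the Bai--Silverstein-type CLT for linear spectral statistics, as supplemented by \cite{PZ2008}, applied to the smooth test functions $f_j(x)=\cos(t_jx)$ and $g_j(x)=\sin(t_jx)$. Recall that $G_n(x)=p[F^{\bbA_n}(x)-F^{c_n}(x)]$, and that by the decomposition (\ref{a3}) we may split $\int f\,dG_n$ into the \emph{centered} piece $\int f\,d(p[F^{\bbA_n}-F^{c_n,H_n}])$ plus the \emph{deterministic shift} $\int f\,d(p[F^{c_n,H_n}-F^{c_n}])$. The first step is therefore to record that under Assumptions \ref{assu1} and \ref{assu2} (the latter providing the $\bbT_p$-structure with bounded spectral norm and the convergence $F^{\bbT_p}\to H$), the vector of centered linear statistics $\bigl(\int f_1\,d(p[F^{\bbA_n}-F^{c_n,H_n}]),\ldots,\int g_k\,d(p[F^{\bbA_n}-F^{c_n,H_n}])\bigr)$ converges in distribution to a Gaussian vector; this is precisely the content of Theorem $4$ of \cite{PZ2008} (with the extra conditions (\ref{a5})--(\ref{a4}) ensuring the relevant limiting quantities $h_1,h_2$ exist when the fourth-moment correction does not vanish). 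Since $\cos(t_jx)$ and $\sin(t_jx)$ are real-analytic on a neighbourhood of the support of $F^{c,H}$, they fall within the admissible class of test functions, and the limiting mean/covariance are obtained by evaluating the contour-integral formulas of that CLT at these particular functions — which yields exactly the $EV(t_j)$, $EZ(t_j)$ and the covariances $\mathrm{Cov}(V_j,Z_h)$, etc., as displayed in Theorem \ref{thm2}.

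Second, I would invoke the Cram\'er--Wold device: since the finite collection of cosine/sine statistics is a fixed linear functional family, joint convergence to the stated Gaussian vector follows from one-dimensional convergence of arbitrary linear combinations $\sum_j a_j\cos(t_jx)+\sum_j b_j\sin(t_jx)$, which is again a single admissible analytic test function, so the same CLT applies to it directly. This is where the reference ``(2.11)'' in the text — the identity $\underline m(z)=-\tfrac{1-c}{z}+cm(z)$ relating $m$ and $\underline m$ — is used, to translate the integrals against $F^{c,H}$ that appear natively in the Bai--Silverstein/Pan--Zhou formulas into the form written in Theorem \ref{thm2}, where everything is expressed through $\underline m$ and contours enclosing the support of $F^{c,H}$.

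Third, the deterministic shift terms must be handled: $\int\cos(tx)\,d(p[F^{c_n,H_n}-F^{c_n}])$ and its sine analogue are purely non-random, and by construction they converge to $\delta_1(t)$ and $\delta_2(t)$ respectively (this convergence is part of what is assumed/established in the quoted CLT, since $p[F^{c_n,H_n}-F^{c_n}]$ is the difference of the ``intermediate'' deterministic equivalent and the null limiting law, scaled by $p$; under Assumption \ref{assu2} with $H_n\to H$ this limit exists and is finite). Adding the deterministic limits to the Gaussian limit of the centered part, and using Slutsky together with the continuous mapping theorem (Lemma \ref{lem4}), gives convergence of (\ref{se1}) to $(V(t_1)+\delta_1(t_1),\ldots,Z(t_k)+\delta_2(t_k))$.

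The main obstacle is not conceptual but one of \emph{matching constants and conventions}: Theorem $4$ of \cite{PZ2008} and Theorem $1.1$ of \cite{BS2004} are stated for general analytic $f$ with the limiting covariance given as a double contour integral over curves enclosing the support of $F^{c,H}$, and one must carefully substitute $f(x)=\cos(tx)=\mathrm{Re}\,e^{itx}$ (respectively $\sin$), keep track of whether the fourth-moment parameter enters as $EX_{11}^4-3$ (real case) or $E|X_{11}|^4-2$ (complex case with $EX_{11}^2=0$), and verify that the auxiliary limits in (\ref{a5})--(\ref{a4}) are exactly the $h_1(z_1,z_2)$ and $h_2(z)$ appearing in the covariance and mean formulas. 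Checking that the contours can be chosen to enclose the support of $F^{c,H}$ (equivalently, that $\cos(tz),\sin(tz)$ extend analytically — which they do, being entire) and that $\gamma_1,\gamma_2$ can be taken disjoint is routine but must be stated. Once these bookkeeping identifications are made, Lemma \ref{thm1} follows immediately, and it is this lemma that feeds into the proof of Theorem \ref{thm2} via the continuous mapping theorem applied to the functional $x\mapsto\int_{T_1}^{T_2}(|x_{\cos}(t)+\delta_1(t)|^2+|x_{\sin}(t)+\delta_2(t)|^2)\,dU(t)$.
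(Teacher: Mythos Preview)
Your proposal is correct and matches the paper's approach exactly: the paper simply states that Lemma \ref{thm1} follows ``directly'' from Theorem~4 of \cite{PZ2008} together with the decomposition (\ref{a3}), and your write-up spells out precisely those steps (apply the Bai--Silverstein/Pan--Zhou CLT to the analytic test functions $\cos(t_jx),\sin(t_jx)$, then add the deterministic shifts $\delta_1,\delta_2$ via Slutsky). One small correction: the ``(2.11)'' the paper invokes is the decomposition (\ref{a3}) of $p(s_n(t)-s(t))$ into random plus non-random parts, not the Stieltjes-transform identity (\ref{stieljes}); you already use (\ref{a3}) in your first paragraph, so this is only a mislabeling, not a gap.
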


\begin{rmk}
When $\bbT_n=\bbI$, the mean and variance of the asymptotic Gaussian distribution for power functions $f(x)=x^{r}$, $\forall r\in \mathbb{Z}^{+}$ is calculated in \cite{PZ2008} and \cite{BS2004}. Hence the corresponding means and covariances for $f_1(x)=sintx$ and $f_2(x)=costx$ can be derived by Taylor series of $sintx$ and $costx$.
\end{rmk}

\begin{proof}[Proof of Theorem 1]

Let $t$ belong to a closed interval $I=[T_1,T_2]$. To finish Theorem 1, in view of Lemma \ref{lem2} and Lemma \ref{thm1}, it suffices to prove the tightness of $\{\big(\phi_n(t), \psi_n(t)\big): t\in I\}$. Thus it suffices to prove the tightness of $p(s_n(t)-s(t))$. Repeating the same truncation and centralization steps as those in \cite{BS2004}, we may assume that
\begin{equation}
|X_{ij}|<\delta_n\sqrt{n},\  EX_{ij}=0,\  E|X_{ij}|^2=1,\  E|X_{ij}|^{4}<\infty.
\end{equation}

Set $M_n(z)=n[m_{F^{\textbf{A}_n}}(z)-m_{F^{c_n,H_n}}(z)]$. By the Cauchy theorem
\begin{equation}
f(x)=-\frac{1}{2\pi i}\oint \frac{f(z)}{z-x}dz,
\end{equation}
we have, with probability one, for all $n$ large,
\begin{equation}
\int e^{itx}dp(F^{\textbf{A}_n}(x)-F^{c_n,H_n}(x))=-\frac{1}{2\pi i}\oint_{\mathcal{C}} e^{itz}M_n(z)dz.
\end{equation}

The contour $\mathcal{C}$ involved in the above integral is specified as follows. Let
\begin{equation}
\mathcal{C}_u=\{x+iv_0: x\in[x_l,x_r]\},
\end{equation}
where $v_0>0$, $x_r$ is any number greater than $\limsup\limits_{n}\lambda_{max}(\bbT_n)(1+\sqrt{c})^{2}$, $x_l$ is any negative number if $c\geq 1$ and otherwise choose $x_l\in(0,\limsup\limits_{n}\lambda_{min}(\bbT_n)(1-\sqrt{c})^{2})$. Then the contour $\mathcal{C}$ is defined by the union of $\mathcal{C}_+$ and its symmetric part $\mathcal{C}_{-}$ with respect to the $x$-axis, where
\begin{equation}
\mathcal{C}_+=\{x_l+iv: v\in[0,v_0]\}\cup\mathcal{C}_u\cup\{x_r+iv: v\in[0,v_0]\}.
\end{equation}

From Lemma 1 and the argument regarding equivalence in probability of $M_n(z)$ and its truncation version given in Page 563 in \cite{BS2004} and Lemma 3 we have
\begin{equation}\label{r1}
\oint_{\mathcal{C}}|M_n(z)||dz|\xrightarrow{D} \oint_{\mathcal{C}}|M(z)||dz|,
\end{equation}
where $M(z)$ is a Gaussian process, the limit of $M_n(z)$. 

We conclude from Lemma \ref{lem5} that for any $\delta>0$
\begin{eqnarray}\label{a11}
&&\sup_{|t_1-t_2|<\delta, t_1,t_2\in I}\big|\oint_{\mathcal{C}}(e^{it_1z}-e^{it_2z})M_n(z)dz\big|\non
&\leq& \sup_{|t_1-t_2|<\delta, t_1,t_2\in I}\Big|\oint_{\mathcal{C}}\sqrt{\big(Re(ize^{it_3z})\big)^2+\big(Im(ize^{it_4z})\big)^2}\delta | M_n(z)||dz|\Big|\non
&\leq& K\delta\Big|\oint_{\mathcal{C}}|M_n(z)||dz|\Big|
\stackrel{D}\longrightarrow K\delta\Big|\oint_{\mathcal{C}}|M(z)||dz|\Big|,\quad \text{as} \quad n\rightarrow\infty,
\end{eqnarray}
where $t_3$ and $t_4$ lies in the interval $[T_1,T_2]$, the last inequality uses (\ref{r1}) and the fact that $Re(ize^{it_3z})$, $Im(ize^{it_4z})$ are bounded on the contour $\mathcal{C}$ and $K$ (and in the sequel) is a constant number which may be different from line to line.

By (\ref{a11}), we have for any $\varepsilon>0$,
\begin{eqnarray}\label{a12}
P\Big(\sup_{|t_1-t_2|<\delta, t_1,t_2\in [0,1]}\Big|\oint_{\mathcal{C}}(e^{it_1z}-e^{it_2z})M_n(z)dz\Big|\geq\varepsilon\Big)
\leq P\Big(K\delta\Big|\oint_{\mathcal{C}}|M_n(z)||dz|\Big|\geq\varepsilon\Big)
\end{eqnarray}
and
\begin{eqnarray}\label{a13}
\lim_{\delta\rightarrow 0}\limsup_{n\rightarrow \infty}P\Big(K\delta\Big|\oint_{\mathcal{C}}|M_n(z)||dz|\Big|\geq\varepsilon\Big)
=\lim_{\delta\rightarrow 0}P\Big(K\delta\Big|\oint_{\mathcal{C}}|M(z)||dz|\Big|\geq\varepsilon\Big)=0.
\end{eqnarray}
Hence (\ref{a12}) and (\ref{a13}) imply that
\begin{eqnarray}\label{a20}
\lim_{\delta\rightarrow 0}\limsup_{n\rightarrow \infty}P\Big(\sup_{|t_1-t_2|<\delta, t_1,t_2\in I}\Big|\oint_{\mathcal{C}}(e^{it_1z}-e^{it_2z}) M_n(z)dz\Big|\geq\varepsilon\Big)=0.
\end{eqnarray}

By Theorem $7.3$ of \cite{Bill1999}, $\int e^{itx}dp(F^{\textbf{A}_n}(x)-F^{c_n,H_n}(x))$ is tight. Moreover from the assumption we see that $\int e^{itx}dp(F^{c_n,H_n}(x)-F^{c_n}(x))$ is tight by Lemma 4.

\end{proof}

To prove Theorem 2, we first need to establish Lemma \ref{thm3} below. To this end, write
\begin{equation}
H_n(x)=p[F^{\bbS_n}(x)-F^{c_n}(x)],
\end{equation}
where $\bbS_n$ is defined in $(4.4)$.

\begin{lem}\label{thm3}
Under the assumptions of Theorem 2, we have
for any positive integer $k$,
\begin{eqnarray}
\Big(\int cos(t_1x)dH_n(x), \ldots, \int cos(t_kx)dH_n(x), \int sin(t_1x)dH_n(x), \ldots, \int sin(t_kx)dH_n(x)\Big)
\end{eqnarray}
converges in distribution to a Gaussian vector $(W(t_1),\ldots,W(t_k),Q(t_1),\ldots,Q(t_k))$ whose mean and covariance function are given in Theorem 2.
\end{lem}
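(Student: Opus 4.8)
The plan is to mimic the strategy used for Lemma \ref{thm1} and Theorem \ref{thm2}, but now for the covariance matrix $\bbS_n=\frac1n\bbv\bbv^T=\frac1n(\boldsymbol\varepsilon+\bbu\bbe^T)(\boldsymbol\varepsilon+\bbu\bbe^T)^T$ of the general panel model (\ref{panel1}). The first step is a reduction: since $\mathrm{rank}(\bbu\bbe^T)\le 1$, Lemma \ref{lem6} shows that $\|F^{\bbS_n}-F^{\frac1n\boldsymbol\varepsilon\boldsymbol\varepsilon^T}\|\le \frac1p$, so the \emph{limiting} ESD is the M-P law; however, for the CLT we cannot simply discard the rank-one perturbation because it contributes at the $O(1/p)$ scale to $H_n(x)=p[F^{\bbS_n}(x)-F^{c_n}(x)]$, which is exactly the scale we are tracking. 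So I would instead decompose, via the Cauchy/Stieltjes representation as in the proof of Theorem \ref{thm2},
\[
\int e^{itx}\,dH_n(x) = -\frac{1}{2\pi i}\oint_{\mathcal C} e^{itz}\, p\big[m_{F^{\bbS_n}}(z)-m_{F^{c_n}}(z)\big]\,dz,
\]
and split $p[m_{F^{\bbS_n}}-m_{F^{c_n}}] = p[m_{F^{\bbS_n}}-m_{F^{\frac1n\boldsymbol\varepsilon\boldsymbol\varepsilon^T}}] + p[m_{F^{\frac1n\boldsymbol\varepsilon\boldsymbol\varepsilon^T}}-m_{F^{c_n}}]$.

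The second term is handled by Lemma \ref{thm1} / the Bai--Silverstein CLT with $\bbT_n=\bbI$: it contributes the Gaussian process with the ``M-P part'' of the mean (the first two lines of (\ref{panel12})) and the covariance (\ref{panel13}). The real work is the first term, the rank-one correction $p[m_{F^{\bbS_n}}(z)-m_{F^{\frac1n\boldsymbol\varepsilon\boldsymbol\varepsilon^T}}(z)]$. Here I would use the resolvent identity: writing $\bbS_n=\frac1n\boldsymbol\varepsilon\boldsymbol\varepsilon^T + \frac1n(\bbu\bbe^T\boldsymbol\varepsilon^T+\boldsymbol\varepsilon\bbe\bbu^T) + \frac1n\|\bbe\|^2\,\bbu\bbu^T$, and expanding $(\bbS_n-z)^{-1}-(\frac1n\boldsymbol\varepsilon\boldsymbol\varepsilon^T-z)^{-1}$ by the Sherman--Morrison / Woodbury formula for a perturbation of rank at most $2$. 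Taking traces and dividing by nothing (the factor $p$ against the $1/p$-size perturbation is $O(1)$), the leading contribution should involve quadratic forms $\bbu^T(\frac1n\boldsymbol\varepsilon\boldsymbol\varepsilon^T-z)^{-1}\bbu$ and $\bbu^T(\frac1n\boldsymbol\varepsilon\boldsymbol\varepsilon^T-z)^{-2}\bbu$; using the independence of $\bbu$ from $\boldsymbol\varepsilon$, the condition $E\|\bbu\|^4<\infty$, and the decorrelation hypothesis $\frac{1}{p^2}E\big[\sum_{i\ne j}(u_i^2-\bar u)(u_j^2-\bar u)\big]\to 0$ from (\ref{asu}), these quadratic forms concentrate around $\bar u\cdot \frac1p\mathrm{tr}(\frac1n\boldsymbol\varepsilon\boldsymbol\varepsilon^T-z)^{-1} \to \bar u\, m_{F^{MP}}(z)$ and $\bar u\int (\lambda-z)^{-2}dF^{MP}(\lambda)$ respectively. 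After collecting terms, a geometric-series resummation in $\bar u z\underline m^2(z)$ produces the denominator $\bar u z\underline m^2(z)-1$ appearing in the last line of (\ref{panel12}); this deterministic term is the new contribution $\delta$-type term that Theorem \ref{thm4}'s remark attributes to ``the last term on the right hand of (\ref{a3})''. The upshot is that the rank-one term is asymptotically \emph{deterministic} (it adds to the mean, contributing the third and fourth lines of (\ref{panel12})) and contributes nothing to the limiting covariance, which therefore coincides with the M-P covariance (\ref{panel13}).

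Once finite-dimensional convergence is in hand, I would finish exactly as in the proof of Theorem \ref{thm2}: establish tightness of $\{(\int\cos(tx)dH_n(x), \int\sin(tx)dH_n(x)): t\in[t_1,t_2]\}$ by the contour argument --- write the increment $\int(e^{it_1z}-e^{it_2z})\,p[m_{F^{\bbS_n}}-m_{F^{c_n}}]\,dz$, bound it via Lemma \ref{lem5} (complex mean value theorem) by $K\delta\oint_{\mathcal C}|p[m_{F^{\bbS_n}}(z)-m_{F^{c_n}}(z)]|\,|dz|$, and use that this contour integral converges in distribution (hence is tight) together with Lemma \ref{lem3} and Theorem 7.3 of \cite{Bill1999}. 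Combining tightness with the finite-dimensional convergence via Lemma \ref{lem2} gives process-level convergence, and reading off the mean and covariance of the limit gives the stated $(W(t),Q(t))$.

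I expect the main obstacle to be the control of the random quadratic forms in $\bbu$ against the resolvent of $\frac1n\boldsymbol\varepsilon\boldsymbol\varepsilon^T$: one must show that the fluctuations of $\bbu^T(\frac1n\boldsymbol\varepsilon\boldsymbol\varepsilon^T-z)^{-1}\bbu$ around $\bar u\,m_{F^{MP}}(z)$ are $o(1)$ \emph{uniformly on the contour} $\mathcal C$, which is precisely where the second part of (\ref{asu}) is used, and to verify that no further cross terms from the rank-$2$ perturbation (the mixed terms $\frac1n\bbu\bbe^T\boldsymbol\varepsilon^T$) survive at order $p^{-1}$ after taking the trace --- these involve $\frac1n\bbe^T\boldsymbol\varepsilon^T(\cdots)\boldsymbol\varepsilon\,$-type quantities that need a separate concentration estimate. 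Getting the bookkeeping of these corrections right --- so that the resummation yields exactly the denominator $\bar u z\underline m^2(z)-1$ and the numerator in the last line of (\ref{panel12}) --- is the delicate computational heart of the proof.
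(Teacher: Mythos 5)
Your overall strategy is the right one and is essentially the paper's: pass to resolvents by Cauchy's formula, peel off the low--rank part of $\bbS_n$ by Sherman--Morrison/Woodbury, show that the resulting quadratic--form corrections converge uniformly on the contour to deterministic functions (hence shift only the mean), invoke a known CLT for the base term, and finish with the contour/complex--mean--value tightness argument. The one organizational difference is the choice of base matrix. You perturb directly off $\bbD_n=\frac{1}{n}\boldsymbol\varepsilon\boldsymbol\varepsilon^{T}$ with a rank--two update. The paper first observes that centering kills the common factor, $\bbv_j-\bar\bbv=\boldsymbol\varepsilon_j-\bar{\boldsymbol\varepsilon}$, so that $\mathcal{S}_n=\mathcal{D}_n$ and $\bbS_n=\mathcal{D}_n+\bar\bbv\bar\bbv^{T}$ is a rank--\emph{one} perturbation of the \emph{centered} noise covariance; it then invokes the CLT for $tr\,\mathcal{D}_n^{-1}(z)-pm_{c_n}(z)$ from \cite{P2011} and re--expands the $\mathcal{D}_n$--resolvents in terms of $\bbD_n$--resolvents anyway. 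The two routes are algebraically equivalent: your rank--two perturbation factors as $\bar\bbv\bar\bbv^{T}-\bar{\boldsymbol\varepsilon}\bar{\boldsymbol\varepsilon}^{T}$, and applying Sherman--Morrison twice reproduces exactly the paper's two correction terms, namely $\frac{\bar{\boldsymbol\varepsilon}^{T}\bbD_n^{-2}(z)\bar{\boldsymbol\varepsilon}}{1-\bar{\boldsymbol\varepsilon}^{T}\bbD_n^{-1}(z)\bar{\boldsymbol\varepsilon}}$ (which yields the third line of (\ref{panel12})) and $-\frac{\bar\bbv^{T}\mathcal{D}_n^{-2}(z)\bar\bbv}{1+\bar\bbv^{T}\mathcal{D}_n^{-1}(z)\bar\bbv}$ (which yields the last line), while the cross terms $\bbu^{T}\bbD_n^{-k}(z)\bar{\boldsymbol\varepsilon}$ vanish in probability exactly as you anticipate. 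So your attribution of both extra mean terms to the perturbation, and of the covariance entirely to the Bai--Silverstein part, is consistent with the paper.

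Two technical points you should not skate over. First, the contour: because of the spike $\bar\bbv\bar\bbv^{T}$, whose operator norm $\bar\bbv^{T}\bar\bbv=\bar{\boldsymbol\varepsilon}^{T}\bar{\boldsymbol\varepsilon}+\bbu^{T}\bbu+2\bbu^{T}\bar{\boldsymbol\varepsilon}$ converges a.s.\ to $c+\bar u$, the matrix $\bbS_n$ can have an eigenvalue well outside the M--P bulk. The Cauchy--formula identity you start from is only valid once all eigenvalues lie inside the contour a.s.\ for large $n$, so the contour must be taken with right endpoint exceeding $c+\bar u+(1+\sqrt{c})^2$; the paper establishes the a.s.\ limits of $\lambda_{\max}(\bar\bbv\bar\bbv^{T})$ and $\lambda_{\max}(\mathcal{D}_n)$ precisely for this purpose, and simply reusing the Theorem~\ref{thm2}--type contour enclosing only the M--P support would invalidate your first display. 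Second, ``uniformly on the contour'' requires the usual truncation of the processes near the real axis on the vertical edges of the contour (where the resolvent is unbounded) together with second--moment increment bounds in $z$ to get tightness of the quadratic--form processes via Lemma~\ref{lem3}; this is where $E\|\bbu\|^4<\infty$ and the second half of (\ref{asu}) are actually consumed. With these two items supplied, your proposal coincides with the paper's proof.
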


\begin{proof}[Proof of Lemma \ref{thm3}]

Using the same truncation and centralization steps as those used in the paper by \cite{BS2004}, we may assume that
\begin{equation}
|\varepsilon_{ij}|<\delta_n\sqrt{n},\  E\varepsilon_{ij}=0,\  E|\varepsilon_{ij}|^2=1,\  E|\varepsilon_{ij}|^{4}<\infty.
\end{equation}

For the panel data model (4.1) proposed in Section 4, let
\begin{eqnarray}
\bbv_j=(v_{1j},\ldots,v_{pj})^{T}, \ \boldsymbol\varepsilon_j=(\boldsymbol\varepsilon_{1j},\ldots,\boldsymbol\varepsilon_{pj})^{T}, \ \bbu=(\frac{1}{\sqrt{p}}u_1,\ldots,\frac{1}{\sqrt{p}}u_p)^{T},\ j=1,\ldots,n.
\end{eqnarray}
The model can be written in the vector form as
\begin{equation}\label{r4}
\bbv_j=\boldsymbol\varepsilon_j+\bbu,\ j=1,\ldots,n.
\end{equation}

We then define the sample covariance matrix by $\bbS_n=\frac{1}{n}\sum\limits_{j=1}^n\bbv_j\bbv_j^T$. Moreover write
\begin{eqnarray}
\bar\bbv=\frac{1}{n}\sum^{n}_{j=1}\bbv_j,\ \bar{\boldsymbol\varepsilon}=\frac{1}{n}\sum^{n}_{j=1}\boldsymbol\varepsilon_j,
\end{eqnarray}
and
\begin{equation}
\bbD_n=\frac{1}{n}\sum^{n}_{j=1}\boldsymbol\varepsilon_j\boldsymbol\varepsilon_j^{T}, \ \
\mathcal{S}_n=\frac{1}{n}\sum^{n}_{j=1}(\bbv_j-\bar\bbv)(\bbv_j-\bar\bbv)^{T},\ \
\mathcal{D}_n=\frac{1}{n}\sum^{n}_{j=1}(\boldsymbol\varepsilon_j-\bar{\boldsymbol\varepsilon})(\boldsymbol\varepsilon_j-\bar{\boldsymbol\varepsilon})^{T}.
\end{equation}

Note that $\mathcal{S}_n=\mathcal{D}_n$. The sample covariance matrix $\bbS_n$ can be then expressed as
\begin{eqnarray}\label{panel14}
\bbS_n=\mathcal{S}_n+\bar\bbv\bar\bbv^{T}=\mathcal{D}_n+\bar\bbv\bar\bbv^{T}.
\end{eqnarray}

By the conditions of Theorem 2 and the Burkholder inequality we have
$$
E|\bbu^{T}\bar{\boldsymbol\varepsilon}|^4\leq \frac{1}{n^4}E|\sum^{n}_{j=1}\bbu^{T}\boldsymbol\varepsilon_j|^4\leq \frac{K}{n^4}E|\sum^{n}_{j=1}\bbu^{T}\bbu|^2+
\frac{K}{n^4}\sum^{n}_{j=1}E|\bbu^{T}\boldsymbol\varepsilon_j|^4=O(\frac{1}{n^2}),
$$
which, together with Borel-Cantelli's Lemma, implies that
$$
\bbu^{T}\bar{\boldsymbol\varepsilon}\stackrel{a.s.}\longrightarrow 0.
$$

Also, the conditions of Theorem 2 imply that
\begin{equation}
\bbu^{T}\bbu\rightarrow \bar{u}.\label{h1}
\end{equation}

Therefore, by the conditions of Theorem 2 and Theorem $2$ of \cite{PZ2011}, we have, as $n\rightarrow\infty$,
\begin{eqnarray}\label{panel15}
\lambda_{max}(\bar\bbv\bar\bbv^{T})=\bar\bbv^{T}\bar\bbv=\bar{\boldsymbol\varepsilon}^{T}\bar{\boldsymbol\varepsilon}+\bbu^{T}\bbu+2\bbu^{T}\bar{\boldsymbol\varepsilon}
\stackrel{a.s.}\rightarrow c+\bar u, \ \ as \ n\rightarrow\infty.
\end{eqnarray}

Furthermore, \cite{Jiang2004} proves that
\begin{equation}\label{panel17}
\lambda_{max}(\mathcal{D}_n)\rightarrow (1+\sqrt{c})^2, \ a.s.\ as\ n\rightarrow\infty
\end{equation}
and \cite{XZ2010} implies that, when $c\leq 1$
\begin{eqnarray}\label{panel18}
\lambda_{min}(\mathcal{D}_n)\rightarrow (1-\sqrt{c})^2, \ a.s.\ as\ n\rightarrow\infty.
\end{eqnarray}

By (\ref{panel15}) (\ref{panel17}) and (\ref{panel18}),  the maximal and minimal eigenvalues of $\bbS_n$ satisfy with probability one
\begin{eqnarray}
\limsup_{n\rightarrow\infty}\lambda_{max}(\bbS_n)\leq c+\bar u+(1+\sqrt{c})^2,
\end{eqnarray}
and
\begin{eqnarray}
\liminf_{n\rightarrow\infty}\lambda_{min}(\bbS_n)\geq(1-\sqrt{c})^2.
\end{eqnarray}

As in the proof of Theorem $2$, we obtain from Cauchy's formula, with probability one, for $n$ large,
\begin{eqnarray}\label{panel11}
p\int f(x)d[F^{\bbS_n}(x)-F^{c_n}(x)]&=&\frac{p}{2\pi i}\int\oint_{\gamma}\frac{f(z)}{z-x}dz d[F^{\bbS_n}(x)-F^{c_n}(x)]\non
&=&\frac{p}{2\pi i}\oint_{\gamma}f(z)dz\int\frac{1}{z-x}d[F^{\bbS_n}(x)-F^{c_n}(x)]\non
&=&-\frac{1}{2\pi i}\oint_{\gamma}f(z)(tr(\bbS_n-z\bbI_p)^{-1}-pm_{c_n}(z))dz,
\end{eqnarray}
where $m_{c_n}(z)$ is obtained from $m(z)$ with $c$ replaced by $c_n$. The contour $\gamma$ is specified as follows: Let $v_0>0$ be arbitrary and set $\gamma_{\mu}=\{\mu+iv_0, \mu\in[\mu_{\ell},\mu_r]\}$,
where $\mu_r>c+\bar u+(1+\sqrt{c})^2$ and $0<\mu_{\ell}<I_{(0,1)}(c)(1-\sqrt{c})^2$ or $\mu_{\ell}$ is any negative number if $c\geq 1$. Then define
\begin{eqnarray}
\gamma^{+}=\{\mu_{\ell}+iv: v\in[0,v_0]\}\cup\gamma_{\mu}\cup\{\mu_r+iv: v\in[0,v_0]\}
\end{eqnarray}
and let $\gamma^{-}$ be the symmetric part of $\gamma^{+}$ about the real axis. Then set $\gamma=\gamma^{+}\cup\gamma^{-}$.

Set
\begin{eqnarray}
&&\mathcal{S}_n^{-1}(z)=(\mathcal{S}_n-z\bbI_p)^{-1}, \ \bbS_n^{-1}(z)=(\bbS_n-z\bbI_p)^{-1},\non
&&\mathcal{D}_n^{-1}(z)=(\mathcal{D}_n-z\bbI_p)^{-1}, \ \bbD_n^{-1}(z)=(\bbD_n-z\bbI_p)^{-1}.
\end{eqnarray}
Then we have
\begin{eqnarray}\label{panel2}
tr\bbS_n^{-1}(z)-pm_{c_n}(z)=\big(tr\mathcal{D}_n^{-1}(z)-pm_{c_n}(z)\big)-\frac{\bar\bbv^{T}\mathcal{D}_n^{-2}(z)\bar\bbv}{1+\bar\bbv^{T}\mathcal{D}_n^{-1}(z)\bar\bbv},
\end{eqnarray}
where we have used the identity
\begin{eqnarray}\label{b5}
(\bbC+\bbr\bbr^{T})^{-1}=\bbC^{-1}-\frac{\bbC^{-1}\bbr\bbr^{T}\bbC^{-1}}{1+\bbr^{T}\bbC^{-1}\bbr},
\end{eqnarray}
where $\bbC$ and $(\bbC+\bbr\bbr^{T})$ are both invertible; and $\bbr\in\mathbb{R}^{p}$. The first term on the right hand of (\ref{panel2}) was investigated in \cite{P2011}. In what follows we consider the second term on the right hand of (\ref{panel2}).

One may verify that
\begin{equation}
(\bbC+q\bbr\bbv^T)^{-1}=\frac{\bbC^{-1}}{1+q\bbv^T\bbC^{-1}\bbr},\label{r3}
\end{equation}
where  $\bbC$ and $(\bbC+q\bbr\bbv^T)$ are both invertible, $q$ is a scalar and $\bbr,\bbv\in\mathbb{R}^{p}$. This, together with (\ref{r4}) and (\ref{b5}), yields
\begin{eqnarray}\label{panel3}
\bar\bbv^{T}\mathcal{D}_n^{-1}(z)\bar\bbv&=&\bar{\boldsymbol\varepsilon}^{T}\mathcal{D}_n^{-1}(z)\bar{\boldsymbol\varepsilon}+2\bbu^{T}\mathcal{D}_n^{-1}(z)\bar{\boldsymbol\varepsilon}
+\bbu^{T}\mathcal{D}_n^{-1}(z)\bbu\non
&=&\frac{\bar{\boldsymbol\varepsilon}^{T}\bbD_n^{-1}(z)\bar{\boldsymbol\varepsilon}}{1-\bar{\boldsymbol\varepsilon}^{T}\bbD_n^{-1}(z)\bar{\boldsymbol\varepsilon}}
+2\frac{\bbu^{T}\bbD_n^{-1}(z)\bar{\boldsymbol\varepsilon}}
{1-\bar{\boldsymbol\varepsilon}\bbD_n^{-1}(z)\bar{\boldsymbol\varepsilon}}+\bbu^{T}\mathcal{D}_n^{-1}(z)\bbu
\end{eqnarray}
and
\begin{eqnarray}\label{panel4a}
&&\bar\bbv^{T}\mathcal{D}_n^{-2}(z)\bar\bbv=\bar{\boldsymbol\varepsilon}^{T}\mathcal{D}_n^{-2}(z)\bar{\boldsymbol\varepsilon}+2\bbu^{T}\mathcal{D}_n^{-2}(z)\bar{\boldsymbol\varepsilon}
+\bbu^{T}\mathcal{D}^{-2}_n(z)\bbu
\label{panel4}\\
&=&\frac{\bar{\boldsymbol\varepsilon}^{T}\bbD_n^{-2}(z)\bar{\boldsymbol\varepsilon}}{(1-\bar{\boldsymbol\varepsilon}
\bbD_n^{-1}(z)\bar{\boldsymbol\varepsilon})^2}+\frac{2\bbu^{T}\bbD_n^{-2}(z)\bar{\boldsymbol\varepsilon}}{1-\bar{\boldsymbol\varepsilon}^{T}
\bbD_n^{-1}(z)\bar{\boldsymbol\varepsilon}}
+\frac{2\bbu^{T}\bbD_n^{-1}(z)\bar{\boldsymbol\varepsilon}\bar{\boldsymbol\varepsilon}^{T}\bbD_n^{-2}(z)\bar{\boldsymbol\varepsilon}}
{(1-\bar{\boldsymbol\varepsilon}^{T}\bbD_n^{-1}(z)\bar{\boldsymbol\varepsilon})^2}+\bbu^{T}\mathcal{D}^{-2}(z)\bbu.
\nonumber
\end{eqnarray}

It is proved in Section 2.5 and (4.3) of \cite{P2011} that as $n\rightarrow\infty$,
\begin{eqnarray}\label{panel5}
\sup_{z\in\gamma}\Big|\bar{\boldsymbol\varepsilon}^{T}\bbD_n^{-2}(z)\bar{\boldsymbol\varepsilon}-\frac{c\underline{m}^2(z)}{(1+\underline{m}(z))^2-c\underline{m}^2(z)}\Big|
\xrightarrow{i.p.} 0;
\end{eqnarray}
\begin{eqnarray}\label{panel6}
\sup_{z\in\gamma}\Big|\bar{\boldsymbol\varepsilon}^{T}\bbD_n^{-1}(z)\bar{\boldsymbol\varepsilon}-(1+z\underline{m}(z))\Big|\xrightarrow{i.p.} 0;
\end{eqnarray}
and
\begin{eqnarray}\label{panel7}
\sup_{z\in\gamma}\Big|\bbu^{T}\bbD_n^{-1}(z)\bar{\boldsymbol\varepsilon}\Big|\xrightarrow{i.p.} 0,
\end{eqnarray}
where we have also used an argument similar to (2.28) of \cite{P2011}).

By Lemma $2$ of \cite{BMP2007}, (3.4) and (4.3) in \cite{P2011}), and $(4.5)$ we have, as $n\rightarrow\infty$,
\begin{eqnarray}\label{panel8}
\sup_{z\in\gamma}\Big|\bbu^{T}\mathcal{D}_n^{-1}(z)\bbu-\bar u\underline{m}(z)\Big|\xrightarrow{i.p.} 0.
\end{eqnarray}

The next aim is to prove that
\begin{eqnarray}\label{panel9}
\sup_{z\in\gamma}\Big|\bbu^{T}\mathcal{D}_n^{-2}(z)\bbu-\bar u\int\frac{1}{(\lambda-z)^2}dF^{MP}(\lambda)\Big|\xrightarrow{i.p.} 0
\end{eqnarray}
and that
\begin{eqnarray}\label{panel7*}
\sup_{z\in\gamma}\Big|\bbu^{T}\bbD_n^{-2}(z)\bar{\boldsymbol\varepsilon}\Big|\xrightarrow{i.p.} 0.
\end{eqnarray}

Consider (\ref{panel9}) first. By the formula (\ref{b5}), we have an expansion
\begin{eqnarray*}
\bbu^{T}\mathcal{D}_n^{-2}(z)\bbu=\bbu^{T}\bbD^{-2}_n(z)\bbu+\frac{2\bbu^{T}\bbD^{-2}_n(z)\bar{\boldsymbol{\varepsilon}}\bar{\boldsymbol{\varepsilon}}^{T}
\bbD^{-1}_n(z)\bbu}{1-\bar{\boldsymbol{\varepsilon}}^{T}\bbD^{-1}_n(z)\bar{\boldsymbol{\varepsilon}}}+\frac{\bbu^{T}\bbD_n^{-1}(z)\bar{\boldsymbol{\varepsilon}}\bar{\boldsymbol{\varepsilon}}^{T}
\bbD^{-2}_n(z)\bar{\boldsymbol{\varepsilon}}\bar{\boldsymbol{\varepsilon}}^{T}\bbD_n^{-1}(z)\bbu}{(1-\bar{\boldsymbol{\varepsilon}}^{T}\bbD^{-1}_n(z)\bar{\boldsymbol
{\varepsilon}})^2}.
\end{eqnarray*}

For any given $z\in\gamma$, we conclude from Theorem $1$ of \cite{P2011} and Helly-Bray's theorem that
\begin{eqnarray}
\bbu^{T}\mathcal{D}^{-2}_n(z)\bbu-\bar u\int\frac{1}{(\lambda-z)^2}dF^{MP}(\lambda)\stackrel{i.p.}\longrightarrow 0\quad \text{as}\quad n\rightarrow\infty.
\end{eqnarray}

By the expansion of $\bbu^{T}\mathcal{D}^{-2}_n(z)\bbu$ and (\ref{panel5})-(\ref{panel7}), to prove (\ref{panel9}), it suffices to prove the tightness of $\Big\{K_n^{(1)}(z)=\bbu^{T}\bbD^{-2}_n(z)\bbu-\bar u\int\frac{1}{(\lambda-z)^2}dF^{MP}(\lambda), z\in\gamma\Big\}$ and $\Big\{\bbu^{T}\bbD^{-2}_n(z)\bar{\boldsymbol{\varepsilon}}, z\in\gamma\Big\}$.

To this end,  as in \cite{BS2004}, below introduce the truncated version of $\bbu^{T}\bbD_n^{-2}(z)\bbu$. Define $\gamma_{r}=\{\mu_r+iv: v\in [n^{-1}\rho_n, v_0]\}$,
\begin{equation}
\gamma_{\ell}=\left\{\begin{array}{cc}
                     \{\mu_{\ell}+iv: v\in [n^{-1}\rho_n, v_0]\}, & \mu_{\ell}>0, \\
                     \{\mu_{\ell}+iv: v\in [0, v_0]\}, & \mu_{\ell}<0,
                   \end{array}
\right.
\end{equation}
where
\begin{eqnarray}
\rho_{n}\downarrow 0, \ \ \rho_{n}\geq n^{-\theta}, \ for\ some\ \theta\in (0,1).
\end{eqnarray}

Let $\gamma_n^{+}=\gamma_{\ell}\cup\gamma_{\mu}\cup\gamma_r$ and $\gamma_n^{-}$ denote the symmetric part of $\gamma_n^{+}$ with respect to the real axis. We then define the truncated process $\widehat{\bbu^{T}\bbD_n^{-2}(z)\bbu}$ of the process $\bbu^{T}\bbD_n^{-2}(z)\bbu$ for $z=\alpha+iv$ by
\begin{eqnarray}\label{r7}
\widehat{\bbu^{T}\bbD_n^{-2}(z)\bbu}
=\left\{\begin{array}{cc}
        \bbu^{T}\bbD_n^{-2}(z)\bbu & z\in\gamma_n=\gamma^{+}_n\cup\gamma^{-}_n, \\
         \frac{nv+\rho_n}{2\rho_n}\bbu^{T}\bbD_n^{-2}(z_{r_1})\bbu+ \frac{\rho_n-nv}{2\rho_n}\bbu^{T}\bbD_n^{-2}(z_{r_2})\bbu & \mu=\mu_r, v\in [-n^{-1}\rho_n, n^{-1}\rho_n],\\
         \frac{nv+\rho_n}{2\rho_n}\bbu^{T}\bbD_n^{-2}(z_{\ell_1})\bbu+ \frac{\rho_n-nv}{2\rho_n}\bbu^{T}\bbD_n^{-2}(z_{\ell_2})\bbu & \mu=\mu_{\ell}>0, v\in [-n^{-1}\rho_n, n^{-1}\rho_n],
       \end{array}
\right.
\end{eqnarray}
where $z_{r_1}=\mu_r+in^{-1}\rho_n$, $z_{r_2}=\mu_r-in^{-1}\rho_n$, $z_{\ell_1}=\mu_{\ell}+in^{-1}\rho_n$ and $z_{\ell_2}=\mu_{\ell}-in^{-1}\rho_n$. We then have
\begin{equation}\label{r8}
\sup_{z\in\gamma}\Big|\widehat{\bbu^{T}\bbD_n^{-2}(z)\bbu}-\bbu^{T}\bbD_n^{-2}(z)\bbu\Big| \leq K\rho_n\|\bbu\|^2(\frac{1}{|\lambda_{\max}(\bbD_n)-\mu_r|}+\frac{1}{|\lambda_{\min}(\bbD_n)-\mu_l|})\stackrel{i.p.}\longrightarrow 0.
\end{equation}

It is proved in Section $3$ of \cite{BS2004} that, for any positive integer $k$ and $z\in\gamma_n^{+}\cup\gamma_n^{-}$,
\begin{eqnarray}
max\big(E||D^{-1}_n(z)||^k\big)\leq K.
\end{eqnarray}
It follows from independence between $\bbu$ and $\boldsymbol\varepsilon_j,j=1,\cdots,n$ that
\begin{eqnarray}\label{a15}
&&E\big|\bbu^{T}\bbD_n^{-2}(z)\bbu-\bar u\int\frac{1}{(\lambda-z)^2}dF^{MP}(\lambda)\big|\non
&\leq& E|\bbu^{T}\bbD^{-2}_n(z)\bbu|+\Big|\bar u \int\frac{1}{(\lambda-z)^2}dF^{MP}(\lambda)\Big|\non
&\leq& E||\bbu^{T}||^2E||\bbD_n^{-2}(z)||^2+K\leq K,
\end{eqnarray}
which ensures Condition $(1)$ of Lemma \ref{lem3}. Similarly, we can derive $E|\bbu^{T}\bbD_n^{-2}(z)\bar{\boldsymbol{\varepsilon}}|^2\leq K$.

Next, we prove condition $(2)$ of Lemma \ref{lem3}, i.e.
\begin{eqnarray}\label{a14}
\sup_{n,z_1,z_2\in\gamma_n^{+}\cup\gamma_n^{-}}\frac{E|K^{(i)}_n(z_1)-K^{(i)}_n(z_2)|^2}{|z_1-z_2|^2}<\infty, \quad i=1,2.
\end{eqnarray}

Note that
\begin{eqnarray}
\bbA^{-1}-\bbB^{-1}=\bbA^{-1}(\bbB-\bbA)\bbB^{-1},
\end{eqnarray}
where $\bbA$ and $\bbB$ are any two nonsingular matrices. We then conclude that
\begin{equation}\label{r5}
\bbD^{-2}_n(z_1)-\bbD^{-2}_n(z_2)=(z_1-z_2)\bbD^{-2}_n(z_1)\bbD_n^{-1}(z_2)+(z_1-z_2)\bbD^{-1}_n(z_1)\bbD_n^{-2}(z_2).
\end{equation}

Then
\begin{eqnarray}
\frac{K^{(1)}_n(z_1)-K^{(1)}_n(z_2)}{z_1-z_2}&=&\bbu^{T}\bbD^{-2}_n(z_1)\bbD^{-1}_n(z_2)\bbu+\bbu^{T}\bbD^{-1}_n(z_1)\bbD^{-2}_n(z_2)\bbu\non
&&-\bar u\int\frac{(\lambda-z_1)+(\lambda-z_2)}{(\lambda-z_1)^2(\lambda-z_2)^2}dF^{MP}(\lambda).
\end{eqnarray}

As in (\ref{a15}), we can obtain
\begin{eqnarray}\label{a16}
E|\bbu^{T}\bbD^{-2}_n(z_1)\bbD^{-1}_n(z_2)\bbu|^2\leq K, \quad E|\bbu^{T}\bbD_n^{-1}(z_1)\bbD^{-2}_n(z_2)\bbu|^2\leq K.
\end{eqnarray}

Since $f(\lambda)=\frac{(\lambda-z_1)+(\lambda-z_2)}{(\lambda-z_1)^2(\lambda-z_2)^2}$ is a continuous function when $z_1,z_2\in\gamma$, the integral
$\int f(\lambda)dF^{MP}(\lambda)$ is bounded. This, together with (\ref{a16}), implies
\begin{equation}\label{r6}
\sup_{n,z_1,z_2\in\gamma}\frac{E|K^{(1)}_n(z_1)-K^{(1)}_n(z_2)|^2}{|z_1-z_2|^2}<\infty.
\end{equation}

By (3.17) in \cite{PZ2011} and an argument similar to (\ref{r5})-(\ref{r6})  we may verify that $\bbu^{T}\bbD_n^{-2}(z)\bar{\boldsymbol\varepsilon}$ is tight for $z\in\gamma$.
Summarizing the above we obtain (\ref{panel9}).

Consider (\ref{panel7*}) now.  From the last paragraph we see that it is enough to consider the pointwise convergence of $\bbu^{T}\bbD_n^{-2}(z)\bar{\boldsymbol\varepsilon}$.  As in (\ref{r7}) and (\ref{r8}) we may define the truncated process $\widehat{\bbu^{T}\bbD_n^{-2}(z)\bar{\boldsymbol\varepsilon}}$ of the process $\bbu^{T}\bbD_n^{-2}(z)\bar{\boldsymbol\varepsilon}$ and then prove that their difference tends to zero in probability. As in (4.3) of \cite{PZ2011} one may prove that for given $z\in \gamma_n^+$ $\bbu^{T}\bbD_n^{-2}(z)\bar{\boldsymbol\varepsilon}\stackrel{i.p.}\longrightarrow 0.$

From (\ref{panel3}) to (\ref{panel7*}) we have
\begin{eqnarray}
\sup_{z\in\gamma}\Big|\bar\bbv^{T}\mathcal{D}_n^{-1}(z)\bar\bbv-\big(\frac{1+z\underline{m}(z)}{-z\underline{m}(z)}+\bar u\underline{m}(z)\big)\Big|\xrightarrow{i.p.} 0,
\end{eqnarray}
and
\begin{eqnarray}
\sup_{z\in\gamma}\Big|\bar\bbv^{T}\mathcal{D}_n^{-2}(z)\bar\bbv-\big(\frac{c}{z^2[(1+\underline{m}(z))^2-c\underline{m}^2(z)]}
+\bar u\int\frac{1}{(\lambda-z)^2}dF^{MP}(\lambda)\big)\Big|\xrightarrow{i.p.} 0.
\end{eqnarray}

We then conclude from Slutsky's theorem that
\begin{eqnarray}\label{panel10}
\sup_{z\in\gamma}\Big|\frac{\bar\bbv^{T}\mathcal{D}_n^{-2}(z)\bar\bbv}{1+\bar\bbv^{T}\mathcal{D}_n^{-1}(z)\bar\bbv}-
\frac{\frac{c\underline{m}(z)}{z[(1+\underline{m}(z))^2-c\underline{m}^2(z)]}+\bar u\int\frac{1}{(\lambda-z)^2}dF^{MP}(\lambda)}
{\bar uz\underline{m}^2(z)-1}\Big|\xrightarrow{i.p.} 0.
\end{eqnarray}


The arguments of Theorem $1$ of \cite{P2011} show that the truncation version of $\big(tr\mathcal{D}_n^{-1}(z)-pm_{c_n}(z)\big)$ converges in distribution to a two-dimensional Gaussian process and that the difference between $\big(tr\mathcal{D}_n^{-1}(z)-pm_{c_n}(z)\big)$ and its truncation version goes to zero in probability (see Page 563 of \cite{BS2004} and (2.28) of \cite{P2011}). Theorem 3 then follows from (\ref{panel10}), (\ref{panel11}) and (\ref{panel2}), Slutsky's theorem and Lemma 3 (one may refer to Page 563 of \cite{BS2004}).
\end{proof}

\begin{proof}[Proof of Theorem 2]
As in the proof of Theorem 1, in view of Lemma \ref{thm3} it suffices to prove the tightness of $\{p(s_n(t)-s(t)): t\in I\}$. As before, write
\begin{eqnarray}\label{panel11*}
p(s_n(t)-s(t))&=&p\int e^{itx}d[F^{\bbS_n}(x)-F^{c_n}(x)]\non
&=&-\frac{1}{2\pi i}\oint_{\gamma}e^{itz}(tr(\bbS_n-z\bbI_p)^{-1}-pm_{c_n}(z))dz,
\end{eqnarray}
where the contour $\gamma$ is specified in Lemma \ref{thm3}.

From the formula (\ref{b5}), we have
\begin{eqnarray}
tr\mathcal{D}_n^{-1}(z)=tr\bbD_n^{-1}(z)+\frac{\bar{\boldsymbol\varepsilon}^{T}\bbD_n^{-2}(z)\bar{\boldsymbol\varepsilon}}
{1-\bar{\boldsymbol\varepsilon}^{T}\bbD_n^{-1}(z)\bar{\boldsymbol\varepsilon}}.
\end{eqnarray}

This, together with (\ref{panel2}), yields
\begin{eqnarray}
tr\bbS_n^{-1}(z)-pm_{c_n}(z)=tr\bbD_n^{-1}(z)-pm_{c_n}(z)+\frac{\bar{\boldsymbol\varepsilon}^{T}\bbD_n^{-2}(z)\bar{\boldsymbol\varepsilon}}
{1-\bar{\boldsymbol\varepsilon}^{T}\bbD_n^{-1}(z)\bar{\boldsymbol\varepsilon}}
-\frac{\bar\bbv^{T}\mathcal{D}_n^{-2}(z)\bar\bbv}{1+\bar\bbv^{T}\mathcal{D}_n^{-1}(z)\bar\bbv}.
\end{eqnarray}

By (\ref{panel11}) and noting that $M_n(z)=tr\bbD_n^{-1}(z)-pm_{c_n}(z)$, it is sufficient to prove the tightness of the following three terms:
\begin{eqnarray}
g_{n1}(t)=-\frac{1}{2\pi i}\oint_{\gamma}e^{itz}M_n(z)dz,
\end{eqnarray}
\begin{eqnarray}
g_{n2}(t)=-\frac{1}{2\pi i}\oint_{\gamma}e^{itz}\frac{\bar{\boldsymbol\varepsilon}^{T}\bbD_n^{-2}(z)\bar{\boldsymbol\varepsilon}}
{1-\bar{\boldsymbol\varepsilon}^{T}\bbD_n^{-1}(z)\bar{\boldsymbol\varepsilon}}dz,
\end{eqnarray}
\begin{eqnarray}
g_{n3}(t)=-\frac{1}{2\pi i}\oint_{\gamma}e^{itz}\frac{\bar\bbv^{T}\mathcal{D}_n^{-2}(z)\bar\bbv}{1+\bar\bbv^{T}\mathcal{D}_n^{-1}(z)\bar\bbv}dz, 
\end{eqnarray}

The tightness of $\{g_{n1}(t): t\in I=[T_1,T_2]\}$ has been proved in Theorem 1. Next, via the same method adopted by Theorem 1, we prove the tightness of $\{g_{ni}(t): t\in I=[T_1,T_2]\}$, $i=2,3$ as follows.

By (\ref{panel5}), (\ref{panel6}) and Slutsky's theorem, we have
\begin{eqnarray}\label{a17}
\sup_{z\in\gamma}\Big|\frac{\bar{\boldsymbol\varepsilon}^{T}\bbD_n^{-2}(z)\bar{\boldsymbol\varepsilon}}
{1-\bar{\boldsymbol\varepsilon}^{T}\bbD_n^{-1}(z)\bar{\boldsymbol\varepsilon}}
+\frac{c\underline{m}(z)}{z\big((1+\underline{m}(z))^2-c\underline{m}^2(z)\big)}\Big|\xrightarrow{i.p.} 0.
\end{eqnarray}
We conclude from (\ref{a17}), (\ref{panel10}) and Lemma 3 that, as $n\rightarrow\infty$,
\begin{eqnarray}\label{a18}
\oint_{\gamma}\big|\frac{\bar{\boldsymbol\varepsilon}^{T}\bbD_n^{-2}(z)\bar{\boldsymbol\varepsilon}}
{1-\bar{\boldsymbol\varepsilon}^{T}\bbD_n^{-1}(z)\bar{\boldsymbol\varepsilon}}\big||dz|\xrightarrow{a.s.}\oint_{\gamma}
\big|\frac{c\underline{m}(z)}{z\big((1+\underline{m}(z))^2-c\underline{m}^2(z)\big)}\big||dz|
\end{eqnarray}
and
\begin{eqnarray}\label{a19}
\oint_{\gamma}\big|\frac{\bar\bbv^{T}\mathcal{D}_n^{-2}(z)\bar\bbv}{1+\bar\bbv^{T}\mathcal{D}_n^{-1}(z)\bar\bbv}\big||dz|\xrightarrow{a.s.}
\oint_{\gamma}
\Big|\frac{\frac{c\underline{m}(z)}{z[(1+\underline{m}(z))^2-c\underline{m}^2(z)]}+\bar u\int\frac{1}{(\lambda-z)^2}dF^{MP}(\lambda)}
{\bar uz\underline{m}^2(z)-1}\Big||dz|.
\end{eqnarray}

By (\ref{a18}), (\ref{a19}) and the same proof as (\ref{a11}) to (\ref{a20}), the tightness of $\{g_{ni}(t): t\in I=[T_1,T_2]\}$, $i=2,3$ can be derived.
\end{proof}

\small{

}

\begin{figure}[h]
\caption{Empirical Spectral Distribution of ARCH(1) and I.I.D Normal Vector}\label{fig0}
\centering
\includegraphics[scale=0.45]{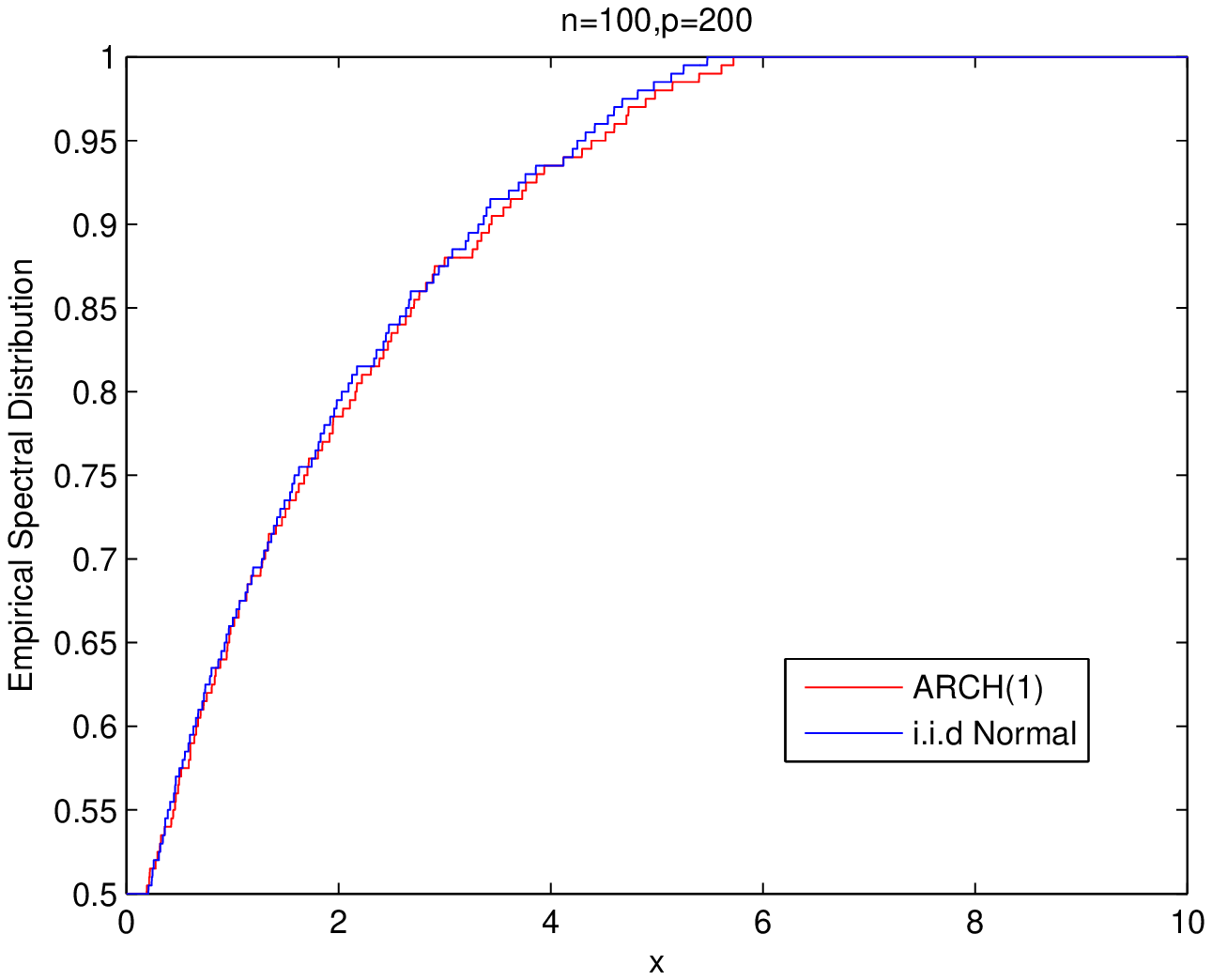}
\includegraphics[scale=0.45]{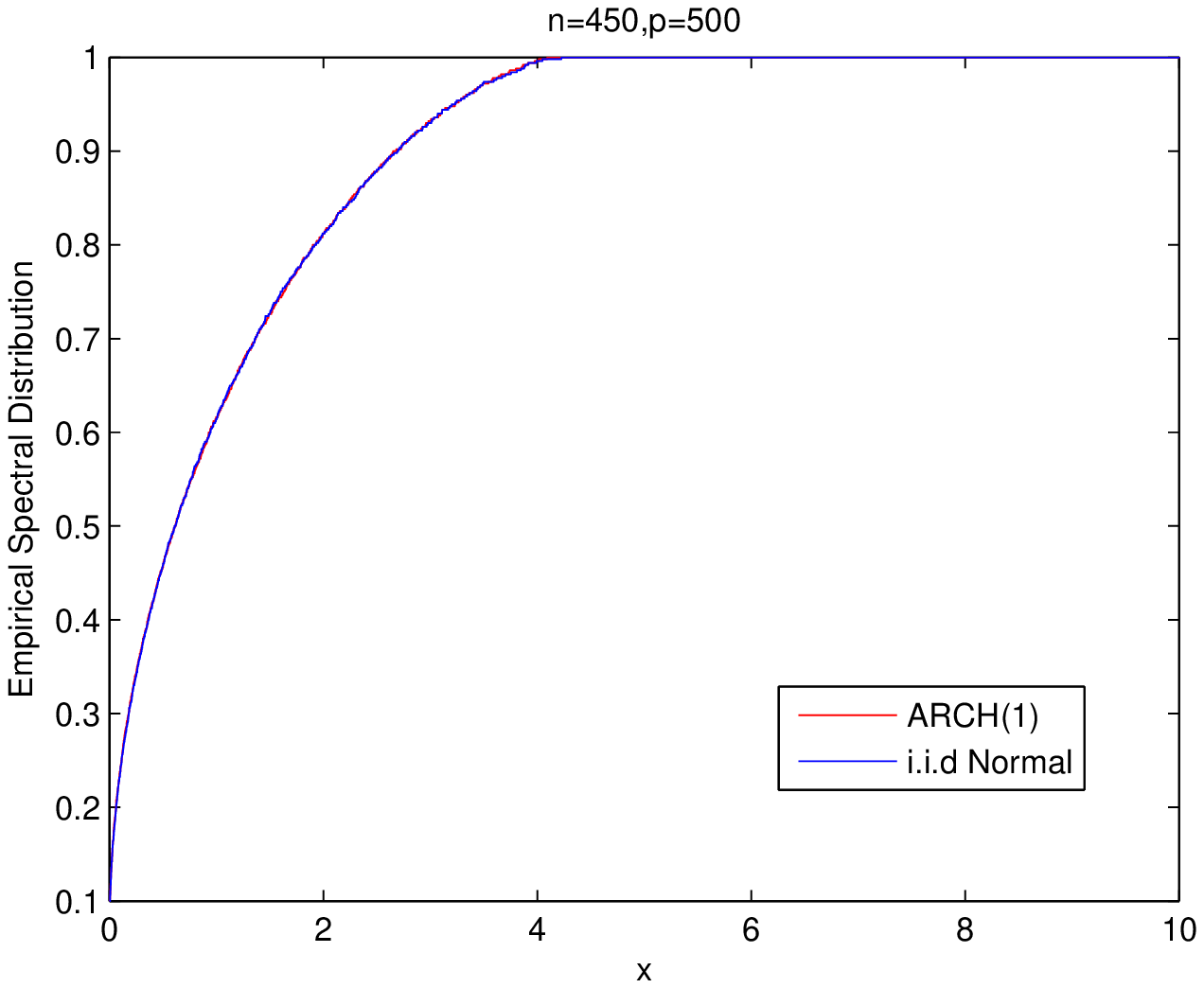}
\\
\includegraphics[scale=0.45]{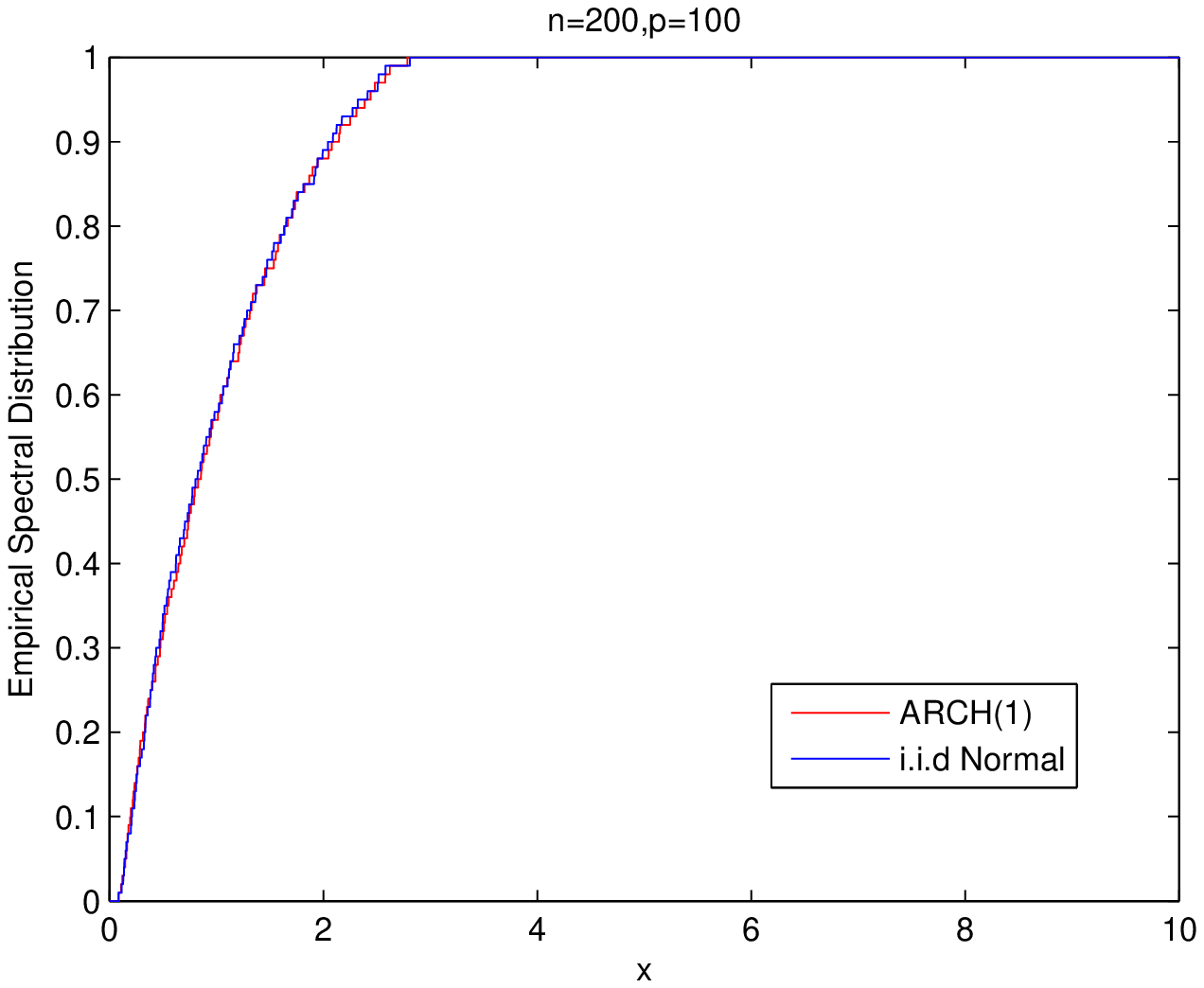}
\includegraphics[scale=0.45]{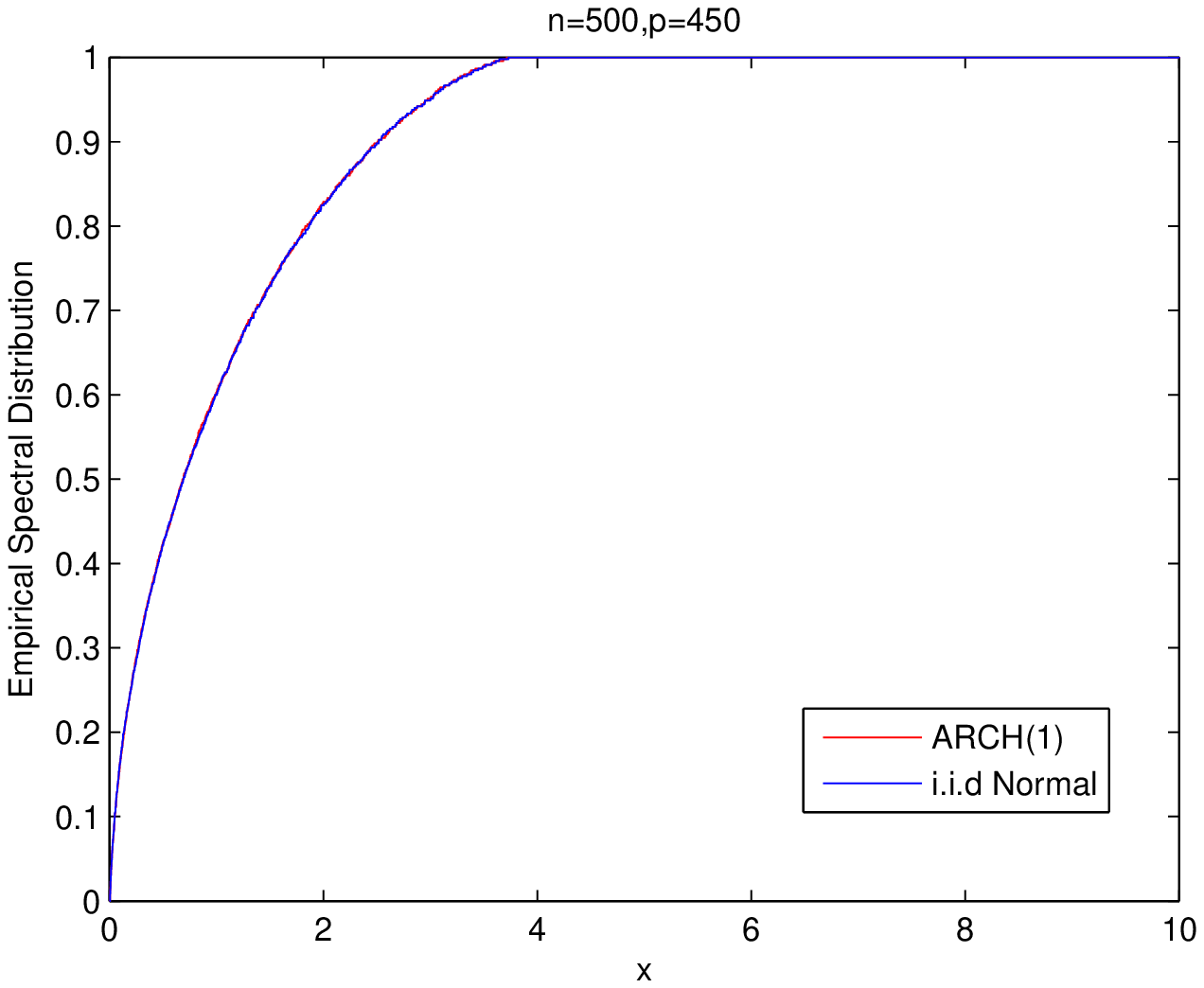}
\end{figure}

\begin{figure}[h]
\caption{P-values of the proposed test for daily closed stock prices from NYSE}\label{fig1}
\centering
\includegraphics[scale=0.45]{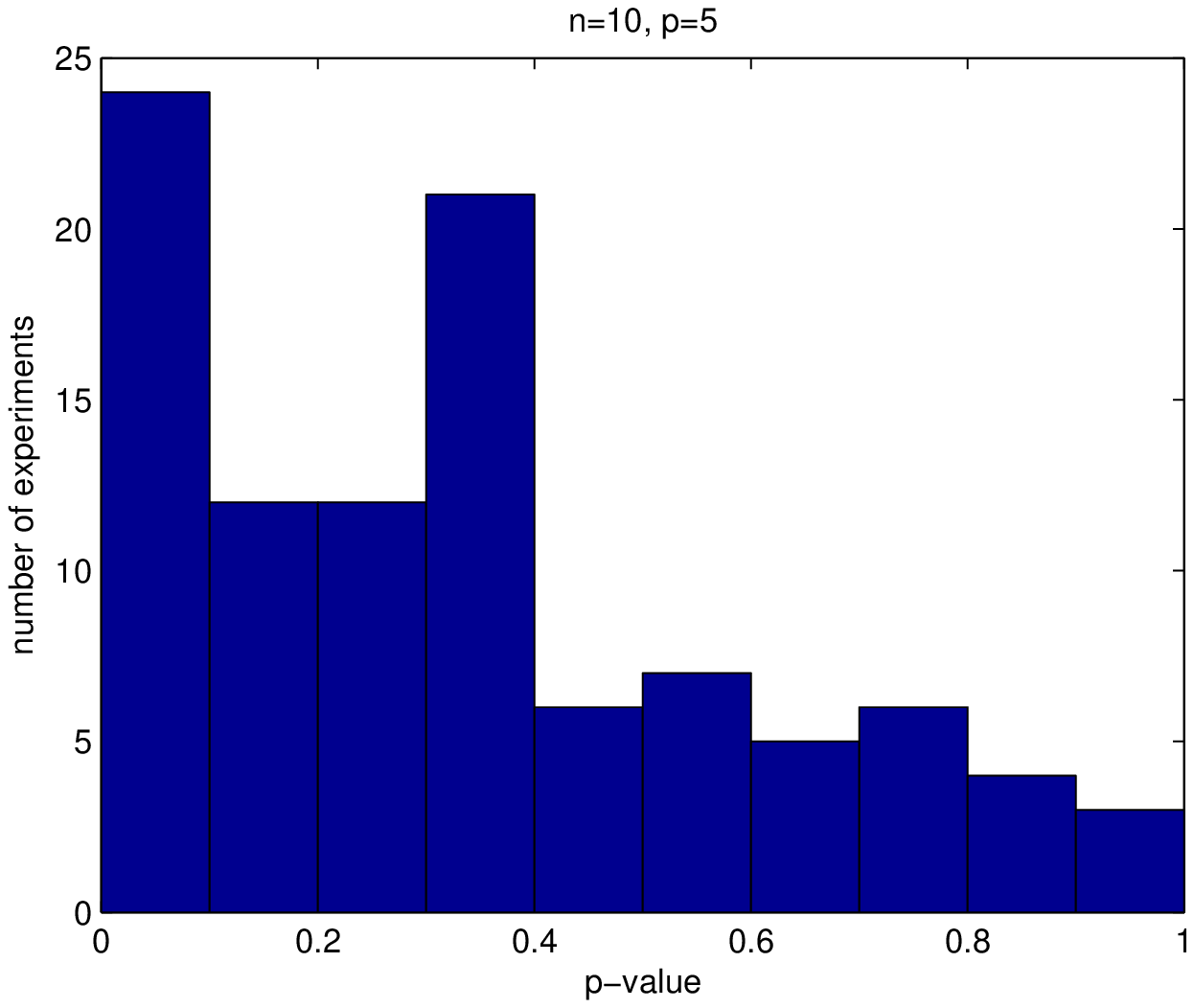}
\includegraphics[scale=0.45]{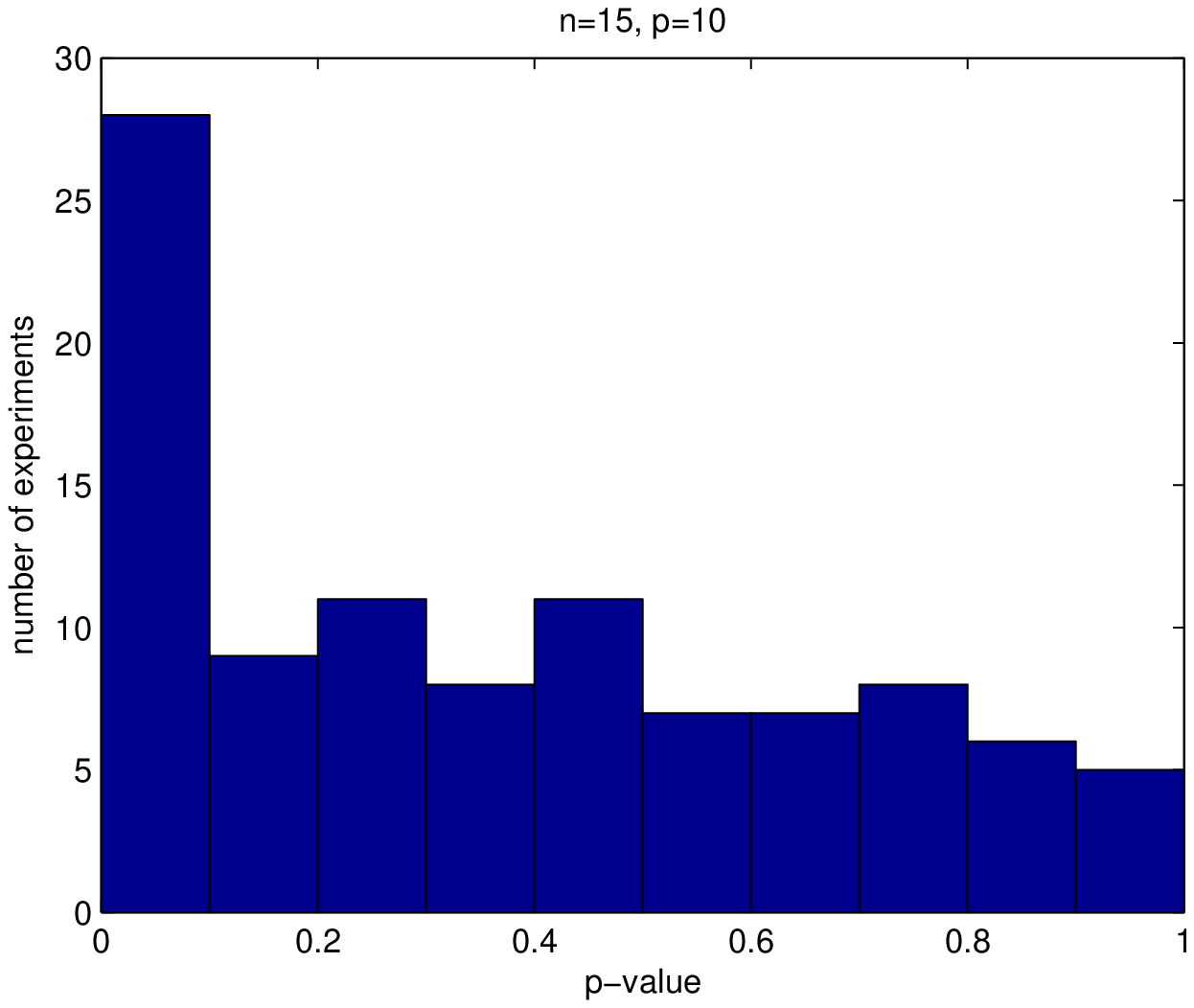}
\\
\includegraphics[scale=0.45]{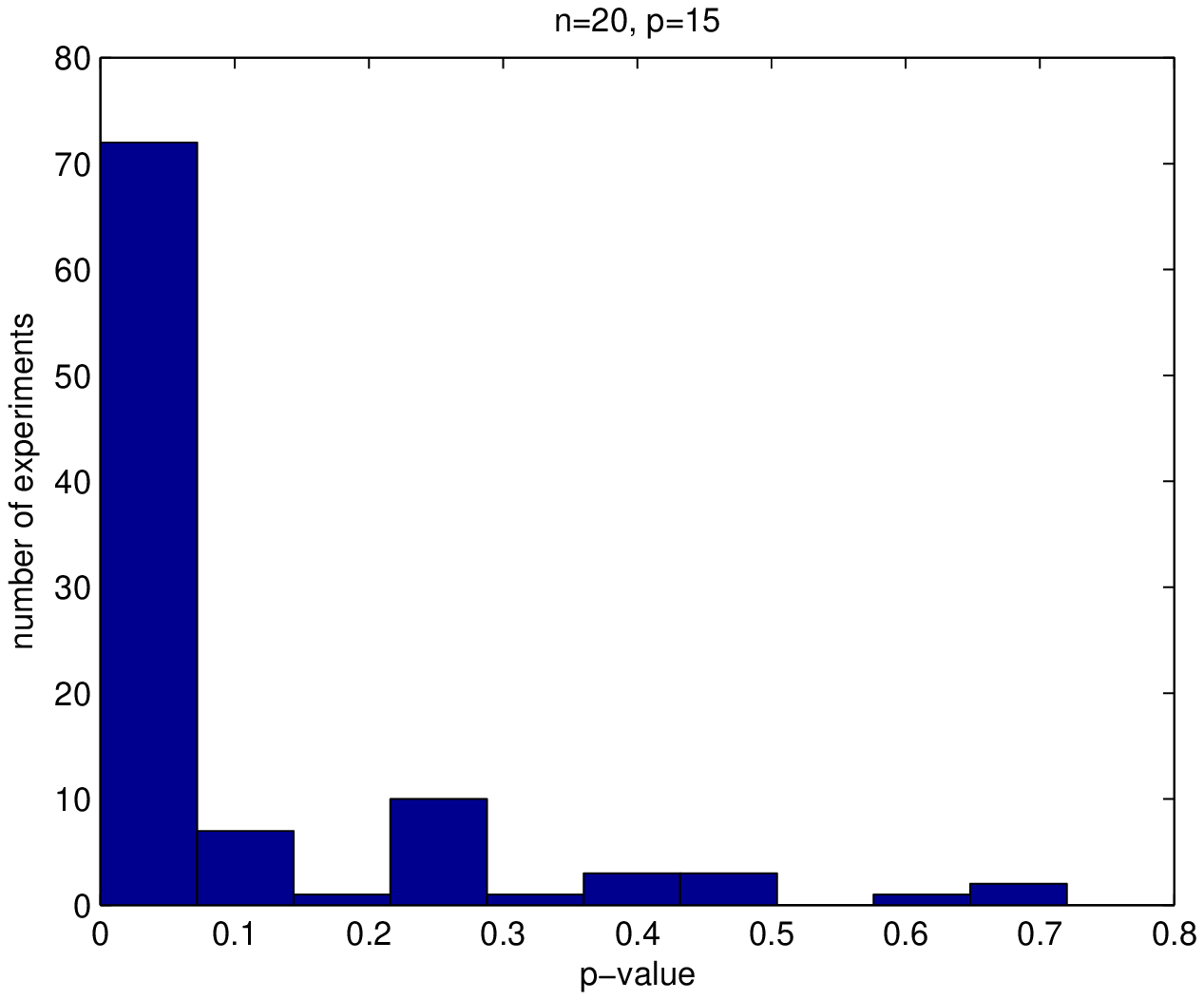}
\includegraphics[scale=0.45]{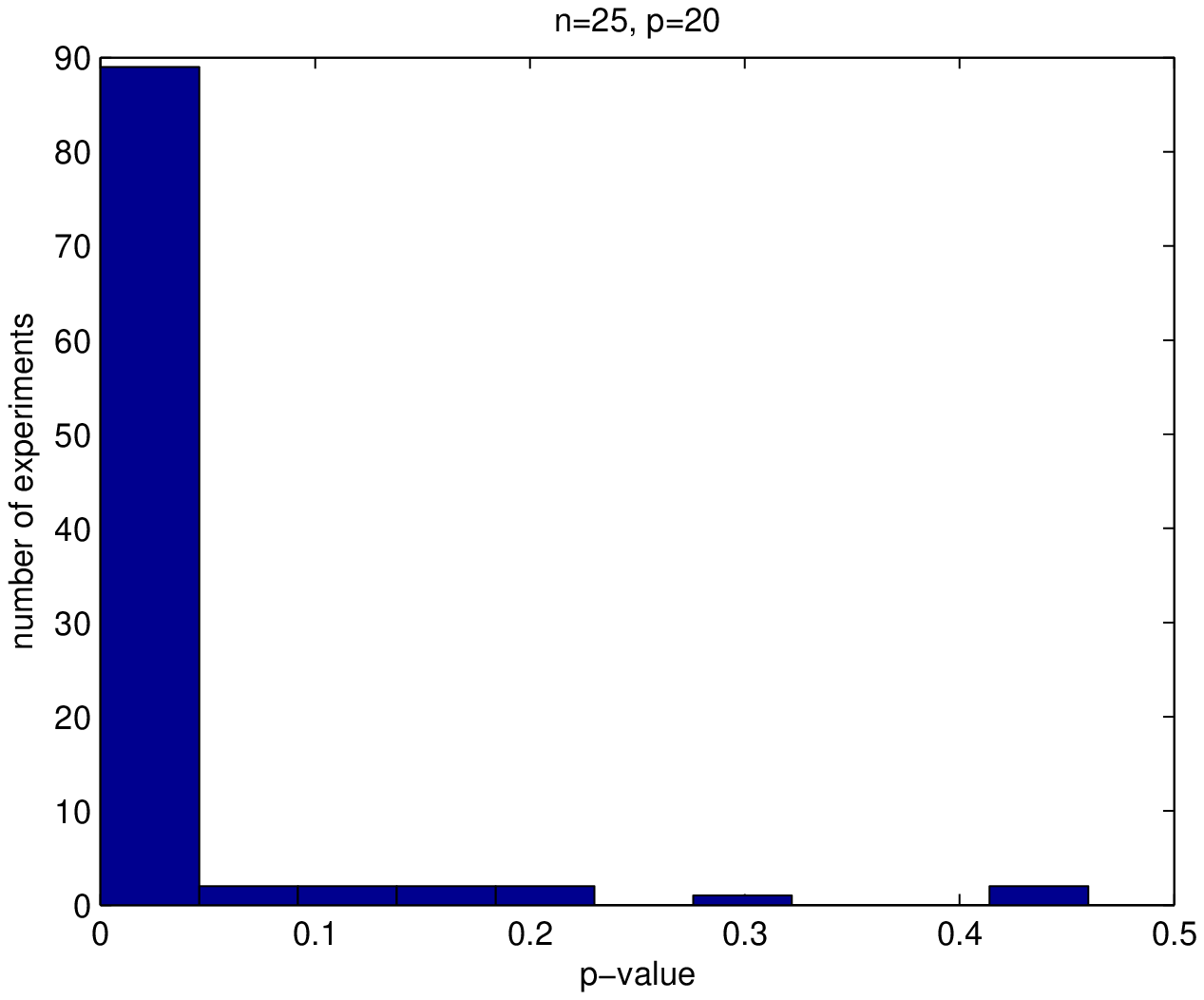}
\\
\includegraphics[scale=0.45]{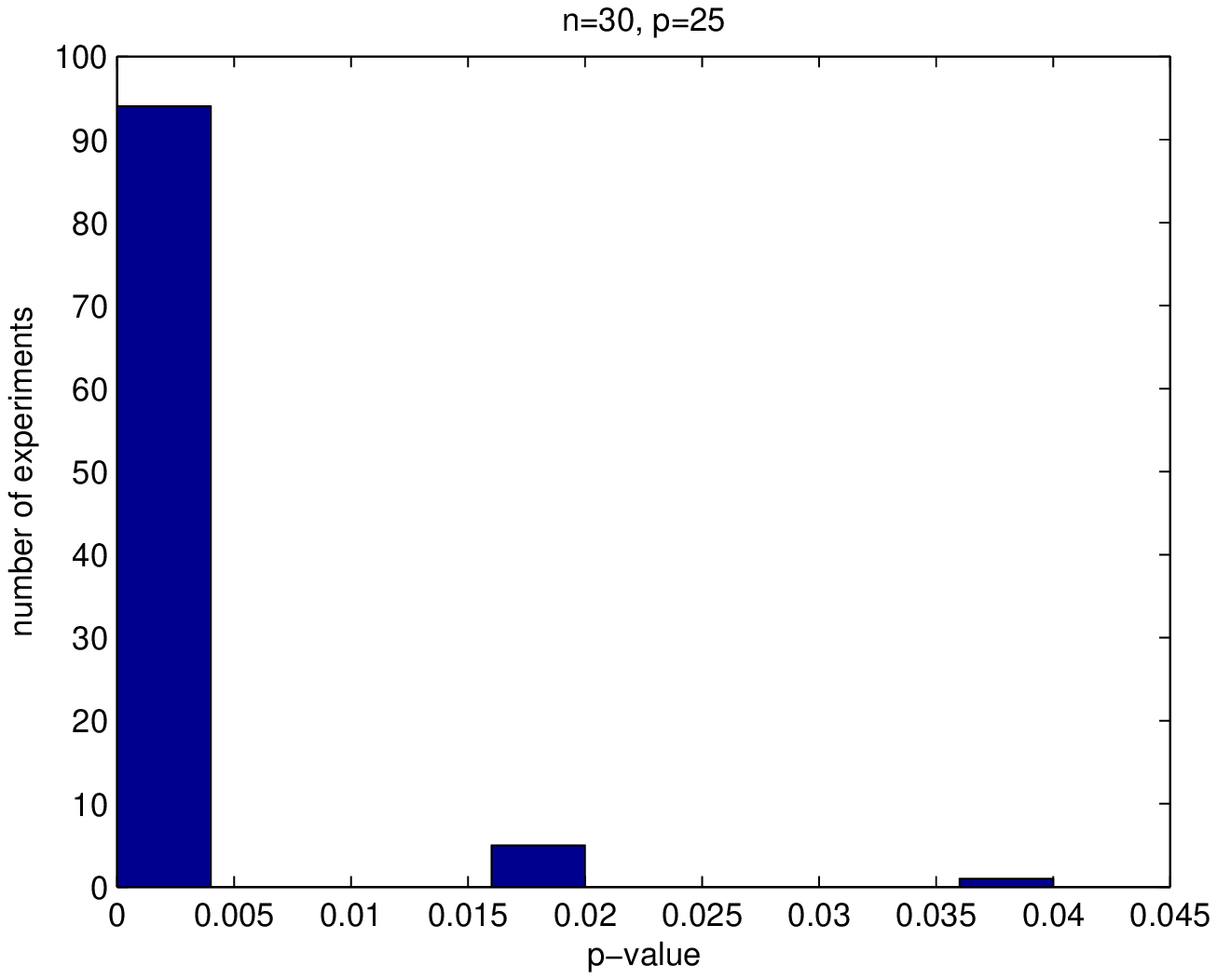}
\includegraphics[scale=0.45]{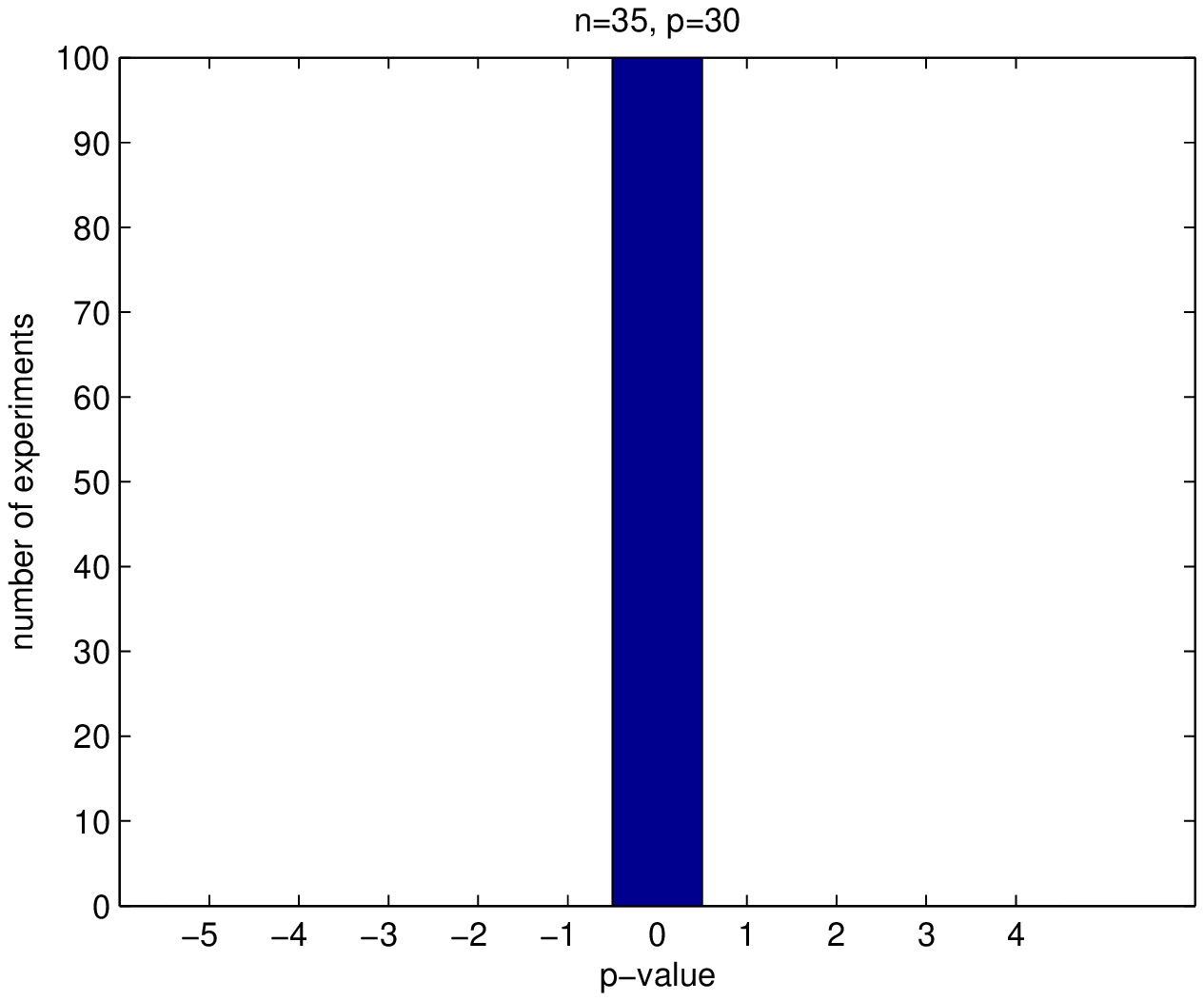}
\\
{\footnotesize *These are P-values for $p$ companies from NYSE,  each of which has $n$ closed stock prices during the period $1990.1.1-2002.1.1$. The number of repeated experiments are $100$. All the closed stock prices are from WRDS database.}
\end{figure}

\begin{figure}[h]
\caption{P-values of the proposed test for daily closed stock prices from transportation sections of NYSE}\label{fig2}
\centering
\includegraphics[scale=0.45]{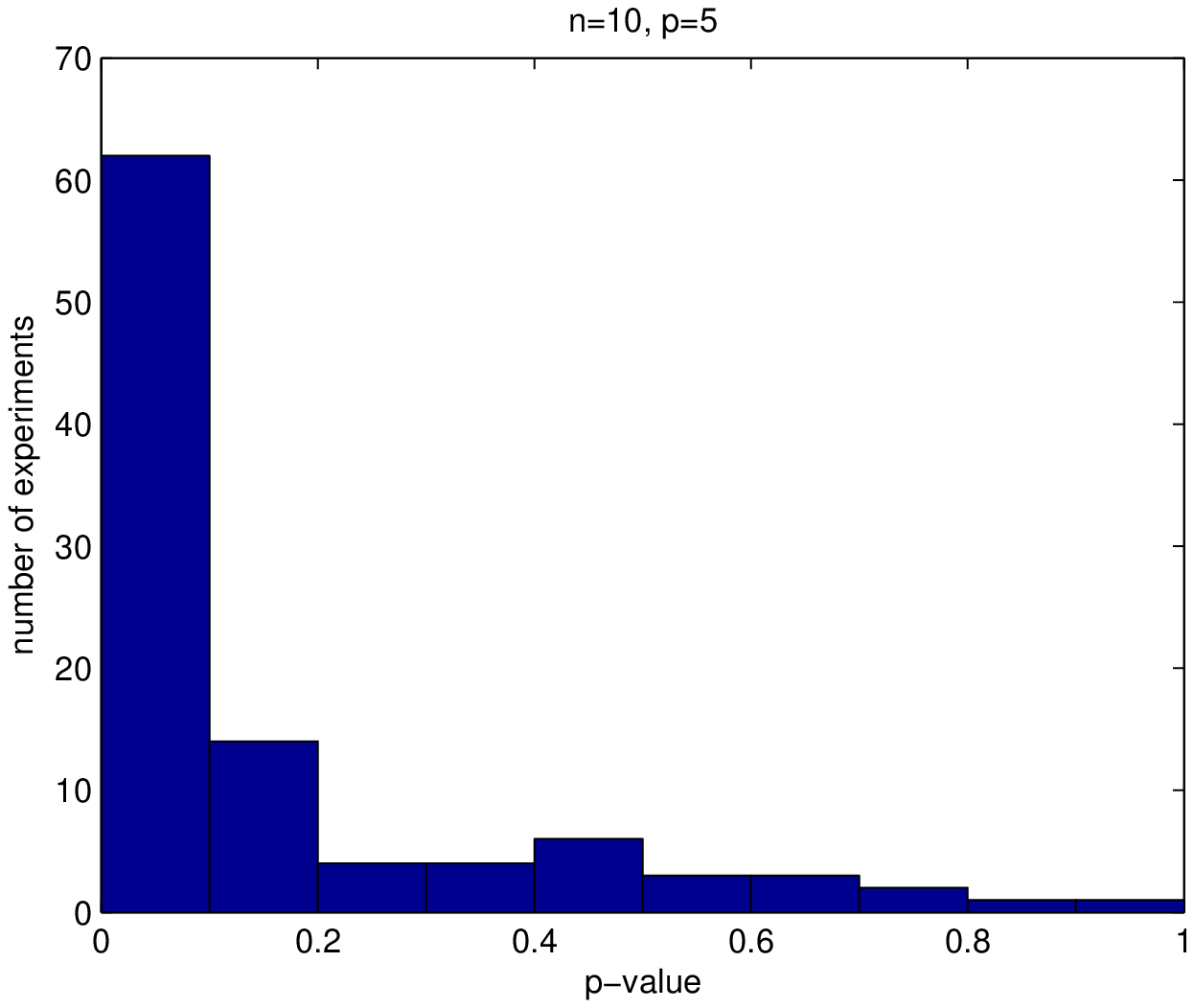}
\\
\includegraphics[scale=0.35]{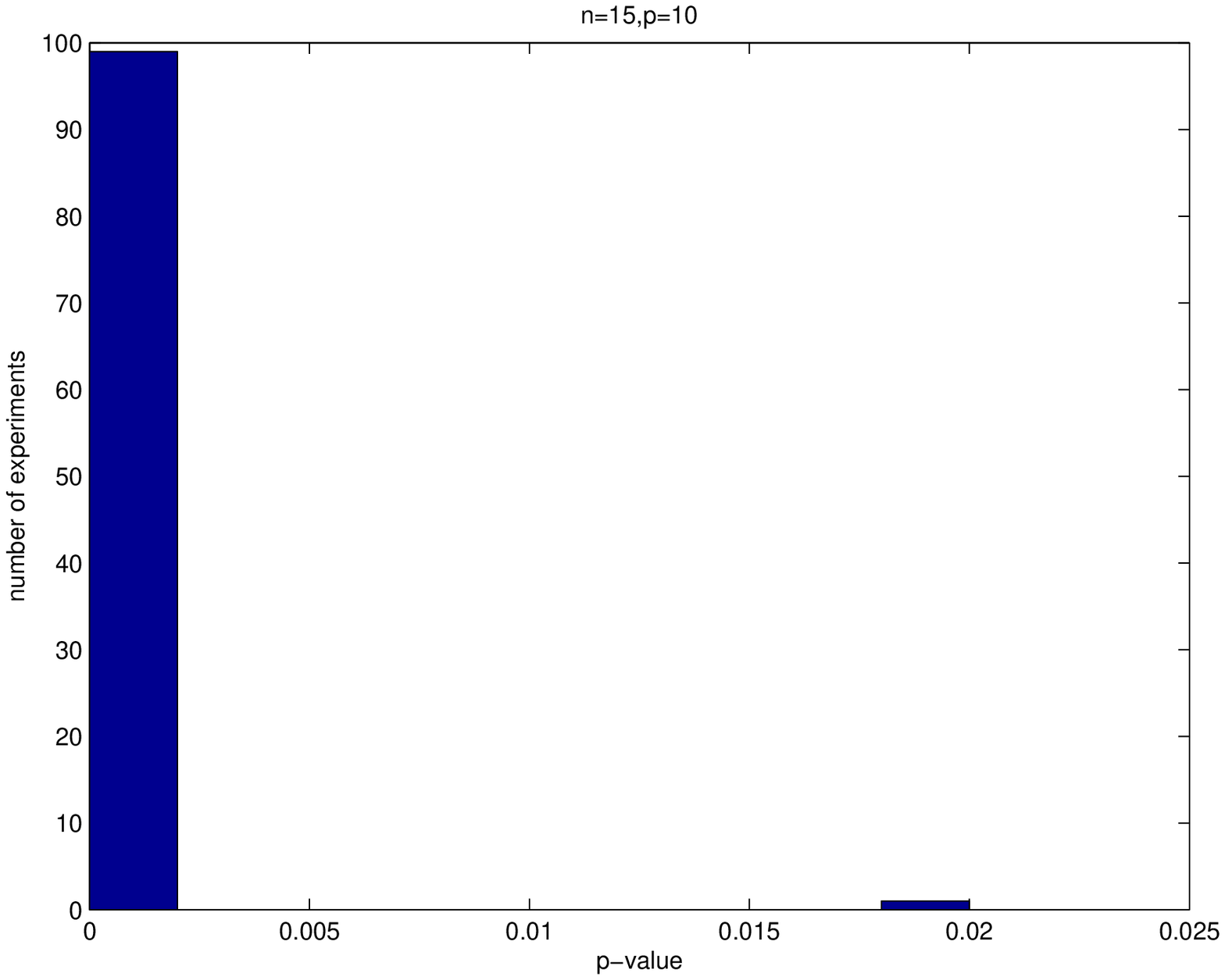}
\includegraphics[scale=0.35]{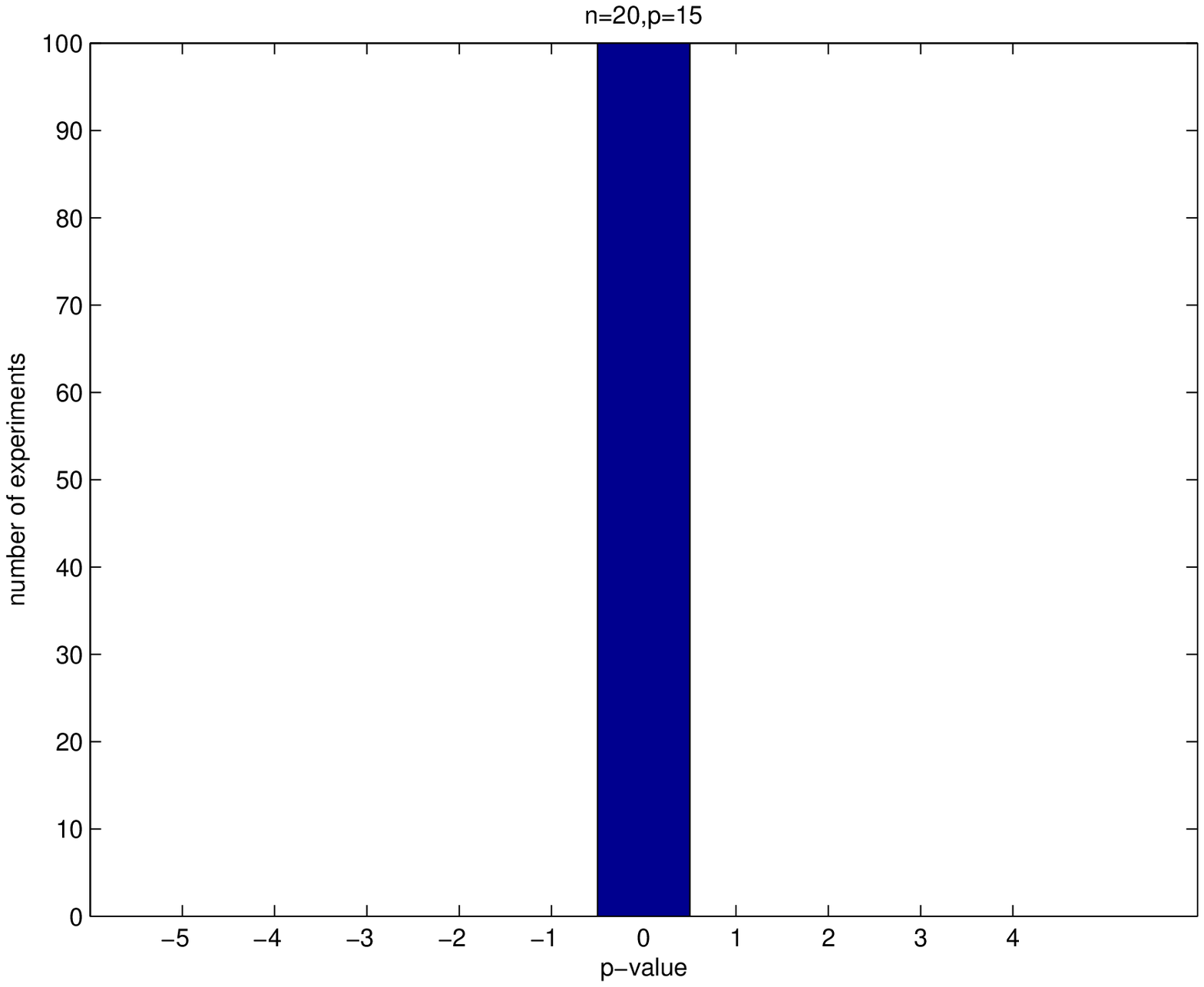}
\\
{\footnotesize *These are P-values for $p$ companies from transportation sections of NYSE,  each of which has $n$ closed stock prices during the period $1990.1.1-2002.1.1$. The number of repeated experiments are $100$. All the closed stock prices are from WRDS database.}
\end{figure}

\begin{table}[h]
{\tiny \caption{\label{tb1}Empirical sizes and powers of the proposed test at significant level $0.05$ for standardized normal distributed random vectors.}}
\begin{center}
{\tiny
\begin{tabular}{ccccccccccccc}
\hline
&\multicolumn{11}{c}{$p$}  \\
\hline
n  &   5  & 10   & 20   & 30   &  40  & 50   & 60   & 70 & 80 & 90 & 100 \\
\hline
&\multicolumn{10}{c}{Empirical sizes}\\
5&0.069&	0.064&	0.027&	0.039&	0.034&	0.052&	0.035&	0.033&	0.037&	0.043&	0.047\\
10&0.049&	0.059&	0.048&	0.046&	0.055&	0.044&	0.043&	0.048&	0.048&	0.038&	0.046\\
20&0.045&	0.042&	0.047&	0.051&	0.052&	0.052&	0.051&	0.041&	0.042&	0.046&	0.041\\
30&0.034&	0.035&	0.052&	0.053&	0.068&	0.063&	0.050&	0.049&	0.047&	0.040&	0.046\\
40&0.055&	0.050&	0.052&	0.061&	0.056&	0.038&	0.042&	0.064&	0.054&	0.044&	0.066\\
50&0.038&	0.041&	0.048&	0.044&	0.050&	0.052&	0.050&	0.058&	0.046&	0.053&	0.041\\
60&0.064&	0.061&	0.043&	0.049&	0.047&	0.049&	0.050&	0.049&	0.059&	0.046&	0.048\\
70&0.047&	0.042&	0.051&	0.039&	0.052&	0.055&	0.058&	0.044&	0.037&	0.038&	0.049\\
80&0.028&	0.032&	0.033&	0.041&	0.052&	0.054&	0.047&	0.042&	0.047&	0.046&	0.040\\
90&0.042&	0.037&	0.032&	0.048&	0.054&	0.044&	0.053&	0.041&	0.037&	0.051&	0.042\\
100&0.037&	0.046&	0.036&	0.035&	0.045&	0.029&	0.045&	0.056&	0.048&	0.047&	0.056\\
&\multicolumn{11}{c}{Empirical powers}  \\
5&0.044&	0.053&	0.072&	0.070&	0.065&	0.048&	0.053&	0.063&	0.057&	0.053&	0.080\\
10&0.054&	0.065&	0.046&	0.051&	0.052&	0.049&	0.049&	0.037&	0.049&	0.060&	0.062\\
20&0.056&	0.066&  0.091&	0.059&	0.062&	0.051&	0.045&	0.075&	0.078&	0.078&	0.063\\
30&0.051&	0.062&	0.054&	0.068&	0.051&	0.081&	0.092&	0.087&	0.115&	0.128&	0.148\\
40&0.041&	0.036&	0.052&	0.055&	0.126&	0.183&	0.144&	0.208&	0.209&	0.276&	0.254\\
50&0.068&	0.051&	0.054&	0.066&	0.151&	0.262&	0.329&	0.420&	0.474&	0.432&	0.574\\
60&0.046&	0.027&	0.055&	0.098&	0.154&	0.310&	0.455&	0.592&	0.596&	0.733&	0.715\\
70&0.050&	0.044&	0.018&	0.110&	0.211&	0.379&	0.565&	0.747&	0.793&	0.846&	0.841\\
80&0.053&	0.046&	0.036&	0.102&	0.233&	0.439&	0.687&	0.892&	0.906&	0.944&	0.977\\
90&0.028&	0.040&	0.054&	0.122&	0.226&	0.484&	0.752&	0.878&	0.963&	0.980&	0.999\\
\hline
\end{tabular}}
\\ \end{center}\medskip
{\footnotesize *The powers are under the alternative hypothesis that the population covariance matrix is $\Sigma=0.05I_p+0.95\textbf{1}_p\textbf{1}_p^{'}$.}
\end{table}

\begin{table}[h]
{\small \caption{\label{tb2}Empirical sizes and powers of the LRT at significant level $0.05$ for standardized normal distributed random vectors.}}
\begin{center}
{\tiny
\begin{tabular}{ccccccccccccc}
\hline
&\multicolumn{11}{c}{$p$}  \\
\hline
n  &   5  & 10   & 20   & 30   &  40  & 50   & 60   & 70 & 80 & 90 & 100 \\
\hline
&\multicolumn{11}{c}{Empirical sizes}\\
5&0.005&	0.475&	0.495&	0&	0&	0&	0&	0&	0&	0&	0\\
10&0.030&	0.115&	0.504&	0.454&	0&	0&	0&	0&	0&	0&	0\\
20&0.047&	0.030&	0.522&	1&	0.500&	0&	0&	0&	0&	0&	0\\
30&0.040&	0.032&	0.076&	0.873&	1&	1&	0&	0&	0&	0&	0\\
40&0.044&	0.034&	0.057&	0.148&	0.982&	1&	1&	1&	0&	0&	0\\
50&0.057&	0.034&	0.044&	0.062&	0.299&	1&	1&	1&	1&	1&	0\\
60&0.049&	0.053&	0.040&	0.065&	0.101&	0.578&	1&	1&	1&	1&	1\\
70&0.038&	0.060&	0.049&	0.054&	0.108&	0.229&	0.784&	1&	1&	1&	1\\
80&0.055&	0.040&	0.043&	0.072&	0.079&	0.147&	0.442&	0.950&	1&	1&	1\\
90&0.054&	0.061&	0.047&	0.033&	0.048&	0.129&	0.233&	0.691&	0.991&	1&	1\\
100&0.055&	0.048&	0.066&	0.069&	0.076&	0.085&	0.185&	0.402&	0.844&	0.997&	1\\
&\multicolumn{11}{c}{Empirical powers}  \\
5&0.005&	0.504&	0.505&	0&	0&	0&	0&	0&	0&	0&	0\\
10&0.033&	0.125&	0.506&	0.485&	0&	0&	0&	0&	0&	0&	0\\
20&0.051&	0.049&	0.587&	1&	0.496&	0&	0&	0&	0&	0&	0\\
30&0.056&	0.062&	0.121&	0.912&	1&	1&	0&	0&	0&	0&	0\\
40&0.061&	0.077&	0.139&	0.311&	0.990&	1&	1&	1&	0&	0&	0\\
50&0.092&	0.090&	0.150&	0.206&	0.576&	1&	1&	1&	1&	1&	0\\
60&0.088&	0.142&	0.220&	0.266&	0.420&	0.849&	1&	1&	1&	1&	1\\
70&0.102&	0.170&	0.222&	0.320&	0.463&	0.691&	0.984&	1&	1&	1&	1\\
80&0.133&	0.169&	0.246&	0.392&	0.478&	0.666&	0.883&	0.994&	1&	1&	1\\
90&0.122&	0.190&	0.331&	0.404&	0.526&	0.666&	0.819&	0.983&	1&	1&	1\\
100&0.153&	0.232&	0.411&	0.550&	0.658&	0.713&	0.842&	0.952&	0.995&	1&	1\\
\hline
\end{tabular}}
\\ \end{center}\medskip
{\footnotesize *The powers are under the alternative hypothesis that the population covariance matrix is $\Sigma=0.05I_p+0.95\textbf{1}_p\textbf{1}_p^{'}$.}
\end{table}

\begin{table}[h]
{\small \caption{\label{tb3}Empirical sizes and powers of the proposed test at $0.05$ significance level for standardized gamma distributed random vectors.}}
\begin{center}
{\tiny
\begin{tabular}{ccccccccccccc}
\hline
&\multicolumn{11}{c}{$p$}  \\
\hline
n  &   5  & 10   & 20   & 30   &  40  & 50   & 60   & 70 & 80 & 90 & 100 \\
\hline
&\multicolumn{11}{c}{Empirical sizes}\\
5&0.089&	0.078&	0.068&	0.059&	0.060&	0.063&	0.061&	0.047&	0.050&	0.061&	0.044\\
10&0.066&	0.075&	0.068&	0.054&	0.046&	0.051&	0.047&	0.048&	0.050&	0.042&	0.052\\
20&0.057&	0.058&	0.072&	0.062&	0.053&	0.054&	0.053&	0.052&	0.049&	0.051&	0.048\\
30&0.069&	0.069&	0.069&	0.076&	0.058&	0.056 &       0.054&	0.047&	0.067&	0.066&	0.054\\
40&0.061&	0.049&	0.058&	0.040&	0.065&	0.048&	0.063&	0.065&	0.068&	0.047&	0.067\\
50&0.053&	0.054&	0.055&	0.057&	0.059&        0.048&	0.067&	0.066&	0.043&	0.059&	0.053\\
60&0.059&	0.052&	0.057&	0.060&	0.052&	0.067 &        0.058&	0.064&	0.064&	0.061&	0.069\\
70&0.044&	0.050&	0.064&	0.055&	0.071&	0.054&	0.067&	0.064&	0.051&	0.077&	0.048\\
80&0.045&	0.050&	0.061&	0.043&	0.071&	0.055&	0.071&	0.053&	0.056&	0.070 &       0.060\\
90&0.041&	0.067&	0.034&	0.056&	0.049&	0.054&	0.050&	0.060&	0.047&	0.060&	0.058\\
100&0.070&	0.045&	0.059&	0.055&	0.047&	0.062&	0.069&	0.057&	0.056&	0.060&	0.061\\
&\multicolumn{11}{c}{Empirical powers}  \\
5&0.334&	0.575&	0.853&	0.944&	0.983&	0.989&	0.998&	0.997&	0.999&	0.998&	0.998\\
10&0.513&	0.838&	0.979&	0.999&	1&	1&	1&	1&	1&	1&	1\\
20&0.721&	0.970&	0.999&	1&	1&	1&	1&	1&	1&	1&	1\\
30&0.834&	0.998&	1&	1&	1&	1&	1&	1&	1&	1&	1\\
40&0.914&	1&	1&	1&	1&	1&	1&	1&	1&	1&	1\\
50&0.952&	1&	1&	1&	1&	1&	1&	1&	1&	1&	1\\
60&0.991&	1&	1&	1&	1&	1&	1&	1&	1&	1&	1\\
70&0.992&	1&	1&	1&	1&	1&	1&	1&	1&	1&	1\\
80&0.998&	1&	1&	1&	1&	1&	1&	1&	1&	1&	1\\
90&0.999&        1&	1&	1&	1&	1&	1&	1&	1&	1&	1\\
100&1&	1&	1&	1&	1&	1&	1&	1&	1&	1&	1\\

\hline
\end{tabular}}
\\ \end{center}\medskip
{\footnotesize *The powers are under the alternative hypothesis that the population covariance matrix is $\Sigma=0.05I_p+0.95\textbf{1}_p\textbf{1}_p^{'}$.}
\end{table}

\begin{table}[h]
{\small \caption{\label{tb4}Empirical powers of the proposed test at $0.05$ significance level for MA(1) model.}}
\begin{center}
{\tiny
\begin{tabular}{ccccccccccccc}
\hline
&\multicolumn{11}{c}{$p$}  \\
\hline
n  &   5  & 10   & 20   & 30   &  40  & 50   & 60   & 70 & 80 & 90 & 100 \\
\hline
5 & 0.096 & 0.097 & 0.097& 0.111& 0.210 & 0.214 &0.198&	0.221&	0.201&	0.223&	0.199\\
10&0.082&0.090&	0.173&	0.227&	0.374&	0.715&	0.722&	0.809&	0.903&	0.913&	0.934\\
20&0.099&0.165&	0.400&	0.597&	0.683&	0.822&	0.951&	0.994&	1&	1&	1\\
30&0.067&0.121&	0.611&  0.733&  0.803&	0.986&	1&	1&	1&	1&	1\\
40&0.091&0.321&	0.653&  0.938&  0.968&	1&	1&	1&	1&	1&	1\\
50&0.139&0.416&	0.910&  0.956&  0.998&	1&	1&	1&	1&	1&	1\\
60&0.117&0.412&	0.918&0.994&1&	1&	1&	1&	1&	1&	1\\
70&0.146&0.535&	0.983&	1&	1&	1&	1&	1&	1&	1&	1\\
80&0.138&	0.620&	0.990&	1&	1&	1&	1&	1&	1&	1&	1\\
90&0.239&	0.805&	0.999&	1&	1&	1&	1&	1&	1&	1&	1\\
100&0.132&	0.761&	1&	1&	1&	1&	1&	1&	1&	1&	1\\
\hline
\end{tabular}}
\\ \end{center}\medskip
\end{table}

\begin{table}[h]
{\small \caption{\label{tb6AR}Empirical powers of the proposed test at $0.05$ significance level for AR(1) model.}}
\begin{center}
{\tiny
\begin{tabular}{cccccccccccc}
\hline
&\multicolumn{11}{c}{$p$}  \\
\hline
n  &   5  & 10   & 20   & 30   &  40  & 50   & 60   & 70 & 80 & 90 & 100 \\
\hline
5 & 0.130 & 0.094 & 0.264 & 0.253 &	0.181 &	0.169 &	0.191 &	0.134 &	0.144 &	0.129 &	0.113\\
10& 0.167 &	0.289 &	0.482 &	0.640 &	0.668 &	0.609 &	0.563 &	0.524 &	0.427 &	0.384 &	0.321\\
20& 0.230 &	0.544 &	0.878 &	0.271 &	0.954 &	0.994 &	0.999 &	0.997 &	0.996 &	0.984 &	0.974\\
30& 0.205 &	0.602 &	0.993 &	0.999 &	0.671 &	0.936 &	1 &	1 &	1 &	1 &	1\\
40& 0.344 &	0.916 &	0.998 &	1 &	1 &	0.982 &	1 &	1 &	1 &	1 &	1\\
50& 0.541 &	0.984 &	1 &	1 &	1 &	1 &	1 &	1 &	1 &	1 &	1\\
60& 0.571 &	0.986 &	1 &	1 &	1 &	1 &	1 &	1 &	1 &	1 &	1\\
70& 0.672 &	0.997 &	1 &	1 &	1 &	1 &	1 &	1 &	1 &	1 &	1\\
80& 0.673 &	0.999 &	1 &	1 &	1 &	1 &	1 &	1 &	1 &	1 &	1\\
90& 0.882 &	1 &	1 &	1 &	1 &	1 &	1 &	1 &	1 &	1 &	1\\
100&0.801 &	1 &	1 &	1 &	1 &	1 &	1 &	1 &	1 &	1 &	1\\
\hline
\end{tabular}}
\\ \end{center}\medskip
\end{table}

\begin{table}[h]
{\small \caption{\label{tb6SMA}Empirical powers of the proposed test at $0.05$ significance level for SMA(1) model.}}
\begin{center}
{\tiny
\begin{tabular}{cccccccccccc}
\hline
&\multicolumn{11}{c}{$p$}  \\
\hline
n  &   5  & 10   & 20   & 30   &  40  & 50   & 60   & 70 & 80 & 90 & 100 \\
\hline
5&0.330&	0.431&	0.782&	0.999&	1&	1&	1&	1&	1&	1&	1\\
10&0.647&	1&	1&	1&	1&	1&	1&	1&	1&	1&	1\\
20&0.967&	1&	1&	1&	1&	1&	1&	1&	1&	1&	1\\
30&0.962&	1&	1&	1&	1&	1&	1&	1&	1&	1&	1\\
40&0.998&	1&	1&	1&	1&	1&	1&	1&	1&	1&	1\\
50&1&	1&	1&	1&	1&	1&	1&	1&	1&	1&	1\\
60&1&	1&	1&	1&	1&	1&	1&	1&	1&	1&	1\\
70&1&	1&	1&	1&	1&	1&	1&	1&	1&	1&	1\\
80&1&	1&	1&	1&	1&	1&	1&	1&	1&	1&	1\\
90&1&	1&	1&	1&	1&	1&	1&	1&	1&	1&	1\\
100&1&	1&	1&	1&	1&	1&	1&	1&	1&	1&	1\\
\hline
\end{tabular}}
\\ \end{center}\medskip
\end{table}

\begin{table}[h]
{\small \caption{\label{tb8}Empirical sizes and powers of the proposed test at $0.05$ significance level for the general panel data model.}}
\begin{center}
{\tiny
\begin{tabular}{cccccccccccc}
\hline
&\multicolumn{11}{c}{$p$}  \\
\hline
n  &   5  & 10   & 20   & 30   &  40  & 50   & 60   & 70 & 80 & 90 & 100 \\
\hline
&\multicolumn{10}{c}{Empirical sizes}\\
5 & 0.038 & 0.045 & 0.047 & 0.057 & 0.058 & 0.067 & 0.069 & 0.069 & 0.072 & 0.076 &0.074\\
10 & 0.041 & 0.042 & 0.046 & 0.049 & 0.056 & 0.050 & 0.065 & 0.047 & 0.068 & 0.063 &0.069\\
20 & 0.035 & 0.042 & 0.049 & 0.056 & 0.052 & 0.046 & 0.063 & 0.043 & 0.069 & 0.055 &0.057\\
30 & 0.043 & 0.049 & 0.043 & 0.059 & 0.048 & 0.068 & 0.059 & 0.057 & 0.040 & 0.055 &0.047\\
40& 0.048 & 0.052 & 0.043 & 0.060 & 0.057 & 0.046 & 0.049 & 0.054 & 0.046 & 0.058 &0.061\\
50& 0.057 & 0.048 & 0.046 & 0.058 & 0.052 & 0.055 & 0.048 & 0.049 & 0.050 & 0.040 &0.041\\
60& 0.058 & 0.056 & 0.055 & 0.048 & 0.047 & 0.045 & 0.053 & 0.066 & 0.058 & 0.049 & 0.050\\
70& 0.062 & 0.060 & 0.059 & 0.056 & 0.049 & 0.057 & 0.049 & 0.068 & 0.052 & 0.036 &0.043\\
80& 0.071 & 0.063 & 0.067 & 0.047 & 0.048 & 0.058 & 0.056 & 0.044 & 0.059 & 0.057 & 0.055\\
90& 0.065 & 0.068 & 0.065 & 0.048 & 0.053 & 0.048 & 0.056 & 0.048 & 0.048 & 0.066 & 0.060\\
100&0.037&	0.046&	0.036&	0.035&	0.045&	0.043&	0.045&	0.056&	0.048&	0.047&	0.055\\
&\multicolumn{11}{c}{Empirical powers}  \\
5&0.150&	0.238&	0.345&	0.417&	0.484&	0.529&	0.549&	0.615&	0.611&	0.668&	0.692\\
10&0.125&	0.247&	0.452 &       0.526&	0.568&	0.633&	0.669&	0.737&	0.765&	0.751&	0.800\\
20&0.206&	0.343&	0.493&	0.615&	0.673&	0.752&	0.788&	0.813&	0.860&	0.864&	0.876\\
30&0.111&	0.404&	0.535&	0.684&	0.757&	0.756&	0.855&	0.875&	0.882&	0.909&	0.953\\
40&0.308&	0.393&	0.605&	0.698&	0.786&	0.820&	0.878&	0.898&	0.944&	0.959&	0.953\\
50&0.207&	0.450&	0.603&	0.718&	0.815&	0.889&	0.923&	0.938&	0.966&	0.973&	0.980\\
60&0.268&	0.430&	0.594&	0.780&	0.826&	0.918&	0.913&	0.926&	0.974&	0.976&	0.984\\
70&0.144&	0.434&	0.649&	0.798&	0.888&	0.883&	0.944&	0.968&	0.971&	0.982&	0.996\\
80&0.171&	0.454&	0.678&	0.796&	0.872&	0.921&	0.938&	0.967&	0.989&	0.992&	0.995\\
90&0.204&	0.431&	0.683&	0.834&	0.874&	0.916&	0.963&	0.985&	0.985&	0.994&	0.994\\
100&0.291&	0.398&	0.687&	0.836&	0.884&	0.931&	0.973&	0.987&	0.992&	0.994&	1\\
\hline
\end{tabular}}
\\ \end{center}\medskip
\end{table}

\begin{table}[h]
{\small \caption{\label{tb5}Empirical powers of the proposed test at $0.05$ significance level for nonlinear MA model.}}
\begin{center}
{\tiny
\begin{tabular}{ccccccccccccc}
\hline
&\multicolumn{11}{c}{$p$}  \\
\hline
n  &   5  & 10   & 20   & 30   &  40  & 50   & 60   & 70 & 80 & 90 & 100 \\
\hline
5&0.033&	0.023&	0.004&	0.008&	0.005&	0.007&	0.008&	0.007&	0.009&	0.014&	0.070\\
10&0.618&	0.543&	0.374&	0.220&	0.108&	0.037&	0.011&	0&	0&	0&	0\\
20&0.804&	0.744&	0.703&	0.614&	0.581&	0.511&	0.447&	0.340&	0.306&	0.257&	0.216\\
30&0.854&	0.815&	0.777&	0.779&	0.780&	0.740&	0.662&	0.662&	0.597&	0.579&	0.555\\
40&0.878&	0.842&	0.856&	0.856&	0.845&	0.825&	0.779&	0.770&	0.772&	0.702&	0.698\\
50&0.884&	0.899&  0.868&	0.884&	0.864&	0.888&	0.860&	0.875&	0.869&	0.828&	0.820\\
60&0.892&	0.912&	0.882&	0.904&	0.920&	0.923&	0.933&	0.900&	0.892&	0.892&	0.882\\
70&0.896&	0.934&	0.906&	0.927&	0.934&	0.921&	0.952&	0.925&	0.943&	0.917&	0.926\\
80&0.936&	0.886&	0.925&	0.921&	0.952&	0.950&	0.943&	0.943&	0.958&	0.953&	0.936\\
90&0.922&	0.925&	0.939&	0.935&	0.958&	0.959&	0.955&	0.982&	0.962&	0.957&	0.954\\
100&0.926&	0.935&	0.920&	0.937&	0.952&	0.964&	0.970&	0.975&	0.978&	0.970&	0.965\\
\hline
\end{tabular}}
\\ \end{center}\medskip
\end{table}

\begin{table}[h]
{\small \caption{\label{tb6}Empirical powers of the proposed test at $0.05$ significance level for ARCH(1) model.}}
\begin{center}
{\tiny
\begin{tabular}{cccccccccccc}
\hline
&\multicolumn{11}{c}{$p$}  \\
\hline
n  &   5  & 10   & 20   & 30   &  40  & 50   & 60   & 70 & 80 & 90 & 100 \\
\hline
5  & 0.148 & 0.093 &0.073 & 0.076 & 0.058 &	0.099 &	0.113 &	0.088 &	0.106 &	0.112 &	0.141\\
10 & 0.454 & 0.399 &0.315 &	0.215 &	0.128 &	0.121 &	0.112 &	0.121 &	0.093 &	0.095 &	0.092\\
20 & 0.401 & 0.428 &0.508 &	0.464 &	0.444 &	0.468 &	0.436 &	0.338 &	0.356 &	0.337 &	0.302\\
30 & 0.272 & 0.364 &0.616 &	0.712 & 0.753 &	0.759 &	0.793 &	0.743 &	0.638 &	0.639 &	0.598\\
40 & 0.232 & 0.357 &0.572 &	0.823 &	0.790 &	0.874 &	0.915 &	0.885 &	0.876 &	0.855 &	0.860\\
50 & 0.222 & 0.339 &0.622 &	0.757 &	0.891 &	0.969 &	0.957 &	0.975 &	0.967 &	0.957 &	0.975\\
60 & 0.216 & 0.448 &0.617 &	0.862 &	0.901 &	0.976 &	0.983 &	0.987 &	0.992 &	0.997 &	0.997\\
70 & 0.200 & 0.339 &0.592 &	0.864 &	0.931 &	0.982 &	0.993 &	0.998 &	0.997 &	0.998 &	0.996\\
80 & 0.194 & 0.376 &0.566 &	0.824 &	0.950 &	0.967 &	0.997 &	1	  &  1	  &  0.999&	1\\
90 & 0.184 & 0.456 &0.721 &	0.839 &	0.960 &	0.995 &	0.999 &	1	  & 0.999 &	1	  &  1\\
100& 0.128 & 0.306 &0.802 &	0.859 &	0.934 &	0.992 &	1	  &  1	  &  1	  &  1	  &  1\\
\hline
\end{tabular}}
\\ \end{center}\medskip
\end{table}

\begin{table}[h]
{\small \caption{\label{tb7}Empirical powers of the proposed test at $0.05$ significance level for Vandermonde Matrix.}}
\begin{center}
{\tiny
\begin{tabular}{ccccccccccccc}
\hline
&\multicolumn{11}{c}{$p$}  \\
\hline
n  &   10   & 20   & 30   &  40  & 50   & 60   & 70 & 80 & 90 & 100 &120\\
\hline
10&	0.180&	0.197&	0.192&	0.169&	0.202&	0.211&	0.190&	0.185&	0.171&	0.177&0.240\\
20&	0.309&	0.332&	0.356&	0.327&	0.291&	0.303&	0.301&	0.295&	0.321&	0.243&0.478\\
30&	0.324&	0.433&	0.473&	0.413&	0.461&	0.408&	0.445&	0.395&	0.368&	0.397&0.606\\
40&	0.458&	0.512&	0.527&	0.546&	0.533&	0.490&	0.518&	0.498&	0.450&	0.457&0.655\\
50&	0.593&	0.437&	0.540&	0.571&	0.614&	0.569&	0.577&	0.566&	0.565&	0.537&0.764\\
60&	0.504&	0.538&	0.551&	0.567&	0.616&	0.662&	0.588&	0.581&	0.572&	0.607&0.744\\
70&	0.548&	0.526&	0.560&      0.627&	0.668&	0.641&	0.694&      0.707&	0.641&	0.678&0.741\\
80&	0.550&	0.545&	0.580&	0.633&	0.712&	0.719&	0.693&	0.768&	0.729&	0.749&0.805\\
90&	0.589&	0.544&	0.596&	0.667&	0.695&	0.712&	0.743&	0.754&	0.738&	0.728&0.807\\
100&	0.464&	0.549&	0.610&	0.645&	0.704&	0.757&      0.772&         0.751&        0.752 &        0.808&0.928\\
120&	0.633&	0.660&	0.736&	0.737&	0.759&	0.855&	0.854&	0.909&	0.960&	0.999&1\\
\hline
\end{tabular}}
\\ \end{center}\medskip
\end{table}


\begin{thebibliography}{100}
\bibitem[Anderson(1984)]{Ader1984}
T. W. Anderson, An introduction to multivariate statistical analysis, \emph{New York: Wiley}, \textbf{2th edtion}(1984).

\bibitem[Bai, Miao and Pan(2007)]{BMP2007}
Z. D. Bai, B. Q. Miao, G. M. Pan, On asymptotics of eigenvectors of large sample covariance matrix, \emph{Ann. Probab.}, \textbf{35(4)}(2007), 1532-1572.


\bibitem[Bai and Saranadasa(1996)]{BS1996}
Z. D. Bai, H. Saranadasa, Effect of high dimension comparison of significance tests for a high dimensional two sample problem, \emph{Statist. Sinica}, \textbf{6}(1996), 311-329.

\bibitem[Bai and Silverstein(2004)]{BS2004}
Z. D. Bai, J.W. Silverstein, CLT for linear spectral statistics of large-dimensional sample covariance matrices, \emph{Ann. Probab.}, \textbf{32(1A)}(2004), 553-605.

\bibitem[Bai and Silverstein(2009)]{BS2009}
Z. D. Bai, J.W. Silverstein, Spectral analysis of large dimensional random matrices, \emph{Sringer}, \textbf{second edition}(2009).


\bibitem[Bai and Zhou(2008)]{BZ2008}
Z. D. Bai, W. Zhou, Large sample covariance matrices without independence structures in colums, \emph{Statist. Sinica.}, \textbf{18}(2008), 425-442.

\bibitem[Billingsley(1968)]{Bill1968}
P. Billingsley, Convergence of Probability Measures, Wiley, New York  \textbf{first edition} (1968).

\bibitem[Billingsley(1999)]{Bill1999}
P. Billingsley, Convergence of probability measures, \emph{Wiley-Interscience Publication}, \textbf{second edition}(1999).

\bibitem[Epps(1983)]{EP1983}
T. W. Epps, A test for normality based on the empirical characteristic function, \emph{Biometrika}, \textbf{70(3)}(1983), 723-726.

\bibitem[Chen, Gao and Li(2009)]{CGL2011}
J. Chen, J. Gao, D. Li, A new diagnostic test for cross--sectional uncorrelatedness in nonparametric panel data models, forthcoming in \emph{Econometric Theory}, \textbf{}(2011).

\bibitem[Donoho(2000)]{Do2000}
D. L. Donoho, High-dimensional data analysis: The curses and blessings of dimensionality, \emph{American Math. Society Lecture}, \textbf{Math. Challenges of the 21st Century}, \textbf{}(2000).

\bibitem[Dozier and Silversten(2007)]{DS2007}
R. B. Dozier, J.W. Silverstein, On the empirical distribution of eigenvalues of large dimensional information-plus-noise type matrices, \emph{J. Multivariate Anal.}, \textbf{98(4)}(2007), 678-694.

\bibitem[Fan and Lv(2010)]{Fan2010}
J. Fan, J. Lv, A selective overview of variable selection in high dimensional feature space, \emph{Statist. Sinica}, \textbf{20}(2010), 101-148.

\bibitem[Fan and Li(2001)]{Fan2001}
J. Fan, R. Li, Variable selection via nonconcave penalized likelihood and its oracle properties, \emph{J. Am. Statist. Ass.}, \textbf{96(456)}(2001),  1348-1360.

\bibitem[Fan, Lv and Qi(2011)]{Fan2011}
J. Fan, J. Lv, L. Qi, Sparse high dimensional models in economics, \emph{Available at SSRN: http://ssrn.com/abstract=1659322}, \textbf{}(2011).

\bibitem[Guo and Nicolus(2006)]{GN2006}
C. H. Guo, N.J. Higham, A Schur-Newton method for the matrix $p$-th root and its inverse, \emph{SIAM Journal on Matrix Analysis and Applications}, \textbf{28(3)}(2006).

\bibitem[Hong(1998)]{Hong1998}
Y. M. Hong, Testing for pairwise serial independence via the empirical distribution function, \emph{J. R. Statist. Soc. B}, \textbf{60(2)}(1998), 429-453.

\bibitem[Hong(1999)]{Hong1999}
Y. M. Hong, Hypothesis testing in time series via the empirical characteristic function: a generalized spectral density approach, \emph{J. Am. Statist. Ass.}, \textbf{94(448)}(1999), 1201-1220.

\bibitem[Hsiao(2003)]{Hsiao2003}
C. Hsiao, Analysis of panel data, \emph{Cambridge University Press}, \textbf{2nd edition}(2003).

\bibitem[Jiang(2004)]{Jiang2004}
T. F. Jiang, The limiting distributions of eigenvalues of sample correlation matrices, \emph{Sankhya.}, \textbf{66(1)}(2004), 35-48.

\bibitem[Johnstone(2001)]{Johnstone2001}
I. Johnstone, on the distribution of the largest principal component, \emph{Ann. Statist.}, \textbf{29}(2001), 295-327.

\bibitem[Johnstone and Titterington(2009)]{Johnstone2009}
I. Johnstone, D.M. Titterington, Statistical challenges of high-dimensional data, \emph{Phil. Trans. R. Soc. A}, \textbf{367}(2009), 4237-4253.

\bibitem[Hsiao, Pesaran and Pick(2007)]{HPP2007}
C. Hsiao, M. H. Pesaran, A. Pick, Diagnostic
tests of cross section independence for nonlinear panel data
models, \emph{IZA discussion paper}, \textbf{No.27}(2007).

\bibitem[Kuan and Lee(2004)]{KuanLee}
C. M. Kuan and W.M. Lee, A new test for the martingale difference hypothesis, \emph{Studies in Nonlinear Dynamics and Econometrics}, \textbf{8(4)}(2004), 1-26.

\bibitem[Marcenko and Pastur(1967)]{MP1967}
V. A. Marcenko,  L.A. Pastur, Distribution for some sets of random
matrices, \emph{Math. USSR-Sb.}, \textbf{1}(1967), 45783.

\bibitem[Huang, Horowitz and Ma(2008)]{Huang2008}
J. Huang, J.L. Horowitz, S. Ma, Asymptotic properties of bridge estimators in sparse high-dimensional regression models, \emph{Ann. Statist.}, \textbf{36(2)}(2008), 587-613.

\bibitem[Pan and Zhou(2008)]{PZ2008}
G. M. Pan, W. Zhou, Central limit theorem for signal-to-interference ratio of reduced rank linear receiver, \emph{Ann. Appl. Probab.}, \textbf{18(3)}(2008), 1232-1270.

\bibitem[Pan and Zhou(2011)]{PZ2011}
G. M. Pan, W. Zhou, Central limit theorem for Hotelling's $T^2$ statistic under large dimension, \emph{Ann. Appl. Probab.}, \textbf{21(5)}(2011), 1860--1910.

\bibitem[Pan(2011)]{P2011}
G. M. Pan, Comparison between two types of large sample covariance matrices, \emph{submitted}, \textbf{}(2011).

\bibitem[Pesaran(2004)]{pesaran2004}
M. H. Pesaran, General diagnostic tests for cross
section dependence in panels, \emph{Cambridge Working Paper in
Economics}, \textbf{No.0435}(2004).

\bibitem[Ryan and Debban(2009)]{RD2009}
$\emptyset$. Ryan, M. Debbah, Asymptotic behaviour of random
vandermonde matrices with entries on the unit circle, \emph{IEEE Trans. on Information Theory}, \textbf{55(7)}(2009), 3115-3148.

\bibitem[Silverstein(1995)]{S1995}
J. W. Silverstein, Strong convergence of the empirical distribution of eigenvalues of large dimensional random matrices,
\emph{J. Multivariate Anal.} \emph{55}(1995), 331--339.

\bibitem[Su and Ullah(2009)]{SU2009}
L. Su and A. Ullah, Testing conditional uncorrelatednesss. {\em Journal of Business and Economic Statistics} {\bf 27} (2009), 18--29.

\bibitem[Xiao and Zhou(2010)]{XZ2010}
H. Xiao, W. Zhou, On the limit of the smallest eigenvalue of some sample covariance matrix, \emph{J. Theor. Probab.}, \textbf{23}(2010), 1-20.


\end{thebibliography}
\end{document}